\documentclass[reqno]{amsart}

\usepackage{amsmath}
\usepackage{amssymb}
\usepackage{amsthm} 
\usepackage{graphicx}
\usepackage{caption}
\usepackage{cite}
\usepackage{enumerate}
\usepackage{mathtools}

\newtheorem{Thm}{Theorem}[section]
\newtheorem{Prop}[Thm]{Proposition}
\newtheorem{Lemma}[Thm]{Lemma}
\newtheorem{Cor}[Thm]{Corollary}

\theoremstyle{definition}

\newtheorem{Rem}[Thm]{Remark}

\newtheorem{Coex}[Thm]{Counterexample}

\numberwithin{equation}{section}

\def\bkappa{\boldsymbol\kappa}
\def\btau{\boldsymbol{\tau}}

\def\bw{\mathbf{w}}
\def\bxi{\mathbf{\xi}}
\def\cI{\mathcal{I}}

\def\cS{\mathcal{S}}
\def\Do{\cI}
\def\e{\mathbf{e}}
\def\ints{\int_{\Si}}
\def\intsp{\int_{\Sp}}
\def\intst{\int_{\St}}
\def\N{\mathbb{N}}
\def\nnu{\boldsymbol{\nu}}
\def\pa{\partial}
\def\R{\mathbb{R}}
\def\Si{\Sigma}
\def\Sp{\Sph^1}
\def\Sph{\mathbb{S}}
\def\St{\Si_t}
\def\vare{\varepsilon}
\def\Z{\mathbb{Z}}

\DeclareMathOperator{\cir}{circ}
\DeclareMathOperator{\inner}{in}

\newcommand{\fracd}[2]{{\frac{d #1}{d #2}}}
\newcommand{\fracp}[2]{{\frac{\pa #1}{\pa #2}}}

\DeclareRobustCommand{\SkipTocEntry}[4]{}

\begin{document}

\title{Curve flows with a global forcing term}
\author{Friederike Dittberner}
\address{Department of Mathematics and Statistics, University of Konstanz, Germany}
\email{dittberner@math.fu-berlin.de}

\begin{abstract}
We consider embedded, smooth curves in the plane which are either closed or asymptotic to two lines. 
We study their behaviour under curve shortening flow with a global forcing term.
We prove an analogue to Huisken's distance comparison principle for curve shortening flow for initial curves whose local total curvature does not lie below $-\pi$ and show that this condition is sharp.
With that, we can exclude singularities in finite time for bounded forcing terms.
For immortal flows of closed curves whose forcing terms provide non-vanishing enclosed area and bounded length, we show convexity in finite time and smooth and exponential convergence to a circle.
In particular, all of the above holds for the area preserving curve shortening flow.
\end{abstract}

\maketitle
\tableofcontents

\section{Introduction}

Let $\Si_0\subset\R^2$ be an embedded, smooth curve, parametrised by the embedding $X_0:\Do\to\R^2$, where $\Do\in\{\Sp,\R\}$.
We seek a one-parameter family of maps $X:\Do\times[0,T)\to\R^2$ with $X(\,\cdot\,,0)=X_0$ satisfying the evolution equation
\begin{align}\label{eq:ccf}
\fracp{X}{t}(p,t)=\big(h(t)-\kappa(p,t)\big)\nnu(p,t)
\end{align}
for $(p,t)\in\Do\times(0,T)$, where the vector $\nnu$ is the outward pointing unit normal to the curve $\St:=X(\Do,t)$, $\kappa$ is the curvature function and $T$ is th maximal time of existence.
The global term $h$ is smooth and smoothly bounded whenever the curvature is bounded.
For the curve shortening flow (CSF), $h\equiv0$.
For closed curves, the enclosed area preserving curve shortening flow (APCSF) has the global term
\begin{align}\label{eq:h_ap}
h(t)=\frac{2\pi}{L_t}\,,
\end{align}
where $L_t=L(\St)$ is the length of the curve. 
The length preserving curve flow (LPCF) has the global term
\begin{align}\label{eq:h_lp}
h(t)=\frac1{2\pi}\intst\kappa^2\,ds_t\,.
\end{align}
The total curvature of a curve $\St=X(\Do,t)$ is given by
\begin{align}\label{eq:def_alpha}
\alpha(t):=\intst\kappa\,ds_t\,,
\end{align}
where $\alpha=2\pi$ if the curve $\Si=X(\Sp)$ is embedded, closed and positively oriented.
For $\Do=\R$, we assume that $\Si_t=X_0(\R,t)$ is, up to translation, smoothly asymptotic to two distinct time-independent lines for $p\to-\infty$ and $p\to\infty$, where we also assume that
\begin{align}\label{eq:alpha}
\alpha(t)\equiv\alpha_0\in(-\pi,\pi)
\end{align}
as well as
\begin{align}\label{eq:intabskappa}
\int_{\Si_0}|\kappa|\,ds<\infty\,.
\end{align}
Note that~\eqref{eq:alpha} follows from the evolution equation of $\alpha(t)$, see Lemma~\ref{lem:dsvartheta}, and the asymptotic behaviour of the curve. \\

The APCSF was first studied by Gage~\cite{Gage86}. 
He proved that initially embedded, closed, convex curves stay embedded, smooth and convex, and converge smoothly to a circle of radius $\sqrt{A_0/\pi}$, where $A_0=A(\Si_0)$ is the enclosed area of the initial curve.
In~\cite{Maeder15}, Maeder-Baumdicker studied APCSF for convex curves with Neumann boundary on a convex support curve and showed smooth convergence to an arc for sufficiently short, convex, embedded initial curves. 
She proved a monotonicity formula and excluded type-I singularities for embedded, convex curves under the APCSF.
For the LPCF, Pihan~\cite{Pihan98} showed that initially embedded, closed, convex curves stay embedded, smooth and convex, and converge smoothly and exponentially to a circle of radius $L_0/2\pi$. \\

In this paper, we will adapt theory from CSF.
For CSF in the plane, Gage--Hamilton and Grayson~\cite{GageHamilton86,Grayson87} showed that all embedded, closed initial curves stay embedded until they smoothly and exponentially shrink to a round point. 
In~\cite{Huisken95}, Huisken gave a different proof for this result by bounding the ratio of the extrinsic distance 
$$d(p,q,t):=\Vert X(q,t)-X(p,t)\Vert_{\R^2}$$
and the intrinsic distance
$$l(p,q,t):=\int_p^qds_t$$
for curves $\St=X(\R,t)$ with asymptotic ends, respectively the extrinsic distance and the function
$$\psi(p,q,t):=\frac{L_t}\pi\sin\!\left(\frac{\pi\,l(p,q,t)}{L_t}\right)$$
for curves $\St=X(\Sp,t)$, below away from zero, and by applying singularity theory for CSF.
In~\cite{AndrewsBryan11}, Andrews and Bryan found an explicit function to proof curvature bounds via the distance comparison principle.
To analyse curvature blow-ups, one distinguishes between type-I and type-II singularities and rescales the curve near a point of highest curvature.
Using his famous monotonicity formula in~\cite{Huisken90}, Huisken showed that if an immersed curve develops a type-I singularity under CSF, the curves $\St$ have to be asymptotic to a homothetically shrinking solution around the singular point. 
Abresch and Langer~\cite{AbreschLanger86} had previously classified all embedded, homothetically shrinking solutions of CSF as circles.
One concludes, in case of a type-I singularity, that the curves shrink to a round point.
For the type-II singularities, Hamilton~\cite{Hamilton89} and Altschuler~\cite{Altschuler91} showed that each rescaling sequence converges to a translating solution. 
For curves in the plane, the only solution of this kind is the so-called grim reaper which is, for all $\tau\in\R$, given by the graph of the function $u(\sigma,\tau)=\tau-\log\cos(\sigma)$, where $\sigma\in(-\pi/2,\pi/2)$. 
On the grim reaper $\inf(d/l)=0$, so that type-II singularities can be excluded. 
Since $T<\infty$ and a singularity has to form, it has to be of type~I. \\

This paper is structured as follows.
In Section~\ref{sec:evo}, we state evolution equations for the geometric quantities under~\eqref{eq:ccf} and draw first conclusions. 
In Section~\ref{sec:theta2}, we consider angles of tangent vectors and derive a strong maximum principle for the local total curvature
\begin{align}\label{eq:intkappa}
\theta(p,q,t):=\int_p^q\kappa\,ds_t\,.
\end{align}
In the subsequent sections, we study the flow~\eqref{eq:ccf} for embedded, positively oriented, smooth initial curves $\Si_0=X_0(\Do)$ with
\begin{align}\label{eq:intkappageqminuspi}
\theta_0(p,q)=\int_p^q\kappa\,ds\geq-\pi
\end{align}
for all $p,q\in\Do$. 
Note that for convex curves $\theta_0\geq0$.
Figure~\ref{fig:example} is an example for condition~\eqref{eq:intkappageqminuspi}, where all the angles lay between $-\pi$ and $3\pi$, e.\,g. $\theta(p,q)=-\pi$, $\theta(q,p)=3\pi$, $\theta(q,r)=2\pi$, $\theta(r,q)=0$, $\theta(r,p)=\pi$.
\begin{figure}
\centering
\begin{minipage}{.35\textwidth}
	\centering
	\includegraphics[width=\textwidth]{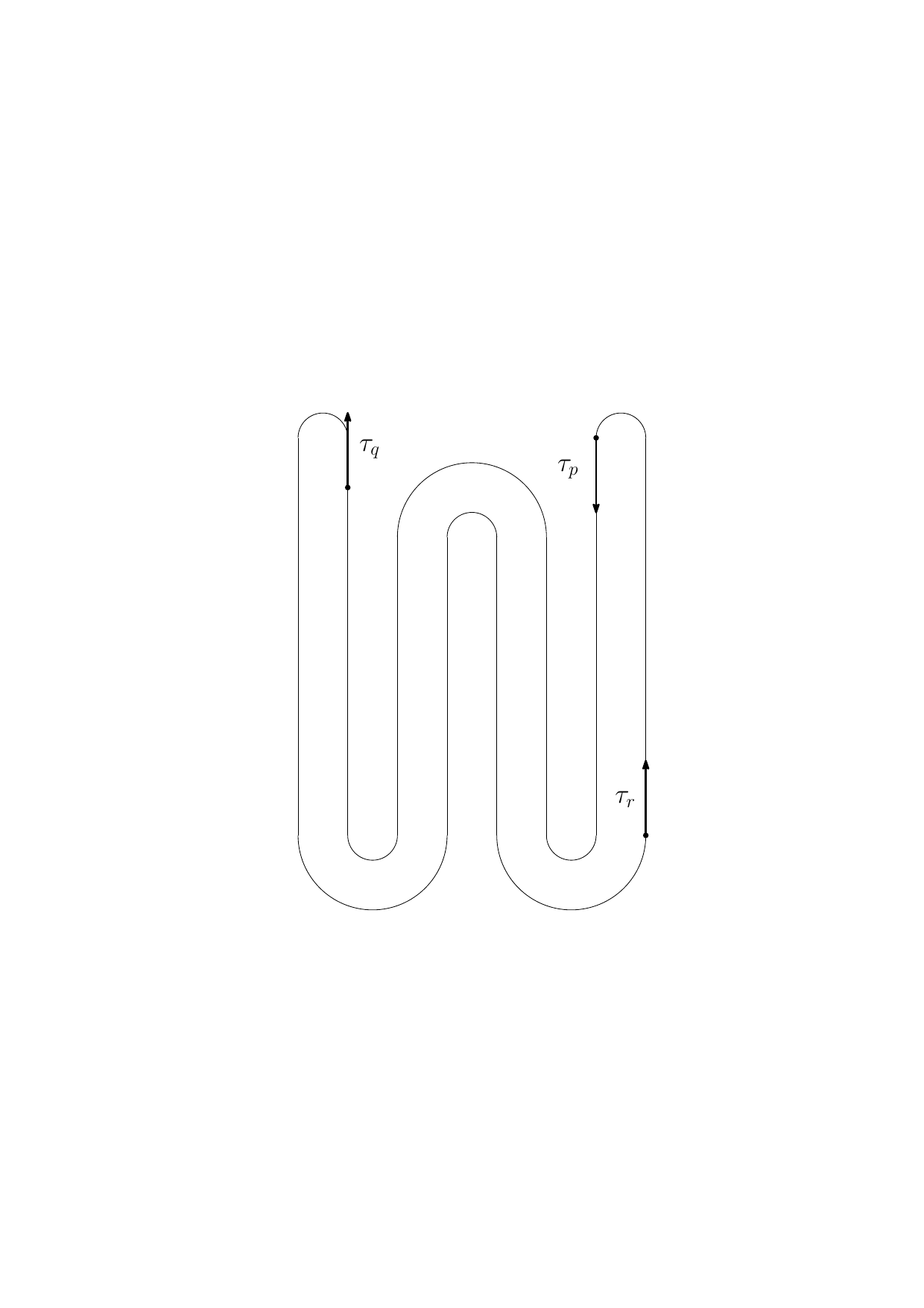}
	\captionsetup{width=.9\textwidth}
	\caption{Example for condition~\eqref{eq:intkappageqminuspi}}
	\label{fig:example}
\end{minipage}
\hfill
\begin{minipage}{.60\textwidth}
	\centering
	\includegraphics[width=\textwidth]{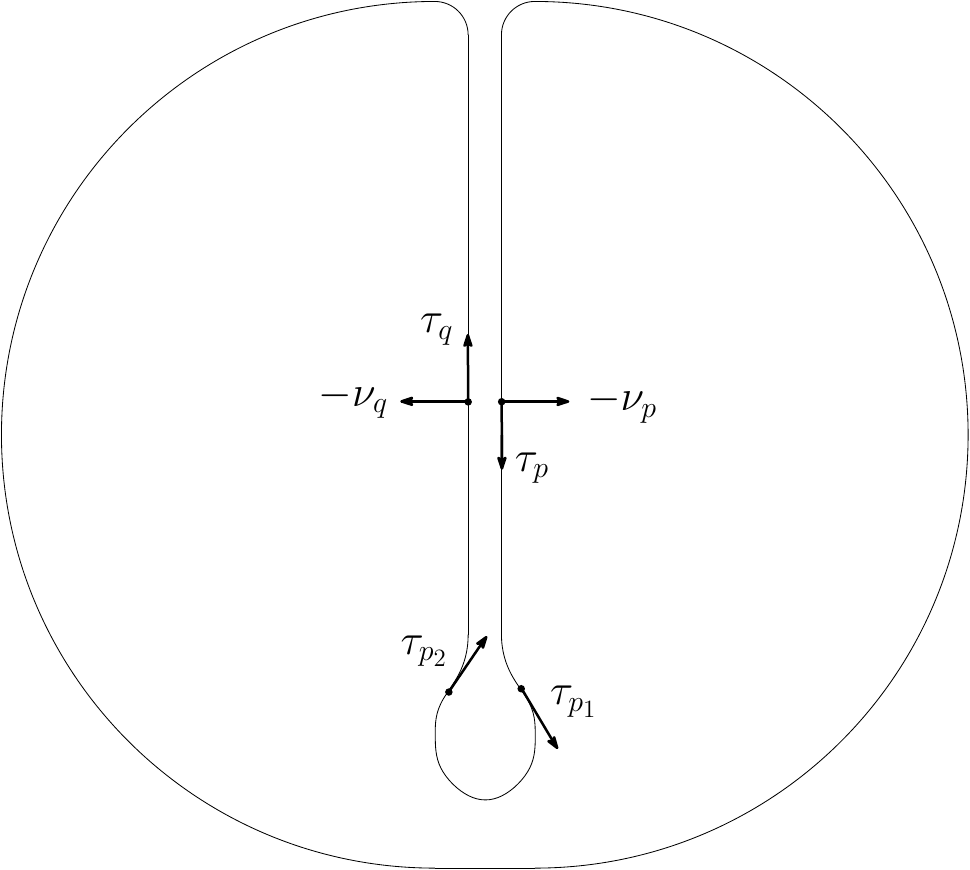}
	\caption{Counterexample~\ref{ex:cexample}}
	\label{fig:cexample}
\end{minipage}
\end{figure}\\

In Sections~\ref{sec:noncompact} and~\ref{sec:noncollapsing}, we modify the distance comparison principles from~\cite{Huisken95} and prove that, for
\begin{align}\label{eq:h_1}
h(t)\in
\begin{dcases}
[0,\infty)&\qquad\text{ for }\Do=\R \\
\left[0,\frac1{2\pi}\intst\kappa^2\,ds_t+\frac{2\pi}{L_t}\right]&
\qquad\text{ for }\Do=\Sp
\end{dcases}
\end{align}
and if the initial embedding $\Si_0$ satisfies~\eqref{eq:intkappageqminuspi}, the ratio $d/l$ for $\Do=\R$ and $d/\psi$ for $\Do=\Sp$ is bounded from below away from zero uniformly in time. 
We conclude that the curves $\St$ stay embedded for all $t\in[0,T)$.
We also show that the condition~\eqref{eq:intkappageqminuspi} is sharp, that is, one can construct initial curves which violate~\eqref{eq:intkappageqminuspi} arbitrarily mildly and for which the resulting flow self-intersects in finite time.
An example is the initial curve in Figure~\ref{fig:cexample} with length sufficiently large compared to the $C^{3,\alpha}$-norm of its embedding and for which $\min_{\Do\times\Do}\theta_0<-\pi$, e.\,g. $\theta(p_1,p_2)<-\pi$. \\

In Section~\ref{sec:singana}, we assume that $T<\infty$ and there exist constants $0<c,C<\infty$ so that $h$ satisfies~\eqref{eq:h_1} and additionally
\begin{align}\label{eq:h_2}
L_t\geq c
\qquad\text{ and }\qquad
0\leq h(t)\leq C
\end{align}
for $t\in[0,T)$ and study curvature blow ups via parabolic rescaling.
We use the distance comparison principles from Sections~\ref{sec:noncompact} and~\ref{sec:noncollapsing} in the same fashion as for CSF in~\cite{Huisken95} to exclude type-II singularities and conclude that the flow exists for all positive times. \\

In Section~\ref{sec:convexity}, we assume that a solution is immortal, that is, it exists for all positive times and the global term satisfies the following.
Let $\delta\in(0,\infty)$ be given so that $\delta A_0$ is the desired limit area and
\begin{align}\label{eq:h_3a}
h(t)=(1-\gamma)\frac{2\pi}{L_t}+\gamma\frac1{2\pi}\intst\kappa^2\,ds_t\,,
\end{align}
where $\gamma=(\delta-1)A_0/\big(L_0^2/4\pi-A_0\big)$.
We prove that the curves become convex in finite time. 
The global term above ensures that the enclosed area is bounded away from zero and the length is bounded away from infinity throughout the flow. 
In Section~\ref{sec:longtimebehaviour}, we assume that an immortal solution of~\eqref{eq:ccf} with $h$ satisfying~\eqref{eq:h_3a} is convex.
We expand Gage's and Pihan's results and show smooth and exponential convergence to a round circle. \\

Note that the global term~\eqref{eq:h_ap} of the APCSF satisfies conditions~\eqref{eq:h_1},~\eqref{eq:h_2} and~\eqref{eq:h_3a}. 
The global term~\eqref{eq:h_lp} of the LPCF satisfies a priori~\eqref{eq:h_1} and~\eqref{eq:h_3a}. \\

This paper extends results from the author's PhD thesis~\cite{Dittberner18}.\\

\textbf{Acknowledgements.}
The author wants to thank Klaus Ecker, Theodora Bourni, Julian Scheuer, Mat Langford and Stephen Lynch for interest in the work and helpful discussions.

\section{Evolution equations and first consequences}\label{sec:evo}

Let $\Do\in\{\Sp\!,\R\}$ and $X:\Do\to\R^2$ be a smooth, embedded curve with length element $v:\Do\to\R$ by $v(p):=\big\Vert\fracd{}{p}X(p)\big\Vert$.
For a fixed point $p_0\in\Do$, the arc length parameter $s:\Do\to[0,L]$ is given by $s(p):=\int_{p_0}^pv(r)\,dr$, so that $ds=vdp$ and $\fracd{}{s}=\frac1{v}\fracd{}{p}$.
For $\Si=X(\Sp)$, the arc length parameter is given by $s:\Sp\to\Sp_{L/2\pi}$ and $\tilde X:=X\circ s^{-1}:\Sp_{L/2\pi}\to\R^2$ parametrises $\Si$ by arc length.
For $\Si=X(\R)$, the arc length parameter is given by $s:\R\to\R$.
The unit tangent vector field $\btau$ to $\Si$ in direction of the arc length parametrisation is given by $\btau:=\fracd{}{s}\tilde X$.
The outward unit normal is given by $\nnu:=(\btau_2,-\btau_1)$.
We define the curvature by
$$\kappa:=-\left\langle\fracd{\btau}{s},\nnu\right\rangle=\left\langle\btau,\fracd{\nnu}{s}\right\rangle$$
and the curvature vector by $\bkappa:=-\kappa\nnu$.
The Frenet--Serret equations read as
$$\fracd{\btau}{s}=-\kappa\nnu\qquad\text{ and }\qquad
\fracd{\nnu}{s}=\kappa\btau\,.$$
Let $X:\Do\!\times[0,T)\to\R^2$ be a one parameter family of maps.
For fixed $t\in[0,T)$, we can parametrise $\St=X(\Do,t)$ by arc length via the arc length parameter $s(\,\cdot\,,t)$, where $s(\Do,t)\in\{\Sp_{L_t/2\pi},\R\}$ and the arc length parametrisation is given by $\tilde X(\,\cdot\,,t)=X(\,\cdot\,,t)\circ s^{-1}(\,\cdot\,,t):s(\Do,t)\to\R^2$.
The evolution equation~\eqref{eq:ccf} applied to the arc length parametrisation reads
$$\fracp{\tilde X}{t}(s,t)=\fracp{^2\tilde X}{s^2}(s,t)+h(t)\tilde\nnu(s,t)$$
for $s\in s(\Do,t)$, where $\tilde\nnu(s,t)=\tilde\nnu(s(p,t),t)=\nnu(p,t)$ and we used the identity $\Delta_{\Si}\tilde X=\fracd2{s^2}\tilde X=\bkappa$ for the curvature vector.
Whenever we will calculate via the arc length parametrisation, we will do so at a fixed time.
Since the images $X(\Do,t)=\tilde X(s(\Do,t),t)$ are the same and $X$ and $\tilde X$ only differ by a tangential diffeomorphism, we will omit the ``$\sim$'' in the following above geometric quantities related to $\tilde X$ if these depend on $s$ rather than $p$.

\begin{Lemma}[Gage~{\cite{Gage86}}]\label{lem:evolutionequations}
Let $X:\Do\times(0,T)\to\R^2$ be a solution of~\eqref{eq:ccf}.
Then, for $t\in(0,T)$,
\begin{alignat*}{2}
\fracp{v}{t}&=\kappa(h-\kappa)v\,,\qquad
\fracp{}{t}\fracp{}{s}&&=\fracp{}{s}\fracp{}{t}-\kappa(h-\kappa)\fracp{}{s}\,, \\
\fracp{\btau}{t}&=-\fracp{\kappa}{s}\nnu\,,\qquad\qquad
\fracp{\nnu}{t}&&=\fracp{\kappa}{s}\btau\,,\qquad\qquad
\fracp{\kappa}{t}=\fracp{^2\kappa}{s^2}-(h-\kappa)\kappa^2\,.
\end{alignat*}
\end{Lemma}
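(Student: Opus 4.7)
The derivations are standard and all follow from the flow equation $\partial_t X=(h-\kappa)\nnu$ together with the Frenet--Serret formulas. I will prove the equations in the order listed, since each one uses its predecessor.

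First I would compute $\partial_t v$ directly from $v^2=\langle\partial_p X,\partial_p X\rangle$. Using $\partial_p=v\,\partial_s$, I have $\partial_p X=v\btau$ and
\[
\partial_p\partial_t X=v\,\partial_s\!\bigl((h-\kappa)\nnu\bigr)=v\bigl(-\partial_s\kappa\,\nnu+(h-\kappa)\kappa\btau\bigr),
\]
so $\partial_t v^2=2\langle v\btau,\partial_p\partial_t X\rangle=2v^2\kappa(h-\kappa)$, which gives the first identity. The commutator $\partial_t\partial_s=\partial_s\partial_t-\kappa(h-\kappa)\partial_s$ is then immediate from $\partial_s=v^{-1}\partial_p$ and $\partial_t v^{-1}=-v^{-1}\kappa(h-\kappa)$ (noting that $\partial_t$ and $\partial_p$ commute because $p$ is time-independent).

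Next, I would write $\btau=\partial_s X$ and apply the commutator:
\[
\partial_t\btau=\partial_s\partial_t X-\kappa(h-\kappa)\partial_s X=\partial_s\!\bigl((h-\kappa)\nnu\bigr)-\kappa(h-\kappa)\btau,
\]
and expanding $\partial_s\nnu=\kappa\btau$ produces the tangential terms $\kappa(h-\kappa)\btau$, which cancel, leaving $\partial_t\btau=-\partial_s\kappa\,\nnu$. Since $\nnu$ is a unit vector orthogonal to $\btau$, we have $\partial_t\nnu=a\btau$ for some scalar $a$, and differentiating $\langle\btau,\nnu\rangle=0$ yields $a=-\langle\partial_t\btau,\nnu\rangle=\partial_s\kappa$.

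Finally, I would compute $\partial_t\kappa$ from $\kappa=\langle\btau,\partial_s\nnu\rangle$. Using the product rule, the commutator, and the formulas just derived,
\[
\partial_t\kappa=\langle\partial_t\btau,\partial_s\nnu\rangle+\langle\btau,\partial_s\partial_t\nnu\rangle-\kappa(h-\kappa)\langle\btau,\partial_s\nnu\rangle.
\]
The first term is $\langle-\partial_s\kappa\,\nnu,\kappa\btau\rangle=0$, the second expands via $\partial_t\nnu=\partial_s\kappa\,\btau$ to $\partial_s^2\kappa+\partial_s\kappa\langle\btau,\partial_s\btau\rangle=\partial_s^2\kappa$ (since $\partial_s\btau\perp\btau$), and the last term equals $-\kappa^2(h-\kappa)$. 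Combining yields $\partial_t\kappa=\partial_s^2\kappa-(h-\kappa)\kappa^2$.

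There is no real obstacle here: the whole lemma is a bookkeeping exercise that mirrors the classical CSF computation, with the only new feature being that the normal component of the velocity is $(h-\kappa)$ rather than $-\kappa$, which introduces the extra $h$-terms that eventually cancel in the tangent equation and survive only in the $v$- and $\kappa$-equations.
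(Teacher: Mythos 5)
Your computations are correct and constitute the standard derivation (the paper itself gives no proof, deferring to the cited reference of Gage, which proceeds in exactly this way: first the length element, then the commutator, then tangent, normal and curvature in turn, using the Frenet--Serret relations $\partial_s\btau=-\kappa\nnu$, $\partial_s\nnu=\kappa\btau$ consistent with the paper's sign conventions). All five identities check out, so nothing needs to be added.
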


\begin{Cor}[Huisken~{\cite[Thm.~1.3]{Huisken87}}]\label{cor:strongmaxpkappa}
Let $X:\Sp\!\times[0,T)\to\R^2$ be a solution of~\eqref{eq:ccf} and let $\kappa\geq0$ on $\Si_0$.
Then $\kappa>0$ on $\St$ for all $t\in(0,T)$.
\end{Cor}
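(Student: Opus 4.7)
The plan is to apply the parabolic strong maximum principle to the scalar evolution equation for the curvature furnished by Lemma~\ref{lem:evolutionequations}, namely
$$\fracp{\kappa}{t}=\fracp{^2\kappa}{s^2}-(h-\kappa)\kappa^2=\fracp{^2\kappa}{s^2}+c(s,t)\,\kappa\,,$$
where we set $c(s,t):=\kappa(s,t)\big(\kappa(s,t)-h(t)\big)$. Because the solution $X$ is smooth on $\Sph^1\times[0,t_0]$ for any $t_0<T$, and $h$ is smoothly bounded whenever the curvature is bounded, the coefficient $c$ is a bounded, smooth function on $\Sph^1\times[0,t_0]$. Thus, viewed as a linear parabolic equation with bounded coefficients on a compact manifold without boundary, the standard maximum principle theory applies.

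First I would prove preservation of non-negativity. Either directly via Hamilton's trick on $\varphi(t):=\min_{\Sph^1}\kappa(\,\cdot\,,t)$, noting that at a spatial minimum the equation gives $\fracd{\varphi}{t}\geq\varphi^2(\varphi-h)$, which vanishes at $\varphi=0$ and hence prevents $\varphi$ from turning negative; or by simply invoking the weak maximum principle for $\fracp{\kappa}{t}=\fracp{^2\kappa}{s^2}+c\kappa$ on $\Sph^1\times[0,t_0]$ with initial datum $\kappa(\,\cdot\,,0)\geq0$. Either route yields $\kappa\geq0$ on $\Sph^1\times[0,T)$.

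Next I would rule out $\kappa\equiv0$ at any subsequent time. Since $\St$ is closed, embedded and positively oriented, its total curvature is $\alpha(t)=\ints\kappa\,ds_t=2\pi>0$ for every $t$, so in particular $\kappa(\,\cdot\,,t)\not\equiv0$.

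Finally I would invoke the strong maximum principle: applied to the linear parabolic equation $\fracp{\kappa}{t}=\fracp{^2\kappa}{s^2}+c\kappa$ with bounded $c$, a non-negative solution which is not identically zero must be strictly positive everywhere for $t>0$. Combined with the previous step this gives $\kappa>0$ on $\St$ for every $t\in(0,T)$.

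There is no real obstacle here; the only mild point to be careful about is that $c$ involves $\kappa$ itself, so one must first establish the a priori boundedness of $c$ on each slab $[0,t_0]$ (which is immediate from the smoothness of the solution and the assumption on $h$) before quoting the linear strong maximum principle.
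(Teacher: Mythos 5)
Your argument is correct and is exactly the standard maximum-principle proof behind the result: the paper itself offers no proof, only the citation to Huisken, and the intended argument is the one you give — non-negativity is preserved for the linear parabolic equation $\partial_t\kappa=\partial_s^2\kappa+c\kappa$ with $c=\kappa(\kappa-h)$ bounded on compact time slabs, and the strong maximum principle upgrades this to strict positivity since a closed curve can never have $\kappa\equiv0$ (its total curvature is $2\pi$). Your closing caveat about first bounding $c$ on each slab $[0,t_0]$ is the right point to flag, and it is indeed immediate from smoothness of the solution.
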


\begin{Lemma}[Gage~{\cite{Gage86}}]\label{lem:dtL}
Let $X:\Sp\!\times(0,T)\to\R^2$ be a solution of~\eqref{eq:ccf}.
Then, for $t\in(0,T)$,
$$\fracd{A}{t}=hL-2\pi
\qquad\text{ and }\qquad
\fracd{L}{t}=2\pi h-\intst\kappa^2\,ds_t\,.$$
\end{Lemma}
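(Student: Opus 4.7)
The two identities are classical first-variation formulas, and the plan is to derive them directly from the evolution of $v$ and $\partial_t X$ given in Lemma~\ref{lem:evolutionequations} together with the Gauss--Bonnet identity $\ints \kappa\,ds = 2\pi$ for embedded, closed, positively oriented curves.

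For the length formula, I would start from $L_t = \int_{\Sp} v(p,t)\,dp$ and differentiate under the integral, using $\partial_t v = \kappa(h-\kappa)v$:
\begin{align*}
\fracd{L}{t} = \int_{\Sp} \fracp{v}{t}\,dp = \intst \kappa(h-\kappa)\,ds_t = h(t)\intst \kappa\,ds_t - \intst \kappa^2\,ds_t.
\end{align*}
Since $\St$ is closed and embedded, $\alpha(t) = \intst\kappa\,ds_t = 2\pi$, which gives the stated formula. This is the easy half.

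For the area, the cleanest route is to use the outward normal speed interpretation: if $\Omega_t$ denotes the bounded region with $\partial\Omega_t = \St$ and its boundary moves with outward normal velocity $F = h(t)-\kappa$, then the transport theorem (equivalently, differentiation of $A = \tfrac{1}{2}\ints \langle X,\nnu\rangle\,ds$ using $\partial_t \nnu = \partial_s\kappa\cdot\btau$ and $\partial_t ds = \kappa(h-\kappa)ds$ and an integration by parts in the tangential piece) yields
\begin{align*}
\fracd{A}{t} = \intst F\,ds_t = \intst (h-\kappa)\,ds_t = h(t)L_t - 2\pi,
\end{align*}
again using $\intst\kappa\,ds_t = 2\pi$. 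The only nontrivial point, and the one I would verify most carefully, is the sign convention: the normal $\nnu = (\btau_2,-\btau_1)$ is outward for positively oriented curves, so the speed $h-\kappa$ of $\partial_t X$ is genuinely the outward normal velocity, making the area gain on the right-hand side have the correct sign (in particular, $h\equiv 0$ recovers the standard $dA/dt = -2\pi$ for CSF). Once the sign of $\nnu$ is pinned down, both formulas reduce to a one-line application of Lemma~\ref{lem:evolutionequations} and the total curvature identity.
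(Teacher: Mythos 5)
Your proposal is correct: both identities follow exactly as you describe from $\partial_t v=\kappa(h-\kappa)v$, the first-variation/transport formula for the enclosed area with outward normal speed $h-\kappa$, and $\intst\kappa\,ds_t=2\pi$ for embedded closed curves, and your sign check (recovering $dA/dt=-2\pi$ for $h\equiv0$) is the right sanity test. The paper gives no proof of its own here — it cites Gage — and the cited argument is the same standard computation you carried out.
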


\begin{Prop}\label{prop:T<infty}
Let $X:\Do\times[0,T)\to\R^2$ be a solution of~\eqref{eq:ccf} with initial curve $\Si_0$.
If $T<\infty$, then $\max_{p\in\Do}|\kappa(p,t)|\to\infty$ for $t\to T$.
\end{Prop}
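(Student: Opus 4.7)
The plan is to prove the contrapositive: if the curvature stays uniformly bounded on $[0,T)$ with $T<\infty$, then the solution can be extended past $T$, contradicting the maximality of $T$. So assume $\sup_{\Do\times[0,T)}|\kappa|\leq K<\infty$. By the standing assumption below \eqref{eq:ccf} that $h$ is smoothly bounded whenever the curvature is bounded, we obtain $|h(t)|\leq H$ uniformly on $[0,T)$, and likewise, once bounds on $\partial_s^m\kappa$ are established inductively, the time derivatives of $h$ will be controlled.

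The first step is to control the length element so that the parametrisation does not degenerate. From Lemma~\ref{lem:evolutionequations}, $\partial_t v=\kappa(h-\kappa)v$, which together with the bounds on $\kappa$ and $h$ yields $v(p,t)/v(p,0)\in[e^{-(K^2+KH)T},e^{(K^2+KH)T}]$ for all $(p,t)\in\Do\times[0,T)$. Hence $v$ is uniformly bounded above and away from zero, and the same holds for the comparison between $\partial_p$ and $\partial_s$.

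The next step is Bernstein-type estimates for higher derivatives of $\kappa$. Using $\partial_t\kappa=\partial_s^2\kappa-(h-\kappa)\kappa^2$ and the commutator identity from Lemma~\ref{lem:evolutionequations}, one computes for each $m\geq1$ an equation of the form
$$\fracp{}{t}(\pa_s^m\kappa)^2=\pa_s^2(\pa_s^m\kappa)^2-2(\pa_s^{m+1}\kappa)^2+P_m(\kappa,\pa_s\kappa,\ldots,\pa_s^m\kappa,h),$$
where $P_m$ is polynomial with coefficients bounded by $K,H$. Applying the parabolic maximum principle to auxiliary functions of the type $(\pa_s^{m-1}\kappa)^2+t(\pa_s^m\kappa)^2$ one inductively obtains $|\pa_s^m\kappa|\leq C_m$ on $\Do\times[0,T)$ for every $m\geq0$. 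In the non-compact case $\Do=\R$, the asymptotic hypotheses \eqref{eq:alpha}--\eqref{eq:intabskappa} ensure $\kappa\to0$ at the ends, so the maximum principle for the decaying quantity $(\pa_s^m\kappa)^2$ still applies; alternatively one localises via cut-off functions along compact subsets and lets them exhaust $\R$.

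Combining these bounds with $|\partial_t X|=|h-\kappa|\leq K+H$, one concludes that $X(\cdot,t)$ is uniformly bounded in $C^{k}$ (locally in $p$ for $\Do=\R$) for every $k$, and by Arzelà--Ascoli converges in $C^{\infty}$ as $t\to T$ to a smooth embedded limit $X(\cdot,T):\Do\to\R^2$ satisfying all of \eqref{eq:alpha}--\eqref{eq:intabskappa} (in the non-compact case). Applying the short time existence Theorem~\ref{thm:ste} with initial datum $X(\cdot,T)$ extends the flow to $[0,T+\varepsilon)$, contradicting the definition of $T$.

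The main obstacle is the derivative estimate step: besides the routine computation of the evolution of $(\pa_s^m\kappa)^2$, one must be careful that the assumed ``smooth boundedness'' of $h$ together with the inductive control on the $s$-derivatives of $\kappa$ really does yield $C^\infty$-in-$t$ control of $h$; for $h$ of the shape \eqref{eq:h_ap}, \eqref{eq:h_lp}, or \eqref{eq:h_3a}, this is transparent since $h$ is a polynomial in $\kappa$ integrated in $s$ together with powers of $L_t$ (whose lower bound follows from the length element estimate in the compact case, and from the asymptotic lines in the non-compact case).
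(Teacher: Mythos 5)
Your proposal is correct and follows essentially the same route as the paper, which simply cites Pihan (Section~6.3) and Dittberner (Chapter~4) for the claim that all derivatives of the curvature can be bounded in terms of the curvature, and then Huisken's Theorem~8.1 for the standard extension-past-$T$ contradiction; your write-up fills in exactly those delegated steps (length-element control, Bernstein-type estimates, smooth convergence at $t=T$, re-application of short time existence). The only caveat, which the paper shares via Remark~2.2, is that the final extension step relies on short time existence being \emph{assumed} when $\Do=\R$ or $h\le0$ at the limit time, since Theorem~\ref{thm:ste} is only proved for closed curves with $h(0)>0$.
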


\begin{proof}
Like in~\cite[Section~6.3]{Pihan98} (see also~\cite[Chapter~4]{Dittberner18}), we can bound the derivatives of the curvature in terms of the curvature as long as the curvature is bounded.
The proposition then follows like in~\cite[Thm.~8.1]{Huisken84}. 
\end{proof}

\section{Angles and local total curvature}\label{sec:theta2}

We want to exploit the relationship between angles of tangent vectors and local total curvatures and prove a strong maximum principle for the latter. \\

Define $\vartheta:\Do\times[0,T)\to\Sp$ to be the angle between the $x_1$-axis and the tangent vector, so that
$$\vartheta(p,t)=\begin{cases}
\arccos(\langle\e_1,\btau(p,t)\rangle)&\qquad\text{ if }\langle\e_2,\btau(p,t)\rangle\geq0 \\
2\pi-\arccos(\langle\e_1,\btau(p,t)\rangle)&\qquad\text{ if }\langle\e_2,\btau(p,t)\rangle<0\,.
\end{cases}$$
Since $\nnu=(\btau_2,-\btau_1)$, 
\begin{align}\label{eq:defvartheta}
\cos(\vartheta)=\langle\e_1,\btau\rangle=-\langle\e_2,\nnu\rangle
\quad\text{ and }\quad
\sin(\vartheta)=\langle\e_2,\btau\rangle=\langle\e_1,\nnu\rangle\,.
\end{align}
For a fixed time $t\in[0,T)$, we can define the angle $\tilde\vartheta$ via the arc length parameter by $\tilde\vartheta:s(\Do,t)\to[0,2\pi)$. 
As explained earlier, we can omit the ``$\sim$'' for simplicity.

\begin{Lemma}[see Gage--Hamilton~{\cite[Lem.~3.1.5]{GageHamilton86}}]\label{lem:dsvartheta}
Let $X:\Do\times[0,T)\to\R^2$ be a solution of~\eqref{eq:ccf} with initial curve $\Si_0$.
Then
$$\fracp{\vartheta}{s}=\kappa
\qquad\text{ and }\qquad
\fracp{\vartheta}{t}=\fracp{\kappa}{s}
\quad\text{ for }\;
t\in(0,T).$$
\end{Lemma}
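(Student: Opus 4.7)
The plan is to argue purely locally: since $\btau$ is smooth in $(p,t)$ and unit-length, we can, away from the discontinuity set of the global angle function, work with a smooth local lift $\vartheta:\Do\times[0,T)\to\R$ characterised (mod $2\pi$) by
\begin{align*}
\btau=(\cos\vartheta,\sin\vartheta),\qquad
\nnu=(\btau_2,-\btau_1)=(\sin\vartheta,-\cos\vartheta).
\end{align*}
Both identities we want are then derived by differentiating these representations and comparing with the Frenet--Serret relation and the evolution equation for $\btau$ from Lemma~\ref{lem:evolutionequations}. The only wrinkle is that the $\vartheta$ defined in the excerpt has a jump where $\langle\e_2,\btau\rangle$ changes sign, so I would first point out that a smooth local lift exists on a neighbourhood of any $(p_0,t_0)$ and that the stated derivatives of $\vartheta$ are independent of which branch is chosen, so it suffices to verify them for the lift.

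For the space derivative, I would differentiate $\btau=(\cos\vartheta,\sin\vartheta)$ with respect to arc length to obtain
\begin{align*}
\fracp{\btau}{s}=\fracp{\vartheta}{s}(-\sin\vartheta,\cos\vartheta)=-\fracp{\vartheta}{s}\nnu.
\end{align*}
Comparing with the Frenet--Serret identity $\partial\btau/\partial s=-\kappa\nnu$ from the excerpt yields $\partial\vartheta/\partial s=\kappa$ immediately.

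For the time derivative, I would differentiate the same representation in $t$ to get
\begin{align*}
\fracp{\btau}{t}=\fracp{\vartheta}{t}(-\sin\vartheta,\cos\vartheta)=-\fracp{\vartheta}{t}\nnu,
\end{align*}
and then invoke the evolution identity $\partial\btau/\partial t=-(\partial\kappa/\partial s)\nnu$ from Lemma~\ref{lem:evolutionequations}. Taking inner product with $\nnu$ (which is a unit vector) gives $\partial\vartheta/\partial t=\partial\kappa/\partial s$.

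There is no real obstacle here: the argument is essentially algebraic once one is comfortable with the local smooth lift of $\vartheta$. The slightly delicate point worth noting is that the definition of $\vartheta$ in $[0,2\pi)$ is only piecewise smooth, but both claimed derivatives are local and branch-independent, so working with the smooth lift is legitimate and yields the identities globally in $(p,t)$.
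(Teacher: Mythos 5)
Your proof is correct and is exactly the standard argument (the paper itself gives no proof, only the citation to Gage--Hamilton, where the same computation via the local lift $\btau=(\cos\vartheta,\sin\vartheta)$ and comparison with the Frenet--Serret and evolution equations is carried out). Your remark that the $[0,2\pi)$-valued $\vartheta$ must be replaced by a smooth local lift, whose derivatives are branch-independent, correctly handles the only delicate point.
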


Like in~\eqref{eq:intkappa}, we define the total local curvature $\theta:\Do\times\Do\times[0,T)\to\R$ by
\begin{align}\label{eq:deftheta}
\theta(p,q.t):=\int_p^q\kappa(r,t)\,ds_t\,,
\end{align}
where we integrate in direction of the parametrisation.
The total curvature $\alpha(t)$ is given by the full integral over the curvature as stated in~\eqref{eq:def_alpha}.
For $\Do=\Sp$ and $p,q\in[0,2\pi)$, we set
\begin{align*}
\theta(p,q.t)=
\begin{dcases}
\int_p^q\kappa\,ds_t&\qquad\text{ if }p\leq q\\
\int_p^{2\pi}\kappa\,ds_t+\int_0^q\kappa\,ds_t&\qquad\text{ if }q<p\,.
\end{dcases}
\end{align*}
Then 
\begin{align}\label{eq:2pithetatheta}
2\pi=\theta(p,q,t)+\theta(q,p,t)
\end{align}
for all $p,q\in\Sp$.
For $\Do=\R$ and $p<q$, we set $\theta(q,p,t)=-\theta(p,q,t)$.
By Lemma~\ref{lem:dsvartheta},
\begin{align}\label{eq:thetavartheta}
\theta(p,q)
=\int_p^q\kappa\,ds
=\int_p^q\frac1v\fracp{\vartheta}{r}\,vdr
=\vartheta(q)-\vartheta(p)+2\pi\omega(p,q)\,,
\end{align}
where $\omega\in\Z$ is the local winding number.
Hence, $\theta$ is the angle between the tangent vectors at two points on the curve modulo the local winding number.
If a curve $\Si=X(\Do)$ is embedded and convex, then 
$0\leq\theta(p,q)<\alpha$
for all $p,q\in\Do$.
For $\Do=\Sp$ and fixed $p\in\Sp$,
\begin{align}\label{eq:thetalim0}
\lim_{q\searrow p}\theta(p,q)
=\lim_{q\searrow p}\int_p^q\kappa\,vdr
=0
\end{align}
and
\begin{align}\label{eq:thetalim2pi}
\lim_{q\nearrow p}\theta(p,q)
=\lim_{q\nearrow p}\int_0^q\kappa\,vdr+\int_p^{2\pi}\kappa\,vdr
=\intsp\kappa\,vdr
=2\pi\,.
\end{align}
Hence, $\theta$ is discontinuous along the diagonal $\{p=q\}\subset\Sp\!\times\Sp$.

\begin{Lemma}\label{lem:minmaxtheta}
Let $\Si=X(\Sp)$ be an embedded, closed curve. 
Then 
$$\sup_{\Sp\!\times\Sp}\theta=2\pi-\min_{\Sp\!\times\Sp}\theta\,.$$
\end{Lemma}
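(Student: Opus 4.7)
The plan is to deduce this essentially for free from equation~\eqref{eq:2pithetatheta}, which states that $\theta(p,q,t)+\theta(q,p,t)=2\pi$ for all $p,q\in\Sp$. The lemma is then a formal consequence of swapping the two arguments.

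First I would note that $\theta$ is continuous on the compact space $\Sp\times\Sp$ (since $\kappa$ is smooth and the definite integral~\eqref{eq:deftheta} depends continuously on its endpoints), so both the supremum and the minimum are attained and the statement is well-posed. Next, rewriting~\eqref{eq:2pithetatheta} as $\theta(p,q)=2\pi-\theta(q,p)$ and using that the involution $\sigma:(p,q)\mapsto(q,p)$ is a bijection of $\Sp\times\Sp$, I would take the supremum over $(p,q)$ on both sides to get
$$\sup_{\Sp\times\Sp}\theta(p,q)=\sup_{(p,q)}\bigl(2\pi-\theta(q,p)\bigr)=2\pi-\inf_{(p,q)}\theta(q,p)=2\pi-\min_{\Sp\times\Sp}\theta,$$
which is exactly the claimed identity.

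There is no real obstacle here; the lemma is purely algebraic given~\eqref{eq:2pithetatheta}. The only point that warrants a sentence of care is the diagonal $\{(p,p):p\in\Sp\}$, where the piecewise definition of $\theta$ on $[0,2\pi)\times[0,2\pi)$ could produce either $0$ (from the empty integral) or $2\pi$ (from the full loop) depending on the direction chosen; since both values appear as $\theta(p,p)$ under the natural conventions, they are consistent with~\eqref{eq:2pithetatheta} and do not affect the sup/min relationship. Everything else is a one-line application of the involution.
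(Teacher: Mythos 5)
Your argument is correct and is essentially the paper's own proof: both rest entirely on the symmetry $\theta(p,q)+\theta(q,p)=2\pi$ from~\eqref{eq:2pithetatheta} and the involution $(p,q)\mapsto(q,p)$, the paper merely phrasing it by evaluating at a maximising pair rather than taking suprema of both sides. One small imprecision: $\theta$ is in fact discontinuous across the diagonal (it jumps between $0$ and $2\pi$, as the paper notes in the proof of Theorem~\ref{thm:dtdstheta}), so attainment of the extrema should be argued on the compactified cylinder $\bar S$ rather than by continuity on $\Sp\!\times\Sp$ — but since the diagonal only contributes the values $0$ and $2\pi$, which are swapped by $x\mapsto 2\pi-x$, this does not affect the identity, exactly as you observe.
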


\begin{proof}
Let the maximum of $\theta$ be attained at $p_0,q_0\in\Sp$, that is, by~\eqref{eq:2pithetatheta},
\begin{align}\label{eq:2pithetatheta_2}
\max_{\Sp\!\times\Sp}\theta=\theta(p_0,q_0)=2\pi-\theta(q_0,p_0)\,.
\end{align}
Then, for all $p,q\in\Sp$, $p\ne q$, by~\eqref{eq:2pithetatheta} and~\eqref{eq:2pithetatheta_2},
$$2\pi-\theta(q_0,p_0)
=\theta(p_0,q_0)
\geq\theta(q,p)
=2\pi-\theta(p,q)\,.$$
Consequently, $\theta(q_0,p_0)\leq\theta(p,q)$ for all $p,q\in\Sp$, $p\ne q$, which implies with~\eqref{eq:2pithetatheta_2},
\[\min_{\Sp\!\times\Sp}\theta=\theta(q_0,p_0)
=2\pi-\max_{\Sp\!\times\Sp}\theta\,.\qedhere\]
\end{proof}

\begin{Lemma}\label{lem:minmaxthetaentire}
Let $\Si=X(\R)$.
Then 
$$\sup_{\R\times\R}\theta\leq\alpha-2\inf_{\R\times\R}\theta\,.$$
\end{Lemma}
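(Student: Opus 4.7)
My strategy is to imitate the proof of Lemma~\ref{lem:minmaxtheta} for the closed case, replacing the identity $\theta(p,q)+\theta(q,p)=2\pi$ by the additivity of the curvature integral over $\R$. Because of the assumption~\eqref{eq:intabskappa} that $\int_{\Si}|\kappa|\,ds<\infty$, the one-sided tail integrals $\int_{-\infty}^{p}\kappa\,ds$ and $\int_{q}^{+\infty}\kappa\,ds$ are finite and are realised as the limits $\lim_{r\to-\infty}\theta(r,p)$ and $\lim_{s\to+\infty}\theta(q,s)$.

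For $p<q$ I would split the integral of $\kappa$ over $\R$ into three pieces to obtain
$$\alpha = \int_{-\infty}^{p}\kappa\,ds + \theta(p,q) + \int_{q}^{+\infty}\kappa\,ds\,.$$
Since $\theta(r,p)\geq\inf_{\R\times\R}\theta$ for every $r\in\R$, passing to the limit $r\to-\infty$ gives $\int_{-\infty}^{p}\kappa\,ds\geq\inf_{\R\times\R}\theta$, and the same reasoning applies to the right-hand tail. Substituting back,
$$\theta(p,q) = \alpha - \int_{-\infty}^{p}\kappa\,ds - \int_{q}^{+\infty}\kappa\,ds \leq \alpha - 2\inf_{\R\times\R}\theta\,.$$

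For the remaining case $p\geq q$, the antisymmetry convention $\theta(p,q)=-\theta(q,p)$ yields $\theta(p,q)\leq-\inf_{\R\times\R}\theta$, and the desired bound $-\inf\theta\leq\alpha-2\inf\theta$ reduces to $\inf_{\R\times\R}\theta\leq\alpha$. This last inequality follows from the $p<q$ case by taking $p\to-\infty$ and $q\to+\infty$, since then $\theta(p,q)\to\alpha$. Taking the supremum over $\R\times\R$ completes the proof. The argument is essentially bookkeeping; the only non-trivial input is the $L^1$-integrability~\eqref{eq:intabskappa}, which guarantees that the tail integrals exist and that $\theta(p,q)\to\alpha$ in the appropriate limit, so I do not expect any real obstacle.
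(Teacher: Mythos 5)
Your proof is correct and follows essentially the same route as the paper: the paper's argument is exactly the three-piece decomposition $\alpha=\int_{-\infty}^p\kappa\,ds+\theta(p,q)+\int_q^\infty\kappa\,ds\geq2\inf_{\R\times\R}\theta+\theta(p,q)$ for $p<q$. Your additional treatment of the case $p\geq q$ via the antisymmetry convention and the observation $\inf_{\R\times\R}\theta\leq\alpha$ is a harmless (and slightly more complete) elaboration of the same idea.
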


\begin{proof}
For $p,q\in\R$, $p<q$, 
\[\alpha
=\int_{-\infty}^p\kappa\,ds+\int_p^q\kappa\,ds+\int_q^\infty\kappa\,ds 
\geq2\inf_{\R\times\R}\theta+\theta(p,q)\,.\qedhere\]
\end{proof}

For $t\in[0,T)$, we define 
$$\theta_{\min}(t):=\min_{(p,q)\in\Sp\!\times\Sp}\theta(p,q,t)\leq0
\quad\text{ and }\quad
\theta_{\inf}(t):=\inf_{(p,q)\in\R\times\R}\theta(p,q,t)\leq\alpha\,.$$

\begin{Thm}\label{thm:dtdstheta}
Let $X:\Do\times(0,T)\to\R^2$ be a solution of~\eqref{eq:ccf}.
Then
$$\left(\fracp{}{t}-\Delta_{\St}\right)\theta(p,q,t)=0$$
for all $p,q\in\Do$ ($p\ne q$ for $\Do=\Sp$) and $t\in(0,T)$.
Moreover, let $t_0\in(0,T)$.
\begin{enumerate}[(i)]
\item For $\Do=\Sp$, suppose $\theta_{\min}(t_0)<0$, then $\theta_{\min}(t_0)<\theta_{\min}(t)$ for all $t\in(t_0,T)$.
\item For $\Do=\R$, let~\eqref{eq:alpha} be satisfied. 
Suppose $\theta_{\inf}(t_0)<\min\{0,\alpha\}$, then $\theta_{\inf}(t_0)<\theta_{\inf}(t)$ for all $t\in(t_0,T)$. 
\end{enumerate}
\end{Thm}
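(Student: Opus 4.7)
The plan splits naturally into two stages: first verify that $\theta$ satisfies a linear heat equation on the product $\St\times\St$, then apply the parabolic strong maximum principle, taking care of the diagonal $\{p=q\}$ and, in the entire case, of the behaviour at infinity.

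For the PDE, I would differentiate $\theta(p,q,t)=\int_p^q\kappa(r,t)\,v(r,t)\,dr$ in $t$ with $p,q$ held fixed. Lemma~\ref{lem:evolutionequations} gives $\fracp{\kappa}{t}=\fracp{^2\kappa}{s^2}-(h-\kappa)\kappa^2$ and $\fracp{v}{t}=\kappa(h-\kappa)v$, so that $\pa_t(\kappa v)=v\,\pa_s^2\kappa$---the forcing $h$ and the nonlinear curvature terms cancel exactly. Integrating yields $\pa_t\theta=\fracp{\kappa}{s}(q,t)-\fracp{\kappa}{s}(p,t)$. On the other hand, viewing $\theta$ as a function on $\St\times\St$ with arc length on each factor, $\pa_{s_p}\theta=-\kappa(p,t)$ and $\pa_{s_q}\theta=\kappa(q,t)$, so $\Delta_{\St\times\St}\theta=\pa_s\kappa(q,t)-\pa_s\kappa(p,t)=\pa_t\theta$. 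The cancellation of $h$ is what allows a single heat equation to cover all admissible global terms.

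For case (i), note that $\theta\equiv0$ on the diagonal, so $\theta_{\min}(t_0)<0$ forces attainment strictly off the diagonal. To avoid the jump built into the integration convention~\eqref{eq:deftheta} on the diagonal, I would lift to the closed strip $\{(p,q)\in\R^2:0\leq q-p\leq 2\pi\}$, where $\theta$ is smooth, solves the heat equation in the interior, and takes boundary values $0$ on $\{q=p\}$ and $2\pi$ on $\{q-p=2\pi\}$. If $\theta_{\min}(t_1)=\theta_{\min}(t_0)$ for some $t_1\in(t_0,T)$, then $u:=\theta-\theta_{\min}(t_0)$ is a non-negative solution of the linear heat equation which vanishes at an interior point with $t>t_0$; the parabolic strong maximum principle forces $u\equiv 0$ on a parabolic neighbourhood, contradicting $u>0$ on the diagonal. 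For case (ii), the same argument should work once one shows the infimum is attained at a finite interior point. Here the asymptotic conditions~\eqref{eq:alpha}--\eqref{eq:intabskappa} ensure $\theta$ extends continuously to the compactification $[-\infty,+\infty]^2$, with $\theta=0$ on the diagonal, $\theta(-\infty,+\infty,t)=\alpha$ at the corner, and edge values given by tail integrals of $\kappa$. The hypothesis $\theta_{\inf}(t_0)<\min\{0,\alpha\}$ excludes both the diagonal and the corner, so by continuity on the compact closure the infimum is attained at some finite $(p_0,q_0)\in\R\times\R$, and the same strong maximum principle argument applies.

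The main obstacle lies in case (ii): to upgrade the pointwise strict inequality given by the strong maximum principle to the strict inequality of the infima $\theta_{\inf}(t_0)<\theta_{\inf}(t)$, one has to ensure that at each later time $t>t_0$ the new infimum is again attained at a finite interior point rather than merely approached along a sequence running to infinity. This should follow from the persistence of the asymptotic conditions~\eqref{eq:asymp} and the integrability~\eqref{eq:intabskappa} under~\eqref{eq:ccf}, but establishing this preservation (implicit in the setup) is the genuinely non-trivial ingredient of the non-compact case.
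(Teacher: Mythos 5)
Your derivation of the heat equation and your treatment of case (i) essentially reproduce the paper's argument: the cancellation $\partial_t(\kappa v)=v\,\partial^2_s\kappa$ is exactly the mechanism behind $\Delta_{\St}\theta=\partial_t\theta$ in the paper (computed there via $\vartheta$ and Lemma~\ref{lem:dsvartheta}), and your lift to the strip with boundary values $0$ and $2\pi$ is the paper's cylinder $\Sp\!\times\Sp\setminus\{p=q\}$ with its two boundary circles. (Minor point: the non-negativity of $u=\theta-\theta_{\min}(t_0)$ on $[t_0,t_1]$ first requires the weak maximum principle, using that the boundary values $0$ and $2\pi$ exceed $\theta_{\min}(t_0)<0$; only then does the strong maximum principle give strictness.)

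The genuine gap is in case (ii). You claim that because $\theta_{\inf}(t_0)<\min\{0,\alpha\}$ excludes the diagonal and the corner of the compactification $[-\infty,+\infty]^2$, the infimum must be attained at a finite interior point. That does not follow: the infimum can be attained on an \emph{edge} of the compactified square, i.e.\ it can equal $\theta^+_{\inf}=\inf_{p}\lim_{q\to\infty}\theta(p,q,t)$ or $\theta^-_{\inf}=\inf_{p}\lim_{q\to-\infty}\theta(q,p,t)$, and nothing in your argument rules this out (a curve whose tail integral $\int_p^\infty\kappa\,ds$ dips below every value of $\theta$ at finite pairs is perfectly admissible). At such an edge point the two-variable strong maximum principle is not applicable. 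This is exactly the case the paper treats separately: using~\eqref{eq:alpha} and~\eqref{eq:asymp} (constancy of $\alpha$ and of the asymptotic tangent directions), the one-variable functions $\theta^{\pm}(p,t)$ themselves satisfy $(\partial_t-\Delta_{\St})\theta^{\pm}=0$, and the strong maximum principle applied to \emph{these} shows that $\theta^{\pm}_{\inf}$ is strictly increasing as long as it lies below $\min\{0,\alpha\}$. Your closing paragraph correctly senses that the non-compact case hides the difficulty, but misdiagnoses it as a question of attainment versus approach of the infimum at later times; the missing ingredient is the separate heat equation for the boundary-at-infinity values $\theta^{\pm}$, which is precisely where the hypotheses~\eqref{eq:alpha} and~\eqref{eq:asymp} enter the proof.
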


\begin{proof}
We differentiate $\theta$ at $p,q\in\Do$ ($p\ne q$ for $\Do=\Sp$) in direction of $\btau$ with $\btau(\theta) = \fracp{}{s}\theta$, and use Lemma~\ref{lem:dsvartheta} to obtain
$$\btau_p(\theta)
=-\kappa_p
\qquad\text{ and }\qquad
\btau_q(\theta)
=\kappa_q\,,$$
as well as
\begin{align}\label{eq:ds2theta}
\Delta_{\St}\theta
=\btau_p^2(\theta)+\btau_q^2(\theta)
=\btau_q(\kappa_q)-\btau_p(\kappa_p)
=\fracp{\vartheta_q}{t}-\fracp{\vartheta_p}{t}
=\fracp{\theta}{t}\,.
\end{align}
Since $X$ is smooth, $\theta$ is smooth in $\Do\times\Do\times[0,T)$.

(i) Let $\Do=\Sp$. 
The set $S:=\Sp\!\times\Sp\setminus\{p=q\}$ is an oriented cylinder.
By~\eqref{eq:thetalim0} and~\eqref{eq:thetalim2pi}, the closure $\bar S$ has two boundaries
$$(\pa S)_-=\left\{(p,p)\,\big|\, p\in\Sp\right\}\quad\text{ and }\quad
(\pa S)_+=\left\{\left(\lim_{r\nearrow p}r,p\right)\,\bigg|\, p\in\Sp\right\}\,,$$
where $\theta\equiv0$ on $(\pa S)_-\times[0,T)$ and $\theta\equiv2\pi$ on $(\pa S)_+\times[0,T)$.
The claim now follows from the strong maximum principle with boundary conditions.

(ii) Let $\Do=\R$.
By~\eqref{eq:alpha}, $\Si_t$ is up to translation smoothly asymptotic to two time-independent lines, that is, $|\btau_p\kappa(p,t)|\to0$ uniformly for $p\to\pm\infty$, by~\eqref{eq:ds2theta}, $\lim_{p\to\pm\infty}\vartheta_p$ is constant in time.
For $p\in\R$, define
$$\theta^-(p,t):=\lim_{q\to-\infty}\theta(q,p,t)
\quad\text{ and }\quad
\theta^+(p,t):=\lim_{q\to\infty}\theta(p,q,t)\,.$$
If $\theta_{\inf}<\min\{0,\alpha\}$, $\theta_{\inf}$ either equals $\theta_{\inf}^+$ or $\theta_{\inf}^-$, or is attained at a local minimum.
Suppose $\theta_{\inf}<0$ is attained at a local minimum.
We have that $\theta\equiv0$ on $\{p=q\}\times[0,T)=\pa\{p\ne q\}\times[0,T)$.
The strong maximum principle yields that $\theta$ is strictly decreasing.
Suppose $\theta_{\inf}(t)<\min\{0,\alpha\}$ equals $\theta_{\inf}^+$ or $\theta_{\inf}^-$.
By~\eqref{eq:ds2theta},
$$\left(\fracp{}{t}-\Delta_{\St}\right)\theta^{\pm}=0\,.$$
If $\theta_{\inf}^{\pm}<\min\{0,\alpha\}$, then $\theta_{\inf}^{\pm}$ is attained at a point $p\in\R$.
The strong maximum principle yields that $\theta_{\inf}^{\pm}$ is strictly increasing as long as $\theta_{\inf}^{\pm}<\min\{0,\alpha\}$.
\end{proof}

We define the extrinsic distance $d:\Do\times\Do\times[0,T)\to\R$ by
$$d(p,q,t):=\Vert X(q,t)-X(p,t)\Vert_{\R^2}$$
and the vector $\bw:\big(\Do\times\Do\times[0,T)\big)\setminus\{d=0\}\to\R^2$ by
$$\bw(p,q,t):=\frac{X(q,t)-X(p,t)}{d(p,q,t)}\,.$$

\begin{Lemma}\label{lem:thetabeta}
Let $\Si=X(\Do)$ be an embedded curve and $p,q\in\Do$ with $d(p,q)\ne0$.
Let $\langle\bw,\btau_p\rangle=\langle\bw,\btau_q\rangle=\cos(\beta/2)$ for $\beta\in[0,\pi]$.
Then either
\begin{enumerate}[(i)]
\item $\langle\bw,\nnu_p\rangle=-\langle\bw,\nnu_q\rangle=-\sin(\beta/2)$ and $\theta(p,q)=2\pi k+\beta$,
\item $\langle\bw,\nnu_p\rangle=-\langle\bw,\nnu_q\rangle=\sin(\beta/2)$ and $\theta(p,q)=2\pi k-\beta$, or
\item $\langle\bw,\nnu_p\rangle=\langle\bw,\nnu_q\rangle=\pm\sin(\beta/2)$ and $\theta(p,q)=2\pi k$
\end{enumerate}
for $k\in\Z$.
\end{Lemma}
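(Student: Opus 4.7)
The statement is purely trigonometric and local in nature: once the ``sign ambiguity'' in passing from $\cos(\beta/2)$ to the perpendicular component is enumerated, the conclusion about $\theta(p,q)$ follows from \eqref{eq:thetavartheta}. The plan is therefore to introduce the angles $\vartheta_p=\vartheta(p)$, $\vartheta_q=\vartheta(q)$ as in \eqref{eq:defvartheta} and an angle $\phi\in[0,2\pi)$ representing $\bw$, so that
$$\btau_p=(\cos\vartheta_p,\sin\vartheta_p),\qquad \nnu_p=(\sin\vartheta_p,-\cos\vartheta_p),\qquad \bw=(\cos\phi,\sin\phi),$$
and analogously at $q$. A direct computation then gives the two identities
$$\langle\bw,\btau_p\rangle=\cos(\phi-\vartheta_p),\qquad \langle\bw,\nnu_p\rangle=-\sin(\phi-\vartheta_p),$$
and the same with $p$ replaced by $q$.

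The first step is to translate the hypothesis $\langle\bw,\btau_p\rangle=\langle\bw,\btau_q\rangle=\cos(\beta/2)$ into the congruences
$$\phi-\vartheta_p\equiv\pm\tfrac{\beta}{2}\pmod{2\pi},\qquad \phi-\vartheta_q\equiv\pm\tfrac{\beta}{2}\pmod{2\pi}.$$
The second step is to apply \eqref{eq:thetavartheta}, namely $\theta(p,q)=\vartheta_q-\vartheta_p+2\pi\omega$ for some $\omega\in\Z$, to the difference of these two congruences. There are four sign combinations, which pair up into the three cases (i)--(iii):
\begin{itemize}
\item If the signs are opposite, with $\phi-\vartheta_p\equiv\beta/2$ and $\phi-\vartheta_q\equiv-\beta/2$, subtraction gives $\vartheta_q-\vartheta_p\equiv\beta\pmod{2\pi}$, i.e.\ $\theta(p,q)=2\pi k+\beta$ for some $k\in\Z$; the normal components are then $\langle\bw,\nnu_p\rangle=-\sin(\beta/2)$ and $\langle\bw,\nnu_q\rangle=\sin(\beta/2)$, producing case (i). The mirror-image choice yields case (ii).
\item If the signs agree, then $\vartheta_q-\vartheta_p\equiv0\pmod{2\pi}$, so $\theta(p,q)=2\pi k$, and the normal components coincide with common value $\mp\sin(\beta/2)$, which is case (iii).
\end{itemize}

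The only subtle point is making the passage from $\cos(\phi-\vartheta_p)=\cos(\beta/2)$ with $\beta\in[0,\pi]$ to $\phi-\vartheta_p\equiv\pm\beta/2\pmod{2\pi}$ rigorous, since $\cos$ is $2\pi$-periodic and even; this is where the restriction $\beta\in[0,\pi]$ (so $\beta/2\in[0,\pi/2]$) is used to identify the two solutions modulo $2\pi$. Once that is cleanly noted, no further obstacle arises and the lemma reduces to the short case analysis above. I do not foresee any real difficulty; the argument is a book-keeping exercise on the signs of the sines paired with \eqref{eq:thetavartheta}.
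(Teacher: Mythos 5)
Your proposal is correct and follows essentially the same route as the paper: the paper reduces to $\bw=\e_1$ by rotation invariance and then reads off $\vartheta_p,\vartheta_q\in\{\beta/2,\,2\pi-\beta/2\}$ from \eqref{eq:defvartheta}, which is exactly your congruence $\phi-\vartheta_{p}\equiv\pm\beta/2\pmod{2\pi}$ specialised to $\phi=0$, before enumerating the same sign combinations via \eqref{eq:thetavartheta}. The subtlety you flag (using $\beta/2\in[0,\pi/2]$ to pin down the two solutions of the cosine equation) is handled identically in the paper through the observation $\cos(\beta/2)\geq0$.
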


\begin{proof}
The angles are invariant under rotations in the plane, thus we may assume $\bw=\e_1$.
Since $\beta\in[0,\pi]$, the definition~\eqref{eq:defvartheta} of $\vartheta\in[0,2\pi)$ yields
$$\cos(\vartheta_p)=\langle\e_1,\btau_p\rangle=\cos\!\left(\frac\beta2\right)\geq0
\quad\text{ and }\quad
\sin(\vartheta_p)=\langle\e_1,\nnu_p\rangle=\pm\sin\!\left(\frac\beta2\right)$$
as well as
$$\cos(\vartheta_q)=\langle\e_1,\btau_q\rangle=\cos\!\left(\frac\beta2\right)\geq0
\quad\text{ and }\quad
\sin(\vartheta_q)=\langle\e_1,\nnu_q\rangle=\pm\sin\!\left(\frac\beta2\right)\,.$$
Hence, by~\eqref{eq:thetavartheta},
\begin{align}\label{eq:theta_2}
\vartheta_p,\vartheta_q\in\left\{\frac\beta2,2\pi-\frac\beta2\right\}\quad\text{ and }\quad
\theta=2\pi\omega+\vartheta_q-\vartheta_p\,,
\end{align}
where $\omega\in\Z$. 

(i) Assume that $\sin(\vartheta_p)=-\sin(\beta/2)\leq0$ and $\sin(\vartheta_q)=\sin(\beta/2)\geq0$.
From~\eqref{eq:theta_2} it follows that $\vartheta_p=2\pi-\beta/2$, $\vartheta_q=\beta/2$ and $\theta=2\pi(\omega+1)+\beta$.

(ii) Assume that $\sin(\vartheta_p)=\sin(\beta/2)\geq0$ and $\sin(\vartheta_q)=-\sin(\beta/2)\leq0$.
From~\eqref{eq:theta_2} it follows that $\vartheta_p=\beta/2$, $\vartheta_q=2\pi-\beta/2$ and $\theta=2\pi(\omega+1)-\beta$.

(iii) Assume that $\sin(\vartheta_p)=\pm\sin(\beta/2)$ and $\sin(\vartheta_q)=\pm\sin(\beta/2)$.
From~\eqref{eq:theta_2} it follows that either $\vartheta_p=\vartheta_q=\beta/2$ or $\vartheta_p=\vartheta_q=2\pi-\beta/2$ and thus $\theta=2\pi\omega$.
\end{proof}

\section{Distance comparison principle for non\-compact curves}\label{sec:noncompact}

We adapt the methods from Huisken~\cite{Huisken95} to obtain estimates that imply a certain non-collapsing behaviour of the evolving curves.\\

The intrinsic distance $l:\Do\times\Do\times[0,T)\to\R$ is given by
$$l(p,q,t):=\int_p^qv(r,t)\,dr\,.$$
We set $d/l\equiv1$ on $\{p=q\}\times[0,T)$, then $d/l\in C^0(\R\times\R\times[0,T))$.
Embedded curves satisfy $(d/l)(p,q)>0$ for all $p,q\in\R$.
If a curve is not a line, then there exist $p,q\in\R$ so that $d(p,q)<l(p,q)$ and thus $\inf_{\R\times\R}(d/l)<1$. 

\begin{Lemma}
\label{lem:mindltheta2pik}
Let $\Si=X(\R)$ be an embedded curve.
Let $p,q\in\R$, $p\ne q$, such that $\Si$ crosses the connecting line between $X(p)$ and $X(q)$ at $X(r)$ with $r\notin[p,q]$.
Then $(d/l)(p,q)$ cannot be the infimum.
\end{Lemma}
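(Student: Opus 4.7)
The strategy is to exhibit another parameter pair whose ratio $d/l$ is strictly smaller than that at $(p,q)$, which will prevent $(p,q)$ from realising the infimum. By swapping $p$ and $q$ if necessary, I may assume $r<p<q$; the case $p<q<r$ is handled identically.

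Next I would argue that $X(r)$ lies strictly between $X(p)$ and $X(q)$ on the chord. Indeed, the assumption that $\Si$ meets the segment joining $X(p)$ to $X(q)$ at $X(r)$ places the three points on a common line. If one had $X(r)=X(p)$ or $X(r)=X(q)$, embeddedness of $\Si$ would force $r=p$ or $r=q$, contradicting $r\notin[p,q]$. Hence $X(r)$ is an interior point of the segment, so
\[
d(p,q)=d(p,r)+d(r,q),\qquad d(p,r)>0,\qquad d(r,q)>0,
\]
and in particular $d(r,q)<d(p,q)$.

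On the intrinsic side, since $r<p<q$ the interval $[r,q]$ decomposes as $[r,p]\cup[p,q]$, so
\[
l(r,q)=l(r,p)+l(p,q)>l(p,q),
\]
because $r\ne p$ forces $l(r,p)>0$. Combining the two estimates,
\[
\frac{d(r,q)}{l(r,q)}<\frac{d(p,q)}{l(r,q)}<\frac{d(p,q)}{l(p,q)},
\]
so the pair $(r,q)$ realises a strictly smaller value of $d/l$ than $(p,q)$, proving that $(p,q)$ cannot be the infimum.

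There is essentially no serious obstacle here; the only point requiring a little care is ensuring that the crossing point $X(r)$ is genuinely interior to the chord (so that both $d(p,r)$ and $d(r,q)$ are strictly positive), which is exactly where embeddedness of $\Si$ together with $r\notin[p,q]$ is used.
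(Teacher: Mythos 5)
Your argument is correct and is essentially the same as the paper's proof: both decompose the chord at the interior crossing point $X(r)$ to get $d(r,q)<d(p,q)$ (resp.\ $d(p,r)<d(p,q)$) while the intrinsic length satisfies $l(r,q)>l(p,q)$ (resp.\ $l(p,r)>l(p,q)$), so the pair containing $r$ gives a strictly smaller ratio. Your added care about $X(r)$ being genuinely interior to the segment is a fine (and slightly more explicit) touch, but the route is identical.
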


\begin{proof}
Let $X(\R)$ cross the connecting line between $X(p)$ and $X(q)$ at $X(r)$.
Then $X(r)=X(p)+\bw(p,q)\Vert X(r)-X(p)\Vert$.
Set $d:=d(p,q)$, $d_1:=d(p,r)$ and $d_2:=d(r,q)$.
Then $d=d_1+d_2$.
Furthermore, set $l:=l(p,q)$, $l_1:=l(p,r)$ and $l_2:=l(r,q)$.
If $p<q<r$, then $l<l_1$ and $d/l>d_1/l_1$.
If $r<p<q$, then $l<l_2$ and $d/l>d_2/l_2$.
Thus, $d/l$ cannot be a global minimum.
\end{proof}

Now we can prove a similar result to~\cite[Thm.~2.1]{Huisken95}.

\begin{Thm}
\label{thm:minimumdl}
Let $\Si_0=X_0(\R)$ be a smooth, embedded curve satisfying~\eqref{eq:alpha} and~\eqref{eq:intkappageqminuspi}. 
Let $X:\R\times[0,T)\to\R^2$ be a solution of~\eqref{eq:ccf} satisfying~\eqref{eq:h_1}, and with initial curve $\Si_0$.
Then there exists a constant $c(\Si_0)>0$ such that
$$\inf_{(p,q,t)\in\R\times\R\times[0,T)}\frac dl(p,q,t)\geq c\,.$$
\end{Thm}

\begin{proof}
Let $\Si_0=X_0(\R)$ be an embedded curve satisfying~\eqref{eq:alpha} and~\eqref{eq:intkappageqminuspi}.
Then
\begin{align}\label{eq:limdl}
\inf_{t\in[0,T)}\lim_{p\to\infty}\frac dl(p,-p,t)=:c_1(\Si_0)\in(0,1]
\end{align}
for all $t\in(0,T)$.
Lemma~\ref{lem:minmaxthetaentire} implies that $\theta_0\in[-\pi,\alpha+2\pi]$.
From the maximum principle for $\theta$, Theorem~\ref{thm:dtdstheta}, it follows that
\begin{align}\label{eq:thetainteral2dl}
\theta(p,q,t)\in(-\pi,\alpha+2\pi)
\end{align}
for all $p,q\in\Sp$ and $t\in(0,T)$.
Since $d/l$ is continuous and initially positive, there exists a time $T'\in(0,T]$ so that $d/l>0$ on $[0,T')$.
Fix $t_0\in(0,T')$.
If $\Si_{t_0}$ is a line, then $d/l\equiv1$ on $\R\times\R$. 
Assume that $\Si_{t_0}$ is not a line so that $\inf_{\R\times\R}(d/l)<1$ at $t_0$.
Let $p,q\in\R$, $p\ne q$, be points where a local spatial minimum of $d/l$ at $t_0$ is attained and assume w.l.o.g.\ that $s(p,t_0)<s(q,t_0)$.
We have for all $\bxi\in T_{X(p,t_0)}\Si_{t_0}\bigoplus T_{X(q,t_0)}\Si_{t_0}$,
$$0<\frac dl(p,q,t_0)<1\,,\quad
\bxi\!\left(\frac dl\right)\!(p,q,t_0)=0
\quad\text{ and }\quad
\bxi^2\!\left(\frac dl\right)\!(p,q,t_0)\geq0\,.$$
In the following, we always calculate at the point $(p,q,t_0)$.
The spatial derivatives of $d$ and $l$ are all given in~\cite{Huisken95} (for detailed calculations, see~\cite[Lems.~6.2 and~7.4]{Dittberner18}).
The first spatial derivative of $d/l$ at $(p,t_0)$ in direction of the vector $\bxi=\btau_p\oplus0$ is given by
\begin{align}\label{eq:wtaupdl}
0=(\btau_p\oplus0)\!\left(\frac dl\right)
=-\frac1l\langle\bw,\btau_p\rangle+\frac d{l^2}\,.
\end{align}
At $(q,t_0)$ and for the vector $\bxi=0\oplus\btau_q$, we have
\begin{align}\label{eq:wtauqdl}
0=(0\oplus\btau_q)\!\left(\frac dl\right) 
=\frac1l\langle\bw,\btau_q\rangle-\frac d{l^2}\,.
\end{align}
Since $d/l\in(0,1)$, and by~\eqref{eq:wtaupdl} and~\eqref{eq:wtauqdl}, there exists $\beta\in(0,\pi)$ with
\begin{align}\label{eq:wtaucosthetadl}
\langle\bw,\btau_p\rangle=\langle\bw,\btau_q\rangle
=\frac dl
=\cos\!\left(\frac\beta2\right)
\in(0,1)\,.
\end{align}
By Lemma~\ref{lem:thetabeta},~\eqref{eq:thetainteral2dl} and~\eqref{eq:wtaucosthetadl}, either
\begin{align*}
\langle\bw,\nnu_p\rangle=-\langle\bw,\nnu_q\rangle=-\sin\!\left(\frac\beta2\right)&\quad\text{ and }\quad
2\pi k+\beta=\theta\in(0,\pi)\cup(2\pi,\alpha+2\pi)\,, \\
\langle\bw,\nnu_p\rangle=-\langle\bw,\nnu_q\rangle=\sin\!\left(\frac\beta2\right)&\quad\text{ and }\quad
2\pi k-\beta=\theta\in(-\pi,0)\cup(\pi,2\pi)\,,\quad\text{ or } \\
\langle\bw,\nnu_p\rangle=\langle\bw,\nnu_q\rangle=\pm\sin\!\left(\frac\beta2\right)&\quad\text{ and }\quad
\theta\in\{0,2\pi\}
\end{align*}
for $k\in\Z$.
We use the evolution equation~\eqref{eq:ccf} and Lemma~\ref{lem:evolutionequations} to differentiate the ratio in time,
\begin{align}\label{eq:dtdl}
\fracp{}{t}\left(\frac dl\right)
&=\frac1l\big(h\langle\bw,\nnu_q-\nnu_p \rangle
		+\langle\bw,\bkappa_q-\bkappa_p\rangle\big) \notag\\
	&\quad+\frac d{l^2}\left(\int_p^q\kappa^2\,ds_t-h\int_p^q\kappa\,ds_t\right)\,.
\end{align}
We are now considering four different cases.\\

(i) Assume that
\begin{align}\label{eq:wnusinthetadl} 
\langle\bw,\nnu_p\rangle=-\langle\bw,\nnu_q\rangle=-\sin\!\left(\frac\beta2\right)\quad\text{ and }\quad
\beta=\theta\in(0,\pi)\,.
\end{align}
Adding~\eqref{eq:wtaupdl} to~\eqref{eq:wtauqdl} yields $\langle\bw,\btau_q-\btau_p\rangle=0$.
For unit tangent vectors, we have
$$\langle\btau_p+\btau_q,\btau_q-\btau_p\rangle=\Vert\btau_p\Vert^2-\Vert\btau_q\Vert^2=0\,.$$
Thus, $\bw$ and $\btau_p+\btau_q$ are both perpendicular to $\btau_q-\btau_p$ and are therefore parallel, that is, $\measuredangle(\bw,\btau_q+\btau_p)=0$.
Using $\Vert\bw\Vert=1$, we calculate
\begin{align}\label{eq:wtautaupplustauqdl}
0\leq2\frac dl
=\langle\bw,\btau_q+\btau_p\rangle 
=\Vert\btau_q+\btau_p\Vert\,.
\end{align}
By~\eqref{eq:wnusinthetadl},
\begin{align}\label{eq:wkappasinthetadl}
\langle\bw,\bkappa_q-\bkappa_p\rangle
=-\kappa_q\langle\bw,\nnu_q\rangle+\kappa_p\langle\bw,\nnu_p\rangle
=-(\kappa_p+\kappa_q)\sin\!\left(\frac\theta2\right)\,.
\end{align} 
We differentiate $d/l$ at $(p,q,t_0)$ twice with respect to the vector $\bxi=\btau_p\ominus\btau_q$ and calculate with~\eqref{eq:wtautaupplustauqdl} and~\eqref{eq:wkappasinthetadl},
\begin{align*}
0&\leq(\btau_p\ominus \btau_q)^2\left(\frac dl\right) 
=\frac1l\langle\bw,\bkappa_q-\bkappa_p\rangle 
=-\frac1l(\kappa_p+\kappa_q)\sin\!\left(\frac\theta2\right)\,.
\end{align*}
We abbreviate $\kappa:=(\kappa_p+\kappa_q)/2$ and obtain
\begin{align*}
2\kappa\sin\!\left(\frac\theta2\right)\leq0\,.
\end{align*}
Since $\sin(\theta/2)>0$ for $\theta\in(0,\pi)$, we conclude $\kappa\leq0$ and
$$h-\kappa\geq h>h-\frac\theta l\,.$$
Furthermore, the inequality 
$$\sin\!\left(\frac\theta2\right)>\frac\theta2\cos\!\left(\frac\theta2\right)>0$$
holds for $\theta\in(0,\pi)$. 
Hence, 
\begin{align}\label{eq:proofdtdlcase2a2}
(h-\kappa)\sin\!\left(\frac\theta2\right)>\frac\theta2\cos\!\left(\frac\theta2\right)\left(h-\frac\theta l\right)
\end{align}
for $\theta\in(0,\pi)$.
Cauchy--Schwarz and the definition~\eqref{eq:deftheta} of $\theta$ imply
\begin{align}\label{eq:CSthetadl}
\int_p^q\kappa^2\,ds_t
\geq\frac1l\left(\int_p^q\kappa\,ds_t\right)^{2}
=\frac{\theta^2}l\,.
\end{align}
Then~\eqref{eq:dtdl},~\eqref{eq:wtaucosthetadl},~\eqref{eq:wnusinthetadl},~\eqref{eq:wkappasinthetadl},~\eqref{eq:proofdtdlcase2a2} and~\eqref{eq:CSthetadl} yield
\begin{align*}
\fracp{}{t}\left(\frac dl\right)
&\geq\frac2l\left((h-\kappa)\sin\!\left(\frac\theta2\right)
		-\frac\theta2\cos\!\left(\frac\theta2\right)\left(h-\frac\theta l\right)\right)
>0
\end{align*}
at $(p,q,t_0)$. \\

(ii) Assume that $\langle\bw,\nnu_p\rangle=\langle\bw,\nnu_q\rangle=\pm\sin(\beta/2)$ and $\theta=0$.
By~\eqref{eq:wtaucosthetadl},
\begin{align}\label{eq:wnusinthetadl_0} 
\btau_p=\btau_q\qquad\text{ and }\qquad\nnu_p=\nnu_q\,.
\end{align}
We differentiate $d/l$ at $(p,q,t_0)$ twice with respect to the vector $\bxi=\btau_p\oplus\btau_q$ and calculate with~\eqref{eq:wnusinthetadl_0},
\begin{align}\label{eq:wtautaupplustauqdl_0}
0&\leq(\btau_p\oplus\btau_q)^2\left(\frac dl\right) 
=\frac1l\langle\bw,\bkappa_q-\bkappa_p\rangle\,.
\end{align}
We conclude with~\eqref{eq:deftheta},~\eqref{eq:dtdl},~\eqref{eq:wnusinthetadl_0},~\eqref{eq:wtautaupplustauqdl_0},
\begin{align*}
\fracp{}{t}\left(\frac dl\right)
\geq\frac d{l^2}\int_p^q\kappa^2\,ds_t
>0
\end{align*}
at $(p,q,t_0)$. \\

(iii) Assume that $\langle\bw,\nnu_p\rangle=-\langle\bw,\nnu_q\rangle=\sin(\beta/2)$ and $-\beta=\theta\in(-\pi,0)$. 
Again by continuity of $d/l$, there exists $t_1(\Si_0)\in(0,T')$ so that
\begin{align}\label{eq:Cstar1dl}
\inf_{\R\times\R}\frac dl(\,\cdot\,,\,\cdot\,,t)\geq\frac12\inf_{\R\times\R}\frac dl(\,\cdot\,,\,\cdot\,,0)
\end{align}
for all $t\in[0,t_1]$.
If $t_0\in(t_1,T)$, Theorem~\ref{thm:dtdstheta} and~\eqref{eq:thetainteral2dl} yield $-\pi<\theta_{\inf}(t_1)\leq\theta_{\inf}(t_0)<0$ so that~\eqref{eq:wtaucosthetadl} and the monotone behaviour of the cosine on $(-\pi,0)$ imply
\begin{align}\label{eq:Cstar2dl}
\frac dl=\cos\!\left(\frac\theta2\right)
\geq\cos\!\left(\frac{\theta_{\inf}(t_0)}2\right)
\geq\cos\!\left(\frac{\theta_{\inf}(t_1)}2\right)>0\,.
\end{align}
We deduce with~\eqref{eq:Cstar1dl} and~\eqref{eq:Cstar2dl} that
\begin{align}\label{eq:c2}
\frac dl\geq
\min\left\{\frac12\inf_{\R\times\R}\frac dl(\,\cdot\,,\,\cdot\,,0),\cos\!\left(\frac{\theta_{\inf}(t_1)}2\right)\right\}=:c_2(\Si_0)>0
\end{align}
at $(p,q,t_0)$.\\

(iv) Assume that $\theta\in(\pi,\alpha+2\pi)$.
By~\eqref{eq:wtaucosthetadl}, $\langle\bw,\btau_q\rangle=\langle\bw,\btau_p\rangle=d/l\in(0,1)$.
Since $\Si_{t_0}$ is embedded with ends going to $\infty$ and $X(\,\cdot\,,t_0)$ is continuous, the curve has to cross the line segment between $X(p,t_0)$ and $X(q,t_0)$ at least once at $X(r,t_0)$ with $r\notin[p,q]$.
Lemma~\ref{lem:mindltheta2pik} implies that $d/l$ cannot attain the infimum at $(p,q,t_0)$ (it could still, however, attain a local minimum a this point).
Hence,
$$\frac dl>\inf_{\R\times\R}\frac dl(\,\cdot\,,\,\cdot\,,t_0)\,,$$
where $\inf_{\R\times\R}d/l(\,\cdot\,,\,\cdot\,,t_0)$ is either the infimum from~\eqref{eq:limdl} or a local minimum as discussed in cases~(i),~(ii) and~(iii). \\

Assume that $d/l$ falls below $c:=\min\{c_1,c_2\}$, where $c_1$ and $c_2$ are given in~\eqref{eq:limdl} and~\eqref{eq:c2}, and attains $\Lambda\in(0,c)$ for the first time at time $t_2\in(0,T)$ and points $p,q\in\Sp$, $p\ne q$, so that
\begin{align}\label{eq:c1mindl}
c>\Lambda=\frac dl(p,q,t_2)=\inf_{\R\times\R}\frac dl(\,\cdot\,,\,\cdot\,,t_2)
\end{align}
is the infimum and
\begin{align}\label{eq:dtdlleq0}
\fracp{}{t}_{|_{t=t_2}}\!\left(\frac dl\right)(p,q,t)\leq0\,.
\end{align}
Cases~(i) and~(ii) contradict~\eqref{eq:dtdlleq0}, and cases~(iii) and~(iv) contradict~\eqref{eq:c1mindl}. 
\end{proof}

\begin{Cor}
\label{cor:embeddednessdl}
Let $\Si_0=X_0(\R)$ be a smooth, embedded curve satisfying~\eqref{eq:alpha} and~\eqref{eq:intkappageqminuspi}.
Let $X:\R\times[0,T)\to\R^2$ be a solution of~\eqref{eq:ccf} satisfying~\eqref{eq:h_1}, and with initial curve $\Si_0$.
Then $\St=X(\R,t)$ is embedded for all $t\in(0,T)$.
\end{Cor}

\begin{Rem}
Counterexample~\ref{ex:cexample} shows that in order for embeddedness to be preserved it is crucial to assume that the initial local total curvature lies above $-\pi$. 
\end{Rem}

\section{Distance comparison principle for closed curves}\label{sec:noncollapsing}

We continue to adapt the methods from Huisken~\cite{Huisken95}.
Let $X(\Sp)$ be a circle of radius $R$.
Then 
$$\frac{d(p,q)}2
=\frac{L}{2\pi}\sin\!\left(\frac{\pi l(p,q)}{L}\right)$$
for all $p,q\in\Sp$.
This motivates the definition of the function $\psi:\Sp\!\times\Sp\!\times[0,T)\to\R$ with
\begin{align}\label{eq:defpsi}
\psi(p,q,t):=\frac{L_t}\pi\sin\!\left(\frac{\pi l(p,q,t)}{L_t}\right)\,,
\end{align}
where $L_t<\infty$.
We set $d/\psi\equiv1$ on $\{p=q\}\times[0,T)$, then $d/\psi\in C^0(\Sp\!\times\Sp\!\times[0,T))$.

\begin{Rem}\label{rem:dpsi}
Since $\sin(\pi-\alpha)=\sin(\alpha)$,
we have $\psi(p,q,t)=\psi(q,p,t)$.
Hence, we will later assume that $l\leq L/2$.
Embedded curves satisfy $d/\psi>0$.
If a closed curve $\St$ is not a circle, then there exist $p,q\in\Sp$ so that $d(p,q,t)<\psi(p,q,t)$ and thus $\min_{\Sp\!\times\Sp}(d/\psi)<1$. 
\end{Rem}

\begin{Lemma}
\label{lem:mindpsitheta2pik}
Let $\Si=X(\Sp)$ be an embedded, closed curve.
Let $p,q\in\Sp$, $p\ne q$, such that $\Si$ crosses the connecting line between $X(p)$ and $X(q)$.
Then $(d/\psi)(p,q)$ cannot be a global minimum.
\end{Lemma}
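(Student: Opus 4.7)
The approach mirrors Lemma~\ref{lem:mindltheta2pik}, with the intrinsic length $l$ replaced by $\psi$. The trivial additivity of $l$ along a straight segment used there is replaced by a strict subadditivity of $\psi$ for triples of distinct points on $\Sp$.

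I would first pick $r\in\Sp\setminus\{p,q\}$ with $X(r)$ lying strictly between $X(p)$ and $X(q)$ on the segment, as guaranteed by hypothesis. Colinearity of $X(p)$, $X(r)$, $X(q)$ gives
$$d(p,q)=d(p,r)+d(r,q),\qquad d(p,r),d(r,q)>0.$$
The three distinct parameters $p,q,r$ partition $\Sp$ into three arcs of strictly positive lengths $u,v,w$ with $u+v+w=L$, where $w$ is the arc from $p$ to $q$ not containing $r$, $v$ the arc from $p$ to $r$ not containing $q$, and $u$ the arc from $q$ to $r$ not containing $p$.

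Next I would establish the key subadditivity $\psi(p,q)<\psi(p,r)+\psi(r,q)$. By Remark~\ref{rem:dpsi} and the identity $\sin(\pi-\,\cdot\,)=\sin(\,\cdot\,)$, the three values satisfy
$$\psi(p,q)=\tfrac{L}{\pi}\sin\!\left(\tfrac{\pi w}{L}\right),\quad \psi(p,r)=\tfrac{L}{\pi}\sin\!\left(\tfrac{\pi v}{L}\right),\quad \psi(r,q)=\tfrac{L}{\pi}\sin\!\left(\tfrac{\pi u}{L}\right),$$
regardless of whether the listed arc is the shorter or the longer one. Writing $x:=\pi u/L$, $y:=\pi v/L$, one has $\pi w/L=\pi-x-y$, hence $\sin(\pi w/L)=\sin(x+y)$, and the claim reduces to
$$\sin(x+y)<\sin(x)+\sin(y)\qquad\text{for }x,y>0,\ x+y<\pi,$$
which follows at once from the elementary identity
$$\sin(x)+\sin(y)-\sin(x+y)=\sin(x)(1-\cos y)+\sin(y)(1-\cos x)>0.$$

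Setting $\Lambda:=(d/\psi)(p,q)$, a standard contradiction finishes the argument: if $\Lambda$ were a global minimum of $d/\psi$ on $\Sp\times\Sp$, then $d(p,r)\geq\Lambda\psi(p,r)$ and $d(r,q)\geq\Lambda\psi(r,q)$, whence
$$d(p,q)=d(p,r)+d(r,q)\geq\Lambda[\psi(p,r)+\psi(r,q)]>\Lambda\psi(p,q)=d(p,q),$$
an absurdity. The only real obstacle is identifying the right subadditivity for $\psi$; once the three-arc decomposition $u+v+w=L$ is in place and Remark~\ref{rem:dpsi} is used to absorb the orientation ambiguity in each $\psi$-value, the rest is a one-line trigonometric computation.
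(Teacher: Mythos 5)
Your proof is correct and takes essentially the same route as the paper's: colinearity gives $d(p,q)=d(p,r)+d(r,q)$, strict subadditivity of $\psi$ over the triple $p,r,q$, and the standard averaging contradiction against global minimality. Your three-arc decomposition with the identity $\sin x+\sin y-\sin(x+y)=\sin x(1-\cos y)+\sin y(1-\cos x)$ simply makes explicit the subadditivity step that the paper attributes to ``the properties of the sine function.''
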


\begin{proof}
Let $\Si=X(\Sp)$ be an embedded, closed curve that crosses the connecting line between $X(p)$ and $X(q)$.
That is, there exists an $r\in\Sp$, $r\ne p,q$, with $X(r)=X(p)+\bw(p,q)\Vert X(r)-X(p)\Vert$.
Set $d:=d(p,q)$, $d_1:=d(p,r)$ and $d_2:=d(r,q)$.
Then
\begin{align}\label{eq:d1plusd2}
d=d_1+d_2\,.
\end{align}
Furthermore, set $l:=l(p,q)$, $l_1:=l(p,r)$ and $l_2:=l(r,q)$
and $\psi:=\psi(p,q)$, $\psi_1:=\psi(p,r)$ and $\psi_2:=\psi(r,q)$ and assume that $d/\psi$ attains its global minimum at $(p,q)$.
We parametrise $\Si$ by arc length, so that $s(p)=0$.
Then we have either $0=s(p)<s(r)<s(q)$ with $l=l_1+l_2$ and 
$$\frac\pi{L}\psi
=\sin\!\left(\frac{\pi l}{L}\right)
<\sin\!\left(\frac{\pi l_1}{L}\right)+\sin\!\left(\frac{\pi l_2}{L}\right)
=\frac\pi{L}\psi_1+\frac\pi{L}\psi_2\,,$$
or we have $0=s(p)<s(q)<s(r)$ with $l=L-(l_1+l_2)$ and likewise
\begin{align*}
\frac\pi{L}\psi
=\sin\!\left(\frac{\pi l}{L}\right)
=\sin\!\left(\frac{\pi L}{L}-\frac{\pi(l_1+l_2)}{L}\right) 
<\sin\!\left(\frac{\pi l_1}{L}\right)+\sin\!\left(\frac{\pi l_2}{L}\right) 
=\frac\pi{L}\psi_1+\frac\pi{L}\psi_2\,.
\end{align*} 
Since $d/\psi$ is a global minimum, we can estimate with~\eqref{eq:d1plusd2} and the above,
$$\frac{d_1}{\psi_1}\geq\frac d\psi>\frac{d_1+d_2}{\psi_1+\psi_2}
\qquad\text{ and }\qquad
\frac{d_2}{\psi_2}\geq\frac d\psi>\frac{d_1+d_2}{\psi_1+\psi_2}$$
so that 
$$d_1(\psi_1+\psi_2)>(d_1+d_2)\psi_1
\qquad\text{ and }\qquad
d_2(\psi_1+\psi_2)>(d_1+d_2)\psi_2\,.$$
Adding both inequalities yields a contradiction.
Thus, $d/\psi$ cannot be a global minimum.
\end{proof}

Now we can prove a similar result to~\cite[Thm.~2.3]{Huisken95}.

\begin{Thm}
\label{thm:minimumdpsi}
Let $\Si_0=X_0(\Sp)$ be a smooth, embedded curve satisfying~\eqref{eq:intkappageqminuspi}. 
Let $X:\Sp\!\times[0,T)\to\R^2$ be a solution of~\eqref{eq:ccf} satisfying~\eqref{eq:h_1} and with initial curve $\Si_0$.
Then there exists a constant $c(\Si_0)>0$ such that
$$\inf_{(p,q,t)\in\Sp\!\times\Sp\!\times[0,T)}\frac d\psi(p,q,t)\geq c\,.$$
\end{Thm}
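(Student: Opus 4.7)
The strategy mirrors that of Theorem~\ref{thm:minimumdl}, with $\psi$ in place of $l$ and Lemma~\ref{lem:mindpsitheta2pik} replacing Lemma~\ref{lem:mindltheta2pik}. I first exploit the symmetry $\psi(p,q,t)=\psi(q,p,t)$ together with $l(p,q,t)+l(q,p,t)=L_t$ from Remark~\ref{rem:dpsi} to restrict attention to pairs with $l\leq L_t/2$. Since $d/\psi$ is continuous on $\Sp\!\times\!\Sp\!\times[0,T)$, equals $1$ on the diagonal, and is initially positive, continuity yields $t_1(\Si_0)\in(0,T)$ such that $d/\psi$ stays above half its initial infimum on $[0,t_1]$. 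The goal is a contradiction argument: if $d/\psi$ first drops below a suitable positive constant at time $t_2>t_1$ at some pair $(p,q)$ with $p\ne q$, this must be a global spatial minimum with $\partial_t(d/\psi)|_{t_2}\leq 0$.

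At such a critical point the first tangential derivatives along $\btau_p\oplus 0$ and $0\oplus\btau_q$ yield $\langle\bw,\btau_p\rangle=\langle\bw,\btau_q\rangle=(d/\psi)\cos(\pi l/L_t)\in[0,1)$, so this common value equals $\cos(\beta/2)$ for some $\beta\in(0,\pi]$. Lemma~\ref{lem:thetabeta} then forces $\theta(p,q,t_2)\in 2\pi\Z\pm\beta$ or $\theta\in 2\pi\Z$. Combining Lemma~\ref{lem:minmaxtheta}, the strong maximum principle Theorem~\ref{thm:dtdstheta}~(i), and the hypothesis~\eqref{eq:intkappageqminuspi}, I conclude $\theta(p,q,t)\in(-\pi,3\pi)$ for all $t\in(0,T)$, so only the four ranges $\theta=0$, $\theta\in(0,\pi)$, $\theta\in(-\pi,0)$, and $\theta\in(\pi,3\pi)$ need be considered.

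I then treat the four cases in parallel with the proof of Theorem~\ref{thm:minimumdl}:
\begin{enumerate}[(i)]
\item For $\theta\in(0,\pi)$, a second derivative of $d/\psi$ along $\btau_p\ominus\btau_q$ controls the mean curvature $(\kappa_p+\kappa_q)/2$, and combining this with Lemma~\ref{lem:evolutionequations}, the length derivative from Lemma~\ref{lem:dtL}, the Cauchy--Schwarz bound $\int_p^q\kappa^2\,ds_t\geq\theta^2/l$, and the inequality $\sin(\theta/2)>(\theta/2)\cos(\theta/2)$, yields $\partial_t(d/\psi)>0$.
\item For $\theta=0$ the tangents and normals coincide, the second derivative along $\btau_p\oplus\btau_q$ absorbs the forcing term, and the residual $\int_p^q\kappa^2\,ds_t$ gives $\partial_t(d/\psi)>0$.
\item For $\theta\in(-\pi,0)$ one has $d/\psi\geq\cos(\theta/2)$ via the critical-point relation and $\cos(\pi l/L)\in[0,1]$, and Theorem~\ref{thm:dtdstheta}~(i) provides $\theta_{\min}(t_2)\geq\theta_{\min}(t_1)>-\pi$, yielding a positive lower bound inconsistent with $d/\psi$ being below the threshold.
\item For $\theta\in(\pi,3\pi)$ the tangent configuration forces the chord from $X(p,t_2)$ to $X(q,t_2)$ to be crossed by $\St$ at some other point, and Lemma~\ref{lem:mindpsitheta2pik} excludes $(p,q)$ as a global minimum.
\end{enumerate}

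The main obstacle is case~(i): because $\psi$ depends on $L_t$, the time derivative of $d/\psi$ now carries additional contributions proportional to $\dot L_t=2\pi h-\int_{\St}\kappa^2\,ds_t$ on top of the direct forcing $h\langle\bw,\nnu_q-\nnu_p\rangle$. The upper bound $h(t)\leq\frac1{2\pi}\int_{\St}\kappa^2\,ds_t+2\pi/L_t$ in~\eqref{eq:h_1} is engineered precisely so that these contributions balance against the positive term produced by the geometry at the critical point; pushing through the resulting inequality, intertwined with the $\sin(\pi l/L)$ and $\cos(\pi l/L)$ factors arising from $\psi$, is where the bulk of the computation resides.
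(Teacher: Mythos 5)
Your overall architecture is the paper's: restrict to $l\leq L_t/2$ by the symmetry of $\psi$, classify $\theta$ at a spatial minimum via Lemma~\ref{lem:thetabeta} together with the range $\theta\in(-\pi,3\pi)$ from Lemma~\ref{lem:minmaxtheta} and Theorem~\ref{thm:dtdstheta}, run the monotonicity computation for small positive $\theta$, use the strict increase of $\theta_{\min}$ for $\theta\in(-\pi,0)$, and invoke the chord-crossing Lemma~\ref{lem:mindpsitheta2pik} for large $\theta$. Your cases (i), (iii), (iv) are exactly the paper's cases (i)--(iii), and the ingredients you list for (i) --- the second-derivative bound on $(\kappa_p+\kappa_q)/2$, Cauchy--Schwarz, $\sin(\theta/2)>(\theta/2)\cos(\theta/2)$, and the upper bound in~\eqref{eq:h_1} --- are precisely what the paper combines (via the intermediate facts $\theta>2\pi l/L$ and $\kappa<2\pi/L\leq\frac1{2\pi}\intst\kappa^2\,ds_t$) to get $\partial_t(d/\psi)>0$.

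The genuine gap is your case (ii), $\theta=0$. You transplant the noncompact argument, where $\partial_t(d/l)$ reduces at such a point to $\frac1l\langle\bw,\bkappa_q-\bkappa_p\rangle+\frac d{l^2}\int_p^q\kappa^2\,ds_t>0$. For closed curves $\psi$ depends on $L_t$, so $\partial_t(d/\psi)$ carries the extra term
$$\frac d{\pi\psi^2}\left(\intst\kappa^2\,ds_t-2\pi h\right)\left(\sin\!\left(\frac{\pi l}{L}\right)-\frac{\pi l}{L}\cos\!\left(\frac{\pi l}{L}\right)\right),$$
in which $\sin(x)-x\cos(x)>0$ on $(0,\pi/2]$ while, under~\eqref{eq:h_1}, $\intst\kappa^2\,ds_t-2\pi h$ may be as negative as $-4\pi^2/L$. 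The ``residual'' $\frac d{\psi^2}\cos(\pi l/L)\int_p^q\kappa^2\,ds_t$ does not dominate it: with $\theta=0$, Cauchy--Schwarz gives only $\int_p^q\kappa^2\,ds_t\geq0$, there is no a priori lower bound on the local $L^2$-norm of $\kappa$, and $\cos(\pi l/L)$ vanishes when $l=L/2$. So the sign of $\partial_t(d/\psi)$ is undetermined here. The paper avoids this entirely by putting $\theta=0$ into the crossing case: since $\langle\bw,\btau_p\rangle=\langle\bw,\btau_q\rangle\in[0,1)$ and the curve is closed, it must cross the segment between $X(p,t_0)$ and $X(q,t_0)$, and Lemma~\ref{lem:mindpsitheta2pik} rules out a global minimum --- i.e.\ $\theta=0$ belongs with your case (iv). Relatedly, your list of ranges omits $\theta=\pi$, which (unlike in Theorem~\ref{thm:minimumdl}) can occur because $\cos(\beta/2)=(d/\psi)\cos(\pi l/L)$ may equal $0$, so $\beta=\pi$ is attainable; the paper assigns $\theta=\pi$ to the computational case when $\langle\bw,\nnu_p\rangle=-\langle\bw,\nnu_q\rangle=-1$ and to the crossing case when $\langle\bw,\nnu_p\rangle=-\langle\bw,\nnu_q\rangle=+1$.
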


\begin{proof}
Let $\Si_0=X_0(\Sp)$ be an embedded closed curve satisfying~\eqref{eq:intkappageqminuspi}.
Lemma~\ref{lem:minmaxtheta} implies that $\theta_0\in[-\pi,3\pi]$.
From the maximum principle for $\theta$, Theorem~\ref{thm:dtdstheta}, it follows that
\begin{align}\label{eq:thetainteral2}
\theta(p,q,t)\in(-\pi,3\pi)
\end{align}
for all $p,q\in\Sp$ and $t\in(0,T)$. 
Since $d/\psi$ is continuous and initially positive, there exists a time $T'\in(0,T]$ so that $d/\psi>0$ on $[0,T')$.
Fix $t_0\in(0,T')$.
If $\Si_{t_0}$ is a circle, then Remark~\ref{rem:dpsi} yields that $d/\psi\equiv1$ on $\Sp\!\times\Sp$.
Assume that $\Si_{t_0}$ is not a circle so that $\min_{\Sp\!\times\Sp}(d/\psi)<1$ at $t_0$.
Let $p,q\in\Sp$, $p\ne q$, be points where a local spatial minimum of $d/\psi$ at $t_0$ is attained and assume w.l.o.g.\ that $s(p,t_0)<s(q,t_0)$.
Again by Remark~\ref{rem:dpsi}, we can assume that $l(p,q,t_0)\leq L_{t_0}/2$.
We have for all $\bxi\in T_{X(p,t_0)}\Si_{t_0}\bigoplus T_{X(q,t_0)}\Si_{t_0}$,
$$0<\frac d\psi(p,q,t_0)<1\,,\quad
\bxi\!\left(\frac d\psi\right)\!(p,q,t_0)=0
\quad\text{ and }\quad
\bxi^2\!\left(\frac d\psi\right)\!(p,q,t_0)\geq0\,.$$
In the following, we always calculate at the point $(p,q,t_0)$.
The spatial derivatives of $d$ and $\psi$ are all given in~\cite{Huisken95} (for detailed calculations, see~\cite[Cor.~7.12 and Thm.~7.21]{Dittberner18}).
The first spatial derivative of $d/\psi$ at $(p,t_0)$ in direction of the vector $\bxi=\btau_p\oplus0$ is given by
\begin{align}\label{eq:wtaup}
0=(\btau_p\oplus0)\!\left(\frac d\psi\right)
=-\frac1\psi\langle\bw,\btau_p\rangle+\frac d{\psi^2}\cos\!\left(\frac{\pi l}{L}\right)\,.
\end{align}
At $(q,t_0)$ and for the vector $\bxi=0\oplus\btau_q$, we have
\begin{align}\label{eq:wtauq}
0=(0\oplus\btau_q)\!\left(\frac d\psi\right) 
=\frac1\psi\langle\bw,\btau_q\rangle-\frac d{\psi^2}\cos\!\left(\frac{\pi l}{L}\right)\,.
\end{align}
Since $l\in(0,L/2]$ and $d/\psi\in(0,1)$, and by~\eqref{eq:wtaup} and~\eqref{eq:wtauq}, there exists $\beta\in(0,\pi]$ with
\begin{align}\label{eq:wtaucostheta}
\langle\bw,\btau_p\rangle=\langle\bw,\btau_q\rangle
=\frac d\psi\cos\!\left(\frac{\pi l}{L}\right)
=\cos\!\left(\frac\beta2\right)
\in[0,1)\,.
\end{align}
By Lemma~\ref{lem:thetabeta} and~\eqref{eq:thetainteral2}, either
\begin{align*}
\langle\bw,\nnu_p\rangle=-\langle\bw,\nnu_q\rangle=-\sin\!\left(\frac\beta2\right)&\quad\text{ and }\quad
2\pi k+\beta=\theta\in(0,\pi]\cup(2\pi,3\pi)\,, \\
\langle\bw,\nnu_p\rangle=-\langle\bw,\nnu_q\rangle=\sin\!\left(\frac\beta2\right)&\quad\text{ and }\quad
2\pi k-\beta=\theta\in(-\pi,0)\cup[\pi,2\pi)\,,\quad\text{ or } \\
\langle\bw,\nnu_p\rangle=\langle\bw,\nnu_q\rangle=\pm\sin\!\left(\frac\beta2\right)&\quad\text{ and }\quad
\theta\in\{0,2\pi\}
\end{align*}
for $k\in\Z$.
We are now considering three different cases.\\

(i) Assume that
\begin{align}\label{eq:wnusintheta} 
\langle\bw,\nnu_p\rangle=-\langle\bw,\nnu_q\rangle=-\sin\!\left(\frac\beta2\right)\quad\text{ and }\quad
\beta=\theta\in(0,\pi]\,.
\end{align}
By~\eqref{eq:wtaucostheta}, also
$$\langle\bw,\nnu_p\rangle=-\langle\bw,\nnu_q\rangle=-\frac d\psi\sin\!\left(\frac{\pi l}{L}\right)$$
so that
\begin{align}\label{eq:sinthetasinpilL}
2\sin\!\left(\frac\theta2\right)
=\langle\bw,\nnu_q-\nnu_p\rangle
=2\frac d\psi\sin\!\left(\frac{\pi l}{L}\right)\,.
\end{align}
Like in the proof of Theorem~\ref{thm:minimumdl},
$\measuredangle(\bw,\btau_q+\btau_p)=0$.
Using $\Vert\bw\Vert=1$ and~\eqref{eq:wtaucostheta}, we calculate
\begin{align}\label{eq:wtautaupplustauq}
0\leq2\frac d\psi\cos\!\left(\frac{\pi l}{L}\right)
=\langle\bw,\btau_q+\btau_p\rangle 
=\Vert\btau_q+\btau_p\Vert\,.
\end{align}
Since $d/\psi<1$, and again by~\eqref{eq:wtaucostheta},
\begin{align}\label{eq:costhetacosdpsi}
\cos\!\left(\frac\theta2\right)
=\frac d\psi\cos\!\left(\frac{\pi l}{L}\right)
<\cos\!\left(\frac{\pi l}{L}\right)\,.
\end{align}
As $\pi l/\!L\in(0,\pi/2]$ and the cosine function is axially symmetric and monotonically decreasing on $(0,\pi/2]$,~\eqref{eq:costhetacosdpsi} implies
\begin{align}\label{eq:thetalR}
\theta>\frac{2\pi l}{L}\,.
\end{align}
By~\eqref{eq:wnusintheta},
\begin{align}\label{eq:wkappasintheta}
\langle\bw,\bkappa_q-\bkappa_p\rangle
=-\kappa_q\langle\bw,\nnu_q\rangle+\kappa_p\langle\bw,\nnu_p\rangle
=-(\kappa_p+\kappa_q)\sin\!\left(\frac\theta2\right)\,.
\end{align} 
We differentiate $d/\psi$ at $(p,q,t_0)$ twice with respect to the vector $\bxi=\btau_p\ominus\btau_q$ and calculate, using the definition~\eqref{eq:defpsi} of $\psi$,~\eqref{eq:wtautaupplustauq} and~\eqref{eq:wkappasintheta},
\begin{align*}
0&\leq (\btau_p\ominus\btau_q)^2\!\left(\frac d\psi\right) 
=-\frac1\psi(\kappa_p+\kappa_q)\sin\!\left(\frac\theta2\right)
		+\frac{4\pi^2d}{L^2\psi}\,.
\end{align*}
We abbreviate $\kappa:=(\kappa_p+\kappa_q)/2$ and obtain
\begin{align}\label{eq:kappasinthetadpsi}
2\kappa\sin\!\left(\frac\theta2\right)\leq\frac{4\pi^2}{L^2}d\,.
\end{align}
Since the sine function is positive and monotonically increasing on $(0,\pi/2]$, we conclude with $d/\psi<1$,~\eqref{eq:defpsi},~\eqref{eq:thetalR} and~\eqref{eq:kappasinthetadpsi} that
$$2\kappa\sin\!\left(\frac\theta2\right)
\leq\frac{4\pi^2}{L^2}d
<\frac{4\pi^2}{L^2}\psi
=\frac{4\pi^2}{L^2}\frac{L}\pi\sin\!\left(\frac{\pi l}{L}\right)
<\frac{4\pi}{L}\sin\!\left(\frac\theta2\right)\,.$$
Since $\sin(\theta/2)>0$ for $\theta\in(0,\pi]$, we can divide by it to obtain with Cauchy--Schwarz,
\begin{align}\label{eq:kappaeq1R}
\kappa<\frac{2\pi}{L}\leq\frac1{2\pi}\intst\kappa^2\,ds_t\,.
\end{align}
The definition~\eqref{eq:deftheta} of $\theta$ and~\eqref{eq:thetalR} imply
\begin{align}\label{eq:thetah}
-h\int_p^q\kappa\,ds_t+\int_p^q\kappa^2\,ds_t
\geq-h\theta+\frac{\theta^2}l
>-\theta\left(h-\frac{2\pi}{L}\right).
\end{align}
We use the evolution equation~\eqref{eq:ccf} and Lemma~\ref{lem:evolutionequations} to differentiate the ratio in time and obtain by~\eqref{eq:sinthetasinpilL},~\eqref{eq:costhetacosdpsi},~\eqref{eq:wkappasintheta} and~\eqref{eq:thetah},
\begin{align}\label{eq:dtdpsi}
&\fracp{}{t}\!\left(\frac d\psi\right) \notag\\
&\;=\frac1\psi\big(h\langle\bw,\nnu_q-\nnu_p\rangle+\langle\bw,\bkappa_q-\bkappa_p\rangle\big) 
		-\frac d{\psi^2}\cos\!\left(\frac{\pi l}{L}\right)
		\left(h\int_p^q\kappa\,ds_t-\int_p^q\kappa^2\,ds_t\right) \notag\\
	&\;\quad+\frac d{\pi\psi^2}\left(\intst\kappa^2\,ds_t-2\pi h\right)
			\left(\sin\!\left(\frac{\pi l}{L}\right)-\frac{\pi l}{L}\cos\!\left(\frac{\pi l}{L}\right)\right)\notag\\
&\;\geq\frac2\psi(h-\kappa)\sin\!\left(\frac\theta2\right) 
		-\frac\theta\psi\cos\!\left(\frac\theta2\right)\left(h-\frac{\theta}{l}\right)\notag\\
	&\;\quad+\frac1\psi\left(\frac1{\pi}\intst\kappa^2\,ds_t-2h\right)
			\left(\sin\!\left(\frac\theta2\right)-\frac{\pi l}L\cos\!\left(\frac\theta2\right)\right) \notag\\
&\;=\frac2\psi\left(\frac1{2\pi}\intst\kappa^2\,ds_t-\kappa\right)\left(\sin\!\left(\frac\theta2\right)
		-\frac\theta2\cos\!\left(\frac\theta2\right)\right) \notag\\
	&\;\quad+\frac\theta\psi\cos\!\left(\frac\theta2\right)
		\left(\frac1{2\pi}\intst\kappa^2\,ds_t-\kappa
			-h+\frac{\theta}{l}-\frac l{L\theta}\intst\kappa^2\,ds_t+\frac{h2\pi l}{L\theta}\right)\,,
\end{align}
where we just added a zero in the last step.
Since $h$ satisfies~\eqref{eq:h_1}, that is, $h\leq\intst\kappa^2\,ds_t/2\pi+2\pi/L$, we estimate with~\eqref{eq:thetalR} and~\eqref{eq:kappaeq1R},
\begin{align}\label{eq:dpsiA}
&\frac1{2\pi}\intst\kappa^2\,ds_t-\kappa
	-h+\frac{\theta}{l}-\frac l{L\theta}\intst\kappa^2\,ds_t+\frac{h2\pi l}{L\theta} \notag\\
&\quad>\left(\frac1{2\pi}\intst\kappa^2\,ds_t-h\right)\left(1-\frac{2\pi l}{L\theta}\right)
	+\frac\theta l\left(1-\frac{2\pi l}{L\theta}\right) \notag\\
&\quad>\left(\frac1{2\pi}\intst\kappa^2\,ds_t+\frac{2\pi}L-h\right)\left(1-\frac{2\pi l}{L\theta}\right)\geq0\,.
\end{align}
Furthermore, the inequality 
\begin{align}\label{eq:proofdtdpsicase2asintheta}
\sin\!\left(\frac\theta2\right)>\frac\theta2\cos\!\left(\frac\theta2\right)
\end{align}
holds for all $\theta\in(0,\pi]$.
Thus, we conclude with,~\eqref{eq:kappaeq1R},~\eqref{eq:dtdpsi},~\eqref{eq:dpsiA} and~\eqref{eq:proofdtdpsicase2asintheta} that
$$\fracp{}{t}\!\left(\frac d\psi\right)>0$$
at $(p,q,t_0)$.\\

(ii) Assume that $\langle\bw,\nnu_p\rangle=-\langle\bw,\nnu_q\rangle=\sin(\beta/2)$ and $-\beta=\theta\in(-\pi,0)$.
Again by continuity of $d/\psi$, there exists $t_1(\Si_0)\in(0,T')$ so that 
\begin{align}\label{eq:Cstar1}
\min_{\Sp\!\times\Sp}\frac d\psi(\,\cdot\,,\,\cdot\,,t)\geq\frac12\min_{\Sp\!\times\Sp}\frac d\psi(\,\cdot\,,\,\cdot\,,0)
\end{align}
for all $t\in[0,t_1]$.
For $t_0\in(t_1,T)$,
Theorem~\ref{thm:dtdstheta} applied to the initial time $t_1$ and~\eqref{eq:thetainteral2} yield $-\pi<\theta_{\min}(t_1)<\theta_{\min}(t_0)<0$ so that the monotone behaviour of the cosine on $(-\pi,0)$ implies
\begin{align}\label{eq:costhetatthetat0}
\cos\!\left(\frac{\theta_{\min}(t_0)}2\right)>\cos\!\left(\frac{\theta_{\min}(t_1)}2\right)>0\,.
\end{align}
From $0<l\leq L/2$ it follows that $1>\cos(\pi l/\!L)\geq0$ so that, by~\eqref{eq:wtaucostheta},~\eqref{eq:costhetatthetat0} and again the monotone behaviour of the cosine on $(-\pi,0)$,
\begin{align}\label{eq:Cstar2}
\frac d\psi>\frac d\psi\cos\!\left(\frac{\pi l}{L}\right)=\cos\!\left(\frac\theta2\right)
\geq\cos\!\left(\frac{\theta_{\min}(t_0)}2\right)
>\cos\!\left(\frac{\theta_{\min}(t_1)}2\right)\,.
\end{align}
With~\eqref{eq:Cstar1} and~\eqref{eq:Cstar2}, we deduce that
$$\frac d\psi
\geq\min\left\{\frac12\min_{\Sp\!\times\Sp}\frac d\psi(\,\cdot\,,\,\cdot\,,0),\cos\!\left(\frac{\theta_{\min}(t_1)}2\right)\right\}=:c(\Si_0)>0$$
at $(p,q,t_0)$. \\

(iii) Assume that $\theta\in\{0\}\cup(\pi,3\pi)$ or $\theta=\pi$ and $\langle\bw,\nnu_p\rangle=-\langle\bw,\nnu_q\rangle=1$. 
By~\eqref{eq:wtaucostheta}, $\langle\bw,\btau_q\rangle=\langle\bw,\btau_p\rangle\in[0,1)$.
Since $\Si_{t_0}$ is closed and $X(\,\cdot\,,t_0)$ is continuous, $\Si_{t_0}$ has to cross the straight line segment between $X(p,t_0)$ and $X(q,t_0)$ at least once.
Lemma~\ref{lem:mindpsitheta2pik} implies that the ratio $d/\psi$ cannot have a global minimum at $(p,q,t_0)$ (it could still, however, attain a local minimum a this point).
Hence, 
$$\frac d\psi>\min_{\Sp\!\times\Sp}\frac d\psi(\,\cdot\,,\,\cdot\,,t_0)\,,$$
where $\min_{\Sp\!\times\Sp}(d/\psi)(\,\cdot\,,\,\cdot\,,t_0)$ is attained at a point which was treated in cases~(i) and~(ii). \\

Assume that $d/\psi$ falls below $c$ and attains $\Lambda\in(0,c)$ for the first time at time $t_2\in(0,T)$ and points $p,q\in\Sp$, $p\ne q$, so that
\begin{align}\label{eq:c1mindpsi}
c>\Lambda=\frac d\psi(p,q,t_2)=\min_{\Sp\!\times\Sp}\frac d\psi(\,\cdot\,,\,\cdot\,,t_2)
\end{align}
is a global minimum and
\begin{align}\label{eq:dtdpsileq0}
\fracp{}{t}_{|_{t=t_2}}\!\left(\frac d\psi\right)(p,q,t)\leq0\,.
\end{align}
Case~(i) contradicts~\eqref{eq:dtdpsileq0}, and cases~(ii) and~(iii) contradict~\eqref{eq:c1mindpsi}. 
\end{proof}

\begin{Cor}
\label{cor:embeddedness}
Let $\Si_0=X_0(\Sp)$ be a smooth, embedded curve satisfying~\eqref{eq:intkappageqminuspi}.
Let $X:\Sp\!\times[0,T)\to\R^2$ be a solution of~\eqref{eq:ccf} satisfying~\eqref{eq:h_1} and with initial curve $\Si_0$.
Then $\St=X(\Sp\!,t)$ is embedded for all $t\in(0,T)$.
\end{Cor}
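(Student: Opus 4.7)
The corollary should be an essentially immediate consequence of Theorem~\ref{thm:minimumdpsi}. The plan is as follows: apply Theorem~\ref{thm:minimumdpsi} to obtain a uniform lower bound
$$\frac{d}{\psi}(p,q,t)\geq c(\Si_0)>0\qquad\text{for all }(p,q,t)\in\Sp\!\times\Sp\!\times[0,T),$$
and then translate this into embeddedness of $\St$ by showing that $\psi$ is strictly positive away from the diagonal $\{p=q\}$.

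The first step is to verify that $\psi(p,q,t)>0$ whenever $p\neq q$. From the definition
$$\psi(p,q,t)=\frac{L_t}{\pi}\sin\!\left(\frac{\pi\,l(p,q,t)}{L_t}\right),$$
and the fact that the intrinsic distance along a closed curve satisfies $l(p,q,t)\in(0,L_t)$ for $p\neq q$ (with the convention on $\Sp$ used in Section~\ref{sec:theta2} via $l(p,q,t)+l(q,p,t)=L_t$, we may always pick the smaller arc or just use the convention on $\Sp\!\times\Sp$), we have $\pi l/L_t\in(0,\pi)$ and hence $\psi(p,q,t)>0$. Short-time existence (Theorem~\ref{thm:ste}) guarantees $L_t<\infty$ on $[0,T)$, so $\psi$ is finite and positive off the diagonal.

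The second step combines this with the lower bound from Theorem~\ref{thm:minimumdpsi}:
$$d(p,q,t)=\frac{d}{\psi}(p,q,t)\cdot\psi(p,q,t)\geq c\cdot\psi(p,q,t)>0$$
for every $t\in[0,T)$ and every $p\neq q$ in $\Sp$. Since $d(p,q,t)=\Vert X(q,t)-X(p,t)\Vert_{\R^2}$, this says precisely that $X(\,\cdot\,,t)$ is injective on $\Sp$ for each $t$, i.e., $\St$ is embedded.

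I do not anticipate any genuine obstacle here: the entire analytic content sits in Theorem~\ref{thm:minimumdpsi}, and the corollary is just its geometric rephrasing. The only minor subtlety is to be explicit about the convention for $l$ on $\Sp\!\times\Sp$ so that $\psi$ is manifestly positive for $p\neq q$; this is handled by Remark~\ref{rem:dpsi} and the symmetry $\psi(p,q,t)=\psi(q,p,t)$.
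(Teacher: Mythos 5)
Your argument is correct and is exactly the (implicit) derivation the paper intends: the corollary is stated without proof as an immediate consequence of Theorem~\ref{thm:minimumdpsi}, and your two steps --- positivity of $\psi$ off the diagonal plus the uniform lower bound on $d/\psi$ --- yield $d(p,q,t)>0$ for $p\ne q$, hence injectivity of $X(\,\cdot\,,t)$ and embeddedness of the compact curve $\St$.
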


The next example shows, why the condition $\min\theta_0\geq-\pi$ is sharp.

\begin{Coex}\label{ex:cexample}
Gage~\cite[p.~53]{Gage86} suggested the following counterexample. 
Pihan~\cite[Section~5.4]{Pihan98} gave an incomplete proof for its validity which we will fix here.
If we allow local total curvature smaller than $-\pi$, then there exist counterexamples for any given minimum $\min\theta_0<-\pi$.
For the curve in Figure~\ref{fig:cexample}, 
$\theta_{\min}=\theta(p_1,p_2)<-\pi$.
We will construct a solution of~\eqref{eq:ccf} with embedded initial curve $\Si_0$ that intersects itself in finite time.
Fix $K_0>0$.
Let $\cS$ be the set of all smooth, embedded curves in $\R^2$ that satisfy
\begin{align}\label{eq:FC3alpha0}
\min\theta_0<-\pi\,,\quad
\Vert X_0\Vert_{C^{3,\alpha}(\Sp)}\leq K_0
\quad\text{ and }\quad
L(\Si)=L_0\geq8\pi K_0\,,
\end{align}
where $L_0$ is chosen big enough so that curves like in Figure~\ref{fig:cexample} are in $\cS$.
By the short time existence, see~{\cite[p.~36]{Huisken87}} or~{\cite[Thms.~4.3 and Corollary~4.4]{Pihan98}},
there exists a time $T=T(K_0)$ so that
$$\Vert X\Vert_{C^{3,\alpha;1\lfloor\alpha/2\rfloor}(\Sp\!\times[0,T/2])}\leq K_1(K_0)\,.$$
In particular,
\begin{align}\label{eq:F1K1alpha}
\left|\fracp{X^1}{t}(p,t)-\fracp{X^1}{t}(p,0)\right|\leq K_1t^{\alpha/2}\,,
\end{align}
where $X^1:=\langle X,\e_1\rangle$, and, by~\eqref{eq:ccf} and~\eqref{eq:F1K1alpha},
\begin{align}\label{eq:F1K1}
-K_1t^{\alpha/2}
\leq(h(t)-\kappa(p,t))\nnu^1(p,t)-(h(0)-\kappa(p,0))\nnu^1(p,0)
\leq K_1t^{\alpha/2}
\end{align}
for all $p\in\Sp$ and for all $t\in[0,T/2]$, where $\nnu^1:=\langle\nnu,\e_1\rangle$.
Assume $h(0)>0$ and set
\begin{align}\label{eq:t1}
t_1=t_1(K_0):=\min\left\{\frac T2,\left(\frac{h(0)}{2K_1}\right)^{-\alpha/2}\right\}\,.
\end{align}
Then~\eqref{eq:F1K1} holds for $t\in[0,t_1]$.
Let $\Si\in\cS$ be a curve like in Figure~\ref{fig:cexample}, which is symmetric about the $x_2$-axis.
Let $p,q\in\Sp$ be located as in the picture so that
\begin{align}\label{eq:nukappa0}
\nnu(p,0)=-\nnu(q,0)=-\e_1\,\qquad\text{ and }\qquad
\kappa(p,0)=\kappa(q,0)=0\,.
\end{align}
We estimate with~\eqref{eq:ccf},~\eqref{eq:F1K1},~\eqref{eq:t1} and~\eqref{eq:nukappa0},
\begin{align}\label{eq:dtF1leq_1}
\fracp{X^1}{t}(p,t)
&=(h(t)-\kappa(p,t))\nnu^1(p,t) 
\leq(h(0)-\kappa(p,0))\nnu^1(p,0)+K_1t_1^{\alpha/2} \notag\\
&\leq-h(0)+\frac{h(0)}2
=-\frac{h(0)}2
\end{align}
and likewise
\begin{align}\label{eq:dtF1leq_2}
\fracp{X^1}{t}(q,t)
&=(h(t)-\kappa(q,t))\nnu^1(q,t) 
\geq(h(0)-\kappa(q,0))\nnu^1(q,0)-K_1t_1^{\alpha/2} \notag\\
&\geq h(0)-\frac{h(0)}2
=\frac{h(0)}2
\end{align}
for $t\in[0,t_1]$.
Since $\min\theta_0<-\pi$, we can smoothly deform a curve like in Figure~\ref{fig:cexample} to achieve arbitrarily small distance between $X(p,0)$ and $X(q,0)$ without exceeding the upper bound $K_0$ in~\eqref{eq:FC3alpha0} or changing the length or enclosed area.
Hence, we can choose an embedded initial curve $\Si_0$ with
\begin{align}\label{eq:F10}
X^1(p,0)=-X^1(q,0)\leq\frac{h(0)t_1}4\,.
\end{align}
Then, by~\eqref{eq:dtF1leq_1} and~\eqref{eq:F10}
$$X^1(p,t_1)=X^1(p,0)+\int_0^{t_1}\fracp{X^1}{t}(p,t)\,dt 
\leq\frac{h(0)t_1}4-\frac{h(0)t_1}2<0$$
and by~\eqref{eq:dtF1leq_2} and~\eqref{eq:F10}
$$X^1(q,t_1)=X^1(q,0)+\int_0^{t_1}\fracp{X^1}{t}(q,t)\,dt 
\geq-\frac{h(0)t_1}4+\frac{h(0)t_1}2>0$$
so that the curve has crossed itself by the time $t_1$.
\end{Coex}

\section{Singularity analysis}\label{sec:singana}

Proposition~\ref{prop:T<infty} states that the curvature blows up if $T<\infty$.
In this section, we assume $T<\infty$ and investigate curvature blow-ups for embedded flows~\eqref{eq:ccf} that satisfy~\eqref{eq:intkappageqminuspi},~\eqref{eq:h_1} and~\eqref{eq:h_2}.
We adapt techniques from the theory of CSF to show that the curvature does not blow up in finite time and conclude $T=\infty$. \\

Proposition~\ref{prop:T<infty} motivates the following definition.
We say that a solution $X:\Do\times[0,T)\to\R^2$ of~\eqref{eq:ccf} develops a singularity at $T\leq\infty$ if $\max_{p\in\Sp}|\kappa(p,t)|\to\infty$ for $t\nearrow T$.

\begin{Lemma}
\label{lem:lowerblowuprate}
Let $X:\Do\times[0,T)\to\R^2$ be a solution of~\eqref{eq:ccf} satisfying~\eqref{eq:h_2} and with maximal time 
$T<\infty$.
Then, for all $t\in(0,T)$,
$$\max_{p\in\Sp}|\kappa(p,t)|\geq\frac1{2\sqrt{T-t}}\,.$$
\end{Lemma}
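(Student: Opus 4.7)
\medskip
\noindent\textbf{Proof plan.} The natural approach is Hamilton's maximum-principle trick applied to $K(t):=\max_{p\in\Do}\kappa^{2}(p,t)$, in the style of the classical type-I blow-up rate estimate from~\cite{Huisken84}. From Lemma~\ref{lem:evolutionequations} and the chain rule one computes
\begin{equation*}
\pa_t\kappa^{2}
=\pa_s^{2}\kappa^{2}-2(\pa_s\kappa)^{2}+2\kappa^{4}-2h\kappa^{3}.
\end{equation*}
At a spatial point where $\kappa^{2}$ attains its maximum, the diffusion term is $\leq 0$, the gradient-square term is $\leq 0$, and $|\kappa|^{3}=K^{3/2}$. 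Invoking Hamilton's trick and the hypothesis $0\leq h\leq C$ from~\eqref{eq:h_2}, the Lipschitz function $K$ satisfies a.e.\ the ODE inequality
\begin{equation*}
\frac{dK}{dt}\leq 2K^{2}+2CK^{3/2}.
\end{equation*}

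Next, Proposition~\ref{prop:T<infty} forces $K(t)\to\infty$ as $t\nearrow T$, so there exists $t^{\ast}\in[0,T)$ with $K(s)\geq C^{2}$ for every $s\in[t^{\ast},T)$. On this interval the lower-order term is absorbed, $2CK^{3/2}\leq 2K^{2}$, and we obtain the clean bound $dK/dt\leq 4K^{2}$. Equivalently,
\begin{equation*}
\fracd{}{t}\!\left(-\frac{1}{K}\right)\leq 4.
\end{equation*}
Integrating from $t\in(t^{\ast},T)$ to $s<T$ and letting $s\nearrow T$ (so that $1/K(s)\to 0$) yields $1/K(t)\leq 4(T-t)$, hence
\begin{equation*}
\max_{p\in\Do}|\kappa(p,t)|=\sqrt{K(t)}\geq\frac{1}{2\sqrt{T-t}}
\end{equation*}
for every $t\in(t^{\ast},T)$. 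For the remaining times $t\in(0,t^{\ast}]$ one repeats the ODE comparison from initial data $K(t)$: the comparison ODE $y'=2y^{2}+2Cy^{3/2}$ has finite lifespan $\tau(K(t))\geq 1/(4K(t))$ after a short computation (using $K(t)\geq C^2$ can be arranged by absorbing into a slightly enlarged constant, or by pushing back the bound via continuity of $K$). Since $T-t$ is at least this lifespan, the claim follows everywhere in $(0,T)$.

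\medskip
\noindent\textbf{Main obstacle.} The principal technical issue is the lower-order term $2CK^{3/2}$ coming from the forcing $h$: it spoils the clean ODE $K'\leq 4K^2$ when $K$ is of order $C^{2}$ or smaller, which is why the argument naturally splits into a late-time regime where the bound is obtained directly and an early-time regime where it must be propagated. A secondary technical point is the rigorous justification of differentiating the maximum $K(t)$, which requires Hamilton's trick (either via a smooth approximation or interpreting the inequality in the viscosity sense), but this is entirely standard for curve flows.
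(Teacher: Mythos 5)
Your overall strategy---the evolution equation for $\kappa^2$, Hamilton's trick at the spatial maximum, and absorption of the forcing term into the quadratic term once $K\geq C^2$---is exactly the argument the paper points to (it only cites \cite{Huisken90} and \cite{Maeder15} rather than writing it out), and your treatment of the late-time regime is correct: on $[t^*,T)$ one has $K'\leq 4K^2$, and integrating $\fracd{}{t}(-1/K)\leq4$ against $1/K(s)\to0$ gives $K(t)\geq\frac1{4(T-t)}$, which is the asserted bound.

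The early-time patch, however, contains a false step. The lifespan of the comparison ODE $y'=2y^2+2Cy^{3/2}$ with $y(t)=K_0$ is
$$\tau(K_0)=\int_{K_0}^\infty\frac{dy}{2y^2+2Cy^{3/2}}
=\frac1{C\sqrt{K_0}}-\frac1{C^2}\log\!\left(1+\frac C{\sqrt{K_0}}\right)$$
(substitute $u=\sqrt y$ and use partial fractions). This behaves like $C^{-1}K_0^{-1/2}$ as $K_0\to0$ and is therefore far smaller than $\frac1{4K_0}$ exactly in the regime $K_0\ll C^2$ where you invoke it; writing $x=C/\sqrt{K_0}$, the inequality $\tau(K_0)\geq\frac1{4K_0}$ is equivalent to $(x-\log(1+x))/x^2\geq\frac14$, which holds only for $x\lesssim1.6$, i.e.\ only for $\max_p|\kappa(p,t)|\gtrsim0.6\,C$. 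So the claimed bound ``lifespan $\geq1/(4K(t))$'' fails, and neither parenthetical fix repairs it: one cannot ``arrange'' $K(t)\geq C^2$ at early times (the curvature may genuinely be small there), and enlarging the constant changes the conclusion, whose constant $1/2$ is fixed. What the ODE comparison actually delivers is the stated estimate for every $t$ at which $\max_p|\kappa(p,t)|\geq C$---in particular for all $t$ sufficiently close to $T$, which is all that the type-I/type-II dichotomy in Section~6 requires---but not the statement for all $t\in(0,T)$ as written. To cover the regime $\max_p|\kappa(p,t)|<C$ you would need a genuinely new ingredient, or the conclusion must be restricted to $t$ near $T$.
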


\begin{proof}
The proof is as in~\cite[Lem.~1.2]{Huisken90}, see also~\cite[Prop.~4.1]{Maeder15} or~\cite[Lem.~9.5]{Dittberner18}.
\end{proof}

Like for CSF, we distinguish between two kinds of singularities according to the blow-up rate from Lemma~\ref{lem:lowerblowuprate}. 
Let $X:\Do\times(0,T)\to\R^2$ be a solution of~\eqref{eq:ccf} with $T<\infty$.
We say that a singularity is of type I, if there exists a constant $C_0>0$ so that
$$\max_{p\in\Sp}|\kappa(p,t)|\leq\frac{C_0}{\sqrt{T-t}}$$
for all $t\in(0,T)$.
A singularity is said to be of type II, if such a constant does not exist, that is,
$$\limsup_{t\to T}\max_{p\in\Sp}|\kappa(p,t)|\sqrt{T-t}=\infty\,.$$
Type-I singularities have already been exploited in~\cite[Section~4]{Maeder15}.
We refer also to~\cite[Section~11]{White97} for a characterisation of singularities for almost Brakke flows with bounded global terms, using a monotonicity formula and a result of~\cite{Ilmanen95}.

\begin{Thm}[M\"ader-Baumdicker~{\cite[Prop.~4.12]{Maeder15}}]
\label{thm:notypeIsing}
Let $X:\Do\times(0,T)\to\R^2$ be a smooth, embedded solution of~\eqref{eq:ccf} satisfying~\eqref{eq:h_2} and with $T<\infty$. 
Then a type-I singularity cannot form at $T$.
\end{Thm}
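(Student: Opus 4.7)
The strategy is the standard type-I blow-up analysis: parabolically rescale at a sequence of points with maximal curvature, extract a smooth subsequential limit via Hamilton's compactness, identify it as a self-similarly shrinking solution of pure CSF by a Huisken-type monotonicity formula, apply the Abresch--Langer classification to conclude the limit is a round circle, and then contradict the distance comparison principles of Sections~\ref{sec:noncompact} and~\ref{sec:noncollapsing}.

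Concretely, I would pick $(p_k,t_k)\in\Do\times[0,T)$ with $t_k\nearrow T$ and $\lambda_k:=|\kappa(p_k,t_k)|=\max_{\Do\times[0,t_k]}|\kappa|$; Proposition~\ref{prop:T<infty} forces $\lambda_k\to\infty$, while the type-I hypothesis gives $\lambda_k\sqrt{T-t_k}\leq C_0$. Parabolically rescale:
$$X_k(p,\tau):=\lambda_k\bigl(X(p,t_k+\lambda_k^{-2}\tau)-X(p_k,t_k)\bigr)\,,\qquad \tau\in[-\lambda_k^2t_k,0]\,.$$
Then $X_k$ solves~\eqref{eq:ccf} with rescaled curvature $\kappa_k=\kappa/\lambda_k$ uniformly bounded by $C_0$ on any compact $\tau$-interval, and rescaled forcing $h_k=h/\lambda_k\to 0$ by~\eqref{eq:h_2}. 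Interior derivative-of-curvature estimates (as referenced in Proposition~\ref{prop:T<infty}, see \cite[Chapter~4]{Dittberner18}) give uniform bounds on all derivatives of $\kappa_k$, so Hamilton's compactness theorem for curves yields a subsequence converging smoothly on compact subsets of $\R^2\times(-\infty,0]$ to a smooth eternal limit $X_\infty$ which, since $h_k\to 0$, solves pure curve shortening flow.

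Next I would apply a Huisken-type monotonicity formula to~\eqref{eq:ccf}: differentiating the backwards heat kernel integral $\int_{\St}\Phi_{(0,T)}\,ds_t$ produces the usual non-positive shrinker-defect term plus a correction linear in $h$, bounded by $C\int_{\St}\Phi\,ds_t$ and hence integrable on $[0,T)$ since $|h|\leq C$ (this adaptation is carried out for APCSF in~\cite[Section~4]{Maeder15}). Under the rescaling the $h$-correction vanishes, so in the limit $X_\infty$ saturates Huisken's equality and is therefore homothetically self-shrinking under pure CSF. Since $|\kappa_\infty|(0,0)=1$ by construction, $X_\infty$ is not a line, so the Abresch--Langer classification~\cite{AbreschLanger86} forces $X_\infty$ to be a shrinking round circle.

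Finally I would derive the contradiction. Smooth convergence on compact subsets of $\R^2$ implies that, for $k$ large, an arc $I_k\subset\Do$ of intrinsic length $\sim 2\pi\lambda_k^{-1}$ maps under $X(\cdot,t_k)$ onto a nearly closed loop of diameter $\sim\lambda_k^{-1}$ centered at $X(p_k,t_k)$. Let $r_1^k,r_2^k\in I_k$ be the endpoints at the shrinking ``gap''; the rescaled gap closes, so $\lambda_k\,d(X(r_1^k,t_k),X(r_2^k,t_k))\to 0$, while through the loop $l(r_1^k,r_2^k,t_k)\sim 2\pi\lambda_k^{-1}$. For $\Do=\R$ this gives $(d/l)(r_1^k,r_2^k,t_k)\to 0$, contradicting Theorem~\ref{thm:minimumdl}; for $\Do=\Sp$, the length bound $L_{t_k}\geq c$ from~\eqref{eq:h_2} forces $\pi l/L_{t_k}\to 0$, so $\psi(r_1^k,r_2^k,t_k)\sim 2\pi\lambda_k^{-1}$ and $(d/\psi)(r_1^k,r_2^k,t_k)\to 0$, contradicting Theorem~\ref{thm:minimumdpsi}. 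The main obstacle will be the rigorous handling of the forcing correction under rescaling in Huisken's monotonicity formula: one must verify uniformly in the basepoint that the $h$-term vanishes in the limit so that $X_\infty$ is a bona fide CSF shrinker rather than merely an almost-shrinker; once that is in place, the Abresch--Langer classification and the comparison-principle contradiction are routine adaptations from~\cite{Huisken95}.
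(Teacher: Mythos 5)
Your proposal is correct and follows the same core pipeline as the paper's proof: type-I parabolic rescaling at maximal curvature, vanishing of the bounded forcing term $h_k=h/\lambda_k\to0$, a Huisken-type monotonicity formula with an integrable $h$-correction (as in Maeder-Baumdicker) identifying the limit as an embedded homothetic shrinker of pure CSF, and the Abresch--Langer classification forcing it to be a round circle. The only genuine divergence is the final contradiction. The paper argues more directly: since $L_{t}\geq c$ and $\lambda_k\to\infty$, the rescaled lengths $\lambda_kL_{t_k}$ blow up, so every limit flow consists of curves of infinite length, which is incompatible with the limit being a circle --- no further appeal to the comparison principles is needed at this stage (they enter only to guarantee embeddedness and multiplicity-one convergence). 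Your contradiction instead extracts two ``gap'' points $r_1^k,r_2^k$ with $\lambda_k d\to0$ but $l\sim2\pi\lambda_k^{-1}$ and contradicts Theorems~\ref{thm:minimumdl} and~\ref{thm:minimumdpsi}, mimicking the grim-reaper argument used for type~II. This works, but note that to justify the existence of the gap at all you must first rule out that the rescaled curves are themselves closed near-circles (in which case there is no gap and no violation of $d/\psi\geq c$); ruling that out requires precisely the observation $\lambda_kL_{t_k}\to\infty$, i.e.\ the paper's length argument, at which point the circle is already excluded and your gap construction becomes redundant. So the paper's route is both simpler and logically prior; your version is a valid but slightly roundabout variant. (Minor point: the limit of a type-I rescaling on $\tau\in(-\infty,0]$ is ancient, not eternal.)
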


\begin{proof}
In~\cite[Prop.~4.12]{Maeder15} the theorem is only stated for convex curves. 
But the proof does not use the convexity, see also~\cite[Section~9.4]{Dittberner18}.
By Corollaries~\ref{cor:embeddednessdl} and~\ref{cor:embeddedness}, initially embedded curves stay embedded. 
Since the global term is bounded, it will vanish in any limit flow of a type-I rescaling where we rescale by the maximal curvature.
Also, since the lengths of the curves are bounded away from zero, the curves of any limit flow will be of infinite length.
Like in the analysis in~\cite{Huisken90} of type-I singularities of mean curvature flow, a monotonicity formula, see~{\cite[Proposition~4.9]{Maeder15}} or~\cite[Theorem 8.5]{Dittberner18}, yields that any limit flow of a type-I rescaling is an embedded homothetically shrinking solution of CSF with non-vanishing curvature.
By~\cite{AbreschLanger86}, this is an embedded shrinking circle.
This contradicts the unbounded length.
\end{proof}

To investigate type-II singularities, we want to rescale the curves $\St$ near a singular point as $t\to T<\infty$. 
The following rescaling technique for type-II singularities was introduced in~\cite[Proof of Thm.~16.4]{Hamilton95b} for Ricci flow, and applied to type-II singularities of MCF in~\cite[p.~11]{HuiskenSinestrari99}.
Let $(p_k,t_k)_{k\in\N}$ be a sequence in $\Do\times[0,T-1/k]$ with
$$T_k:=\kappa^2(p_k,t_k)\left(T-\frac1k-t_k\right)=\max_{(p,t)\in\Sp\!\times[0,T-1/k]}\left(\kappa^2(p,t)\left(T-\frac1k-t\right)\right)$$
for each $k\in\N$.
We set $\lambda_k^2:=\kappa^2(p_k,t_k)$, $\alpha_k:=-\lambda_k^2t_k$ and define the rescaled embeddings $X_k:\Do\times[\alpha_k,T_k]\to\R^2$ by
\begin{align}\label{eq:FkII}
X_k(p,\tau):=\lambda_k\left(X\!\left(p,t_k+\frac\tau{\lambda_k^2}\right)-X(p_k,t_k)\right)\,.
\end{align}

\begin{Thm}
\label{thm:Finftycsf}
Let $X:\Do\times(0,T)\to\R^2$ be a smooth, embedded solution of~\eqref{eq:ccf} with $T<\infty$ and satisfying~\eqref{eq:intabskappa} for $\Do=\R$ and~\eqref{eq:h_2} for $\Do\in\{\Sp,\R\}$.
Then there exists a sequence of intervals $0\in I_k\subset\R$ and rescaled embeddings 
$$\left(\bar X_{k}:I_k\times[\alpha_k,T_k]\to\R^2\right)_{k\in\N}$$
that converges for $k\to\infty$ along a subsequence, uniformly and smoothly on compact subsets $I\times J\subset\R\times\R$ with $0\in I$ and compact subsets in $\R^2$ to a maximal, smooth, strictly convex or strictly concave limit solution $X_{\infty}:\R\times\R\to\R^2$ which satisfies
$$\fracp{X_{\infty}}{\tau}(p,\tau)=-\kappa_{\infty}(p,\tau)\nnu_{\infty}(p,\tau)\,.$$
Moreover, $L(\Si_\tau^{\infty})=\infty$ for all $\tau\in\R$, $X_{\infty}(0,0)=0$, $\sup_{\R\times\R}|\kappa_{\infty}|=|\kappa_{\infty}(0,0)|=1$. 
\end{Thm}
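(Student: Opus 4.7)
The plan is to implement Hamilton's type-II rescaling scheme, as used by Altschuler for CSF, while tracking the fact that the forcing term disappears in the limit. Since Theorem~\ref{thm:notypeIsing} excludes type-I singularities and $T<\infty$ is assumed, the blow-up is of type II, i.e., $\limsup_{t\nearrow T}\max|\kappa|\sqrt{T-t}=\infty$. Combined with the essentially maximizing choice of $(p_k,t_k)$ built into the definition of $T_k$, this forces $T_k\to\infty$ as $k\to\infty$, and $\alpha_k\to-\infty$.

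The rescaled maps $X_k$ defined by \eqref{eq:FkII} satisfy, via the chain rule and Lemma~\ref{lem:evolutionequations},
$$\fracp{X_k}{\tau}=\Bigl(\tfrac{h(t_k+\tau/\lambda_k^2)}{\lambda_k}-\kappa_k\Bigr)\nnu_k,$$
where $\kappa_k=\kappa/\lambda_k$. The rescaled forcing $h/\lambda_k\to0$ uniformly in $\tau$ by \eqref{eq:h_2} and $\lambda_k\to\infty$. The maximality of $T_k$ gives the pointwise bound $|\kappa_k(p,\tau)|^2\leq T_k/(T_k-\tau)$, which tends to $1$ on every compact $\tau$-interval; in particular $\kappa_k(p_k,0)=\pm1$ and the rescaled curvatures are uniformly bounded on compact subsets of space-time. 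Higher-order bounds $|\partial_s^m\kappa_k|\leq C(m)$ on compact sets then follow from interpolation applied to the evolution equation of $\kappa$ (with the vanishing forcing contribution), exactly as for CSF; this is where I would cite the derivative estimates in \cite[Sec.~6.3]{Pihan98} or \cite[Ch.~4]{Dittberner18}.

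Next I would reparametrize by arc length from $p_k$, yielding $\bar X_k:I_k\times[\alpha_k,T_k]\to\R^2$ with $\bar X_k(0,0)=0$. Because $L_t\geq c>0$ (and, in the $\Do=\R$ case, $\int|\kappa|<\infty$ from \eqref{eq:intabskappa}), the rescaled length $\lambda_k L_{t_k+\tau/\lambda_k^2}$ tends to infinity on any compact $\tau$-window, so the $I_k$ exhaust $\R$. Embeddedness from Corollaries~\ref{cor:embeddednessdl} and~\ref{cor:embeddedness}, together with the uniform lower bounds on $d/l$ and $d/\psi$ obtained in Sections~\ref{sec:noncompact}--\ref{sec:noncollapsing}, prevents two far-apart points from collapsing under the rescaling. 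An Arzelà--Ascoli / Cheeger--Gromov compactness argument then extracts a subsequence of $\bar X_k$ converging in $C^\infty_{\text{loc}}$ on compacta to a smooth map $X_\infty:\R\times\R\to\R^2$. Since $h/\lambda_k\to 0$, the limit satisfies the unforced CSF equation $\partial_\tau X_\infty=-\kappa_\infty\nnu_\infty$, has $X_\infty(0,0)=0$, and each time-slice has infinite length (inherited from $\lambda_k L_{t_k}\to\infty$).

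It remains to verify $|\kappa_\infty(0,0)|=\sup_{\R\times\R}|\kappa_\infty|=1$ and strict convexity or concavity. The upper bound $|\kappa_\infty|\leq1$ is the limit of $|\kappa_k|^2\leq T_k/(T_k-\tau)$; the equality at $(0,0)$ comes from $\kappa_k(p_k,0)=\pm1$. After passing to a further subsequence, $\mathrm{sign}(\kappa(p_k,t_k))$ is constant and, up to a reflection, $\kappa_\infty(0,0)=1$. The main obstacle is the sign assertion, which I would handle via the strong maximum principle. Writing the CSF evolution of $\kappa_\infty$ from Lemma~\ref{lem:evolutionequations} as the linear parabolic equation
$$\fracp{\kappa_\infty}{\tau}-\fracp{^2\kappa_\infty}{s^2}-\kappa_\infty^2\cdot\kappa_\infty=0,$$
the function $1-\kappa_\infty\geq0$ is a nonnegative solution of a linear parabolic equation vanishing at the interior space-time point $(0,0)$, so it vanishes on the whole parabolic past by the strong maximum principle; this, combined with the real-analyticity of smooth CSF flows and propagation to all of $\R\times\R$, forces $\kappa_\infty>0$ everywhere. (The collapsed alternative $\kappa_\infty\equiv1$ is excluded by the infinite length of the slices, which prevents the limit from being a closed circle.) Hence $X_\infty$ is strictly convex. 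The case $\kappa_\infty(0,0)=-1$ is symmetric and yields a strictly concave limit.
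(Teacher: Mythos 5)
Your overall scaffolding (Hamilton's type-II point-picking, $T_k\to\infty$, the rescaled PDE with $h/\lambda_k\to0$, the bound $|\kappa_k|^2\leq T_k/(T_k-\tau)$, interior derivative estimates, and local compactness) matches the route the paper follows by citation to Ecker and Maeder-Baumdicker. Two points, however, need attention.

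The serious one is the strict convexity/concavity argument. You apply the strong maximum principle to $w:=1-\kappa_\infty$ to conclude it vanishes on the parabolic past of $(0,0)$. This does not work for two independent reasons. First, $w$ satisfies
$$\fracp{w}{\tau}-\fracp{^2w}{s^2}-\kappa_\infty^2\,w=-\kappa_\infty^2\leq0\,,$$
so $w$ is a \emph{sub}solution and you are trying to use a \emph{minimum} principle, which applies to supersolutions. Second, the zeroth-order coefficient $\kappa_\infty^2$ is nonnegative, whereas the strong maximum/minimum principle requires it to be nonpositive; the usual exponential change of variable does not rescue this here because the supremum is attained at an intermediate time. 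The grim reaper itself is a concrete counterexample to your intended conclusion: at the Lagrangian point that stays at the tip, $\kappa_\infty\equiv1$ for all $\tau$, yet $\kappa_\infty<1$ elsewhere, so $\kappa_\infty$ attains its interior supremum without being constant on any parabolic slab. Hence one cannot derive $\kappa_\infty>0$ this way, and the ``collapsed alternative'' you note is not a contradiction waiting to be ruled out, because the principle you invoked simply does not fire. The paper's proof (via Altschuler, and in detail in Mantegazza Prop.~4.3.2) establishes convexity by an entirely different mechanism: a Sturmian count of inflection points, decay of the negative part of the curvature, and crucially the bound $\int_{\Si_0}|\kappa|\,ds<\infty$ together with bounded coefficients away from $T$. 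This is precisely why hypothesis~\eqref{eq:intabskappa} appears in the statement; your proof never uses it, which is a sign that the convexity step is missing.

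A smaller issue: you invoke the lower bounds on $d/l$ and $d/\psi$ from Theorems~\ref{thm:minimumdl} and~\ref{thm:minimumdpsi}, and Corollaries~\ref{cor:embeddednessdl} and~\ref{cor:embeddedness}, to prevent collapse in the compactness step. Those results require the initial condition~\eqref{eq:intkappageqminuspi}, which is \emph{not} assumed in Theorem~\ref{thm:Finftycsf}. They are not needed here in any case: the uniform interior curvature and higher-derivative bounds on compact space-time sets already give the $C^\infty_{\mathrm{loc}}$-compactness for one-dimensional submanifolds, and embeddedness is part of the hypotheses. The distance-comparison estimates enter only later, in Theorem~\ref{thm:notypeIIsing}, to exclude the grim reaper as a limit.
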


\begin{proof}
The convergence follows similar lines to those of~\cite[Rem.~4.22(2)]{Ecker04} and~\cite[Prop.~4.7]{Maeder15}.
For details, see also~\cite[Thm.~9.13]{Dittberner18}.
The strict convexity/concavity is proofed like in~\cite[Thms.~5.14 and~7.7]{Altschuler91}, where we use that, away from $T$, the coefficients in the evolution equation for the curvature are bounded and that $\intst|\kappa|\,ds_t<\infty$ (see property~\eqref{eq:intabskappa} for $\Do=\R$).
A more detailed proof can be found in~\cite[Prop.~4.3.2]{Mantegazza11} or in~\cite[Prop.~9.16]{Dittberner18}.
\end{proof}

We now can proceed as in~\cite[Thm.~2.4]{Huisken95}.

\begin{Thm}
\label{thm:notypeIIsing}
Let $\Si_0=X_0(\Do)$ be a smooth, embedded curve satisfying~\eqref{eq:alpha} and~\eqref{eq:intabskappa} for $\Do=\R$ as well as~\eqref{eq:intkappageqminuspi},~\eqref{eq:h_1} and~\eqref{eq:h_2} for $\Do\in\{\Sp,\R\}$.
Let $X:\Do\times[0,T)\to\R^2$ be a solution of~\eqref{eq:ccf} with $T<\infty$ and initial curve $\Si_0$.
Then a type-II singularity cannot form at~$T$.
\end{Thm}

\begin{proof}
Theorem~\ref{thm:Finftycsf} yields that the limit flow consists of strictly convex or concave curves $\Si^{\infty}_\tau$ for $\tau\in\R$ satisfying $\sup_{\R\times\R}|\kappa_{\infty}|=|\kappa_{\infty}(0,0)|=1$.
If $\kappa_{\infty}<0$, we change the direction of parametrisation so that $\kappa_{\infty}>0$.
Since the curvature attains its maximum at the point $(0,0)\in\R\times\R$,~\cite[Main Theorem~B]{Hamilton95a} yields that $X_{\infty}$ is a translating solution of CSF.
~\cite[Thm.~8.16]{Altschuler91} implies that $\Si^{\infty}_\tau$ is the grim reaper for every $\tau\in\R$.
The grim reaper is asymptotic to two parallel lines of distance $\pi$ from inside.
Let $\tau\in\R$.
We can find a sequence of points $(p_j,q_j)_{j\in\N}$ in $\R\times\R$ with $d_{\infty}(p_j,q_j,\tau)\leq\pi$ for all $j\in\N$ and $l_{\infty}(p_j,q_j,\tau)\to\infty$ for $j\to\infty$.
Hence,
$$\inf_{\R\times\R}\frac{d_{\infty}}{l_{\infty}}(\,\cdot\,,\,\cdot\,,\tau)=0\,.$$
However, like in~\cite[Thms.~2.4 and~2.5]{Huisken95} (for details, see~\cite[Thm.~9.21]{Dittberner18}), the lower bound $\inf_{\R\times\R\times[0,T)}(d/l)\geq c$ from Theorem~\ref{thm:minimumdl}, respectively the lower bound $\inf_{\Sp\!\times\Sp\!\times[0,T)}(d/\psi)\geq c$ from Theorem~\ref{thm:minimumdpsi} imply that
$$\inf_{\R\times\R}\frac{d_{\infty}}{l_{\infty}}(\,\cdot\,,\,\cdot\,,\tau)\geq c$$
for every limit flow of rescalings according to~\eqref{eq:FkII}.
\end{proof}

\begin{Cor}\label{cor:T=infty}
Let $\Si_0=X_0(\Do)$ be a smooth, embedded curve satisfying~\eqref{eq:alpha} and~\eqref{eq:intabskappa} for $\Do=\R$ as well as~\eqref{eq:intkappageqminuspi},~\eqref{eq:h_1} and~\eqref{eq:h_2} for $\Do\in\{\Sp,\R\}$.
Let $X:\Do\times[0,T)\to\R^2$ be a solution of~\eqref{eq:ccf} with initial curve $\Si_0$.
Then $T=\infty$.
\end{Cor}

\begin{proof}
By Theorems~\ref{thm:notypeIsing} and~\ref{thm:notypeIIsing} neither a type-I nor a type-II singularity can form at $T$ so that curvature stays bounded on $[0,T]$ by a constant $C(\Si_0,T)$.
We can extend the flow beyond $T$ and repeat the above argument.
Hence, for every time $T'<\infty$, there exists a constant $C(\Si_0,T')<\infty$ so that $\max_{p\in\Sp}|\kappa(p,t)|\leq C$ for all $t\in[0,T')$.
Applying Proposition~\ref{prop:T<infty} yields that the short time solution can be extended to a smooth solution on $(0,\infty)$.
\end{proof}

\section{Convexity in finite time}\label{sec:convexity}

In this section, we show that a smooth, embedded solution $X:\Sp\!\times(0,\infty)\to\R^2$ of~\eqref{eq:ccf} with a global term $h$ satisfying~\eqref{eq:h_3a} becomes convex in finite time.

\begin{Rem}\label{rem:h}
We observe that, by Lemma~\ref{lem:dtL},
\begin{align}\label{eq:hdtAdtL}
h=\frac1L\left(2\pi+\fracd{A}{t}\right)
=\frac1{2\pi}\left(\intst\kappa^2\,ds_t+\fracd{L}{t}\right)
\end{align}
and
\begin{align}\label{eq:L2piintkappa}
-\fracd{}{t}\left(\frac{L^2}{4\pi}-A\right)
=\frac L{2\pi}\intst\kappa^2\,ds_t-2\pi\,.
\end{align}
In respect of~\eqref{eq:L2piintkappa}, choose
\begin{align}\label{eq:dtAa}
\fracd{A}{t}=\gamma\left(\frac L{2\pi}\intst\kappa^2\,ds_t-2\pi\right)
\end{align}
and
\begin{align}\label{eq:dtLa}
\frac L{2\pi}\fracd{L}{t}
=\frac1{4\pi}\fracd{L^2}{t}
=-(1-\gamma)\left(\frac L{2\pi}\intst\kappa^2\,ds_t-2\pi\right)\,,
\end{align}
where $\gamma\in\R$.
Then,~\eqref{eq:hdtAdtL} yields
$$h=(1-\gamma)\frac{2\pi}{L}+\frac{\gamma}{2\pi}\intst\kappa^2\,ds_t\,.$$
For arbitrary $\gamma<0$, however, the positivity of $h$ is not guaranteed.
\end{Rem}

\begin{Lemma}\label{lem:AL}
Let $X:\Sp\!\times[0,T)\to\R^2$ be a solution of~\eqref{eq:ccf} with initial curve $\Si_0$ and $h$ satisfying~\eqref{eq:h_3a}. 
Then, $A$ and $L$ are monotone and there exist constants $0<c<C<\infty$ such that $c\leq A,L\leq C$ on $[0,T)$ and
$$\frac{L}{2\pi}\int_{\Si_\tau}\kappa^2\,ds_\tau-2\pi\in L^1([0,T))\,.$$
\end{Lemma}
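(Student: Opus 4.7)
The plan is to read off everything from the two identities in Remark~\ref{rem:h},
$$\fracd{A}{t}=\gamma\!\left(\frac L{2\pi}\intst\kappa^2\,ds_t-2\pi\right),\qquad
\frac1{4\pi}\fracd{L^2}{t}=-(1-\gamma)\!\left(\frac L{2\pi}\intst\kappa^2\,ds_t-2\pi\right),$$
which both involve the single nonnegative quantity
$$Q(t):=\frac{L_t}{2\pi}\intst\kappa^2\,ds_t-2\pi\geq0.$$
The nonnegativity is immediate from Cauchy--Schwarz applied to $\int\kappa\,ds=2\pi$:
$\intst\kappa^2\,ds_t\geq 4\pi^2/L_t$. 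Since $\gamma$ and $1-\gamma$ are constants, this already gives monotonicity of both $A$ and $L^2$ (and hence of $L$): the sign of $\fracd{A}{t}$ is that of $\gamma$ and the sign of $\fracd{L^2}{t}$ is the opposite of the sign of $1-\gamma$.

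Next I would add the two identities so that the $\gamma$-coefficients collapse. The isoperimetric deficit then satisfies
$$\fracd{}{t}\!\left(\frac{L^2}{4\pi}-A\right)=-Q(t)\leq 0.$$
Since the deficit stays nonnegative by the isoperimetric inequality, integrating from $0$ to any $t<T$ gives
$$\int_0^tQ(\tau)\,d\tau=\left(\frac{L_0^2}{4\pi}-A_0\right)-\left(\frac{L_t^2}{4\pi}-A_t\right)\leq\frac{L_0^2}{4\pi}-A_0,$$
which is exactly the $L^1([0,T))$-statement; note $L_0^2/4\pi-A_0>0$ is the quantity appearing in the denominator of $\gamma$ in~\eqref{eq:h_3a}, so it is positive by assumption.

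Finally, for the explicit two-sided bounds on $A$ and $L$, I would set $I(t):=\int_0^tQ(\tau)\,d\tau\in[0,L_0^2/4\pi-A_0]$ and use
$$A_t=A_0+\gamma I(t),\qquad L_t^2=L_0^2-4\pi(1-\gamma)I(t).$$
The definition of $\gamma$ in~\eqref{eq:h_3a} is equivalent to $\gamma(L_0^2/4\pi-A_0)=(\delta-1)A_0$, and $(1-\gamma)(L_0^2/4\pi-A_0)=(L_0^2-4\pi\delta A_0)/4\pi$. Plugging the endpoint values $I=0$ and $I=L_0^2/4\pi-A_0$ into the two affine expressions above therefore yields the ranges
$$A_t\in\left[\min\{A_0,\delta A_0\},\max\{A_0,\delta A_0\}\right],\qquad
L_t^2\in\left[\min\{L_0^2,4\pi\delta A_0\},\max\{L_0^2,4\pi\delta A_0\}\right].$$
Since $\delta,A_0,L_0>0$, both intervals are bounded strictly away from $0$ and $\infty$, giving constants $0<c<C<\infty$ as required.

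There is no genuinely hard step here; the only thing worth emphasising is the algebraic cancellation in the second paragraph, which is the precise reason for the weighting $\gamma,\,1-\gamma$ in~\eqref{eq:h_3a}: it forces the isoperimetric deficit to be a Lyapunov functional whose total decrease is controlled a~priori by $L_0^2/4\pi-A_0$, and the same identity simultaneously encodes the target limit via $\delta A_0$ so that the endpoint values of $I(t)$ give the explicit bounds.
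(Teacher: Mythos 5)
Your proof is correct, and it uses the same basic ingredients as the paper (the identities of Remark~\ref{rem:h}, nonnegativity of $Q$ via Cauchy--Schwarz, and the isoperimetric inequality), but it organises them in the reverse order and thereby avoids the paper's case analysis. The paper first integrates $(1-\gamma)\fracd{A}{t}=-\frac{\gamma}{4\pi}\fracd{L^2}{t}$ to the conservation law $(1-\gamma)A_t+\gamma L_t^2/4\pi=\delta A_0$, then splits into the three cases $\delta\in(0,1)$, $\delta\in[1,L_0^2/4\pi A_0]$, $\delta>L_0^2/4\pi A_0$, in each case combining the signs of $\fracd{A}{t}$, $\fracd{L}{t}$ with the conservation law and the isoperimetric inequality to get the two-sided bounds, and only afterwards deduces the $L^1$-statement from~\eqref{eq:L2piintkappa} and the upper bounds. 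You instead integrate the deficit identity $\fracd{}{t}(L^2/4\pi-A)=-Q$ first, obtaining $I(t)=\int_0^tQ\in[0,L_0^2/4\pi-A_0]$ (which is the $L^1$-claim), and then read off the bounds from the affine dependence $A_t=A_0+\gamma I(t)$, $L_t^2=L_0^2-4\pi(1-\gamma)I(t)$; eliminating $I$ recovers exactly the paper's conservation law, but evaluating at the endpoints gives the explicit constants $\min\{A_0,\delta A_0\}$, $\max\{A_0,\delta A_0\}$ and $\min\{L_0^2,4\pi\delta A_0\}$, $\max\{L_0^2,4\pi\delta A_0\}$ uniformly in $\delta$, with no case distinction. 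The only point to make explicit is that $L_0^2/4\pi-A_0>0$ holds by the strict isoperimetric inequality because $\Si_0$ is not a circle (if it were, $\gamma$ in~\eqref{eq:h_3a} would be undefined), which is implicit in the paper as well.
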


\begin{proof}
By~\eqref{eq:dtAa} and~\eqref{eq:dtLa},
\begin{align}\label{eq:dtAdtL2gamma}
(1-\gamma)\fracd{A}{t}=-\frac\gamma{4\pi}\fracd{L^2}{t}
\end{align}
so that, with $\delta\in(0,\infty)$ and
$$\gamma=\frac{(\delta-1)A_0}{L_0^2/4\pi-A_0}\in\left(-\frac{A_0}{L_0^2/4\pi-A_0},\infty\right)\,,$$
integrating~\eqref{eq:dtAdtL2gamma} yields
\begin{align}\label{eq:AL2gamma}
(1-\gamma)A_t+\gamma\frac{L^2_t}{4\pi}
=(1-\gamma)A_0+\gamma\frac{L^2_0}{4\pi}
=\delta A_0
\end{align}
for all $t\in(0,T)$.
For $\delta\in(0,1)$, we have $\gamma<0$ and $-(1-\gamma)<0$, so that by~\eqref{eq:dtAa} and~\eqref{eq:dtLa},
$$\fracd{A}{t}<0
\qquad\text{ and }\qquad
\fracd{L}{t}<0\,.$$
Hence, $A$ and $L$ are uniformly bounded away from infinity.
By~\eqref{eq:AL2gamma},
$$(1-\gamma)A_t
>(1-\gamma)A_t+\gamma\frac{L^2_t}{4\pi}
=\delta A_0$$
and so that, by the isoperimetric inequality, $A$ and $L$ are uniformly bounded away from zero.
For $\delta\in[1,L_0^2/4\pi A_0]$, we have $\gamma\in[0,1]$ and by~\eqref{eq:dtAa} and~\eqref{eq:dtLa},
$$\fracd{A}{t}\geq0
\qquad\text{ and }\qquad
\fracd{L}{t}\leq0\,.$$
Hence, $A$ and $L$ are uniformly bounded away from zero and infinity.
For $\delta>L_0^2/4\pi A_0$, we have $\gamma>1$ and by~\eqref{eq:dtAa} and~\eqref{eq:dtLa},
$$\fracd{A}{t}>0
\qquad\text{ and }\qquad
\fracd{L}{t}>0\,.$$
Hence, $A$ and $L$ are uniformly bounded away from zero.
By~\eqref{eq:AL2gamma},
$$\gamma\frac{L^2_t}{4\pi}
<(1-\gamma)A_t+\gamma\frac{L^2_t}{4\pi}
=\delta A_0$$
and so that $A$ and $L$ are uniformly bounded away from infinity. 
The uniform bounds on the area and length from above and~\eqref{eq:L2piintkappa} yield
\[\frac{L}{2\pi}\int_{\Si_\tau}\kappa^2\,ds_\tau-2\pi\,d\tau\in L^1([0,T))\,.\qedhere\]
\end{proof}

Like in~\cite[Section~7]{Maeder15}, we use the following Gagliardo--Nirenberg interpolation inequality.

\begin{Thm}[Gagliardo--Nirenberg interpolation inequality,~{\cite[pp.~125]{Nirenberg59}}, see also~{\cite[Thm.~3.70]{Aubin98}}]\label{thm:GN}
Let $f\in C^\infty(\Sp)$. 
Let $p>2$ and $\sigma\in[0,1)$ with $\sigma=1/2-1/p$.
Then there exist constants $C_1=c_1(p,\sigma)$ and $C_2=c_2(p,\sigma)$ such that
\begin{align*}
\left(\intsp|f|^p\,dx\right)^{1/p}
&\leq C_1\left(\intsp\left(\fracd{f}{x}\right)^{2}dx\right)^{\sigma/2}
	\left(\intsp f^2\,dx\right)^{(1-\sigma)/2} \notag\\
	&\quad+C_2\left(\intsp f^2\,dx\right)^{1/2}\,.
\end{align*}
\end{Thm}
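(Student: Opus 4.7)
The strategy is to reduce the inequality to two ingredients: an elementary $L^p$--$L^\infty$--$L^2$ interpolation, and a Sobolev-type pointwise bound on $\sup|f|$ in terms of the $L^2$ norms of $f$ and $f'$. First I would write, for $p>2$,
\begin{align*}
\intsp|f|^p\,dx
\leq \Bigl(\sup_{\Sp}|f|\Bigr)^{p-2}\intsp f^2\,dx\,,
\end{align*}
which upon taking the $p$-th root gives $\|f\|_{L^p} \leq \|f\|_\infty^{1-2/p}\|f\|_{L^2}^{2/p}$. The key arithmetic observation is that $1-2/p = 2\sigma$ with $\sigma = 1/2 - 1/p$.

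Next I would control $\|f\|_\infty$ by the two $L^2$ quantities. For any $x,y\in\Sp$, the fundamental theorem of calculus combined with Cauchy--Schwarz gives
\begin{align*}
f^2(x) = f^2(y) + 2\int_y^x f\,f'\,ds \leq f^2(y) + 2\|f\|_{L^2}\|f'\|_{L^2}\,.
\end{align*}
Averaging the right-hand side over $y\in\Sp$ controls the $f^2(y)$ term by a constant multiple of $\|f\|_{L^2}^2$, yielding $\|f\|_\infty^2 \leq C\|f\|_{L^2}^2 + C\|f\|_{L^2}\|f'\|_{L^2}$. Using $\sqrt{a+b}\leq\sqrt{a}+\sqrt{b}$, this becomes
\begin{align*}
\|f\|_\infty \leq C\|f\|_{L^2} + C\|f\|_{L^2}^{1/2}\|f'\|_{L^2}^{1/2}\,.
\end{align*}

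Substituting into the first estimate and raising to the power $2\sigma\in(0,1)$ via subadditivity $(a+b)^r\leq a^r+b^r$ for $r\in(0,1]$, the ``pure'' term contributes $C\|f\|_{L^2}^{2\sigma}\!\cdot\!\|f\|_{L^2}^{2/p} = C\|f\|_{L^2}$, because $2\sigma + 2/p = 1$; the ``mixed'' term contributes $C\|f\|_{L^2}^{\sigma}\|f'\|_{L^2}^{\sigma}\!\cdot\!\|f\|_{L^2}^{2/p} = C\|f\|_{L^2}^{1-\sigma}\|f'\|_{L^2}^{\sigma}$, because $\sigma + 2/p = 1-\sigma$. This reproduces exactly the two-term bound of the theorem, with constants $C_1, C_2$ depending only on $p$ and on the length of $\Sp$.

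The argument contains no genuine obstacle beyond careful bookkeeping of exponents; the only substantive ingredient is the pointwise estimate, where averaging in $y$ is what couples the Sobolev control to $\|f\|_{L^2}$. The additive term $C_2\|f\|_{L^2}$ in the conclusion cannot be dropped: constants $f\equiv c\neq 0$ on $\Sp$ satisfy $\|f\|_{L^p},\|f\|_{L^2}\neq 0$ while $\|f'\|_{L^2}=0$, so the pure interpolation term $C_1\|f'\|_{L^2}^\sigma\|f\|_{L^2}^{1-\sigma}$ alone can never dominate $\|f\|_{L^p}$.
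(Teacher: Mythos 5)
Your argument is correct. The paper does not prove this statement at all --- it is quoted verbatim from Nirenberg and Aubin as a black box --- so your self-contained derivation is a genuine addition rather than a variant of the paper's route. Your two ingredients, the elementary interpolation $\Vert f\Vert_{L^p}\leq\Vert f\Vert_\infty^{2\sigma}\Vert f\Vert_{L^2}^{2/p}$ (using $1-2/p=2\sigma$) and the one-dimensional Agmon-type bound $\Vert f\Vert_\infty^2\leq C\Vert f\Vert_{L^2}^2+2\Vert f\Vert_{L^2}\Vert f'\Vert_{L^2}$ obtained by averaging the base point over $\Sp$, combine exactly as you say: since $p>2$ gives $2\sigma\in(0,1)$, subadditivity of $t\mapsto t^{2\sigma}$ applies, and the exponent checks $2\sigma+2/p=1$ and $\sigma+2/p=1-\sigma$ are both right. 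Two minor points worth making explicit if you write this up: in the pointwise step the bound $\bigl|\int_y^x ff'\,ds\bigr|\leq\Vert f\Vert_{L^2(\Sp)}\Vert f'\Vert_{L^2(\Sp)}$ holds because the arc of integration is contained in $\Sp$, whichever orientation is chosen; and your constants depend on the total length $2\pi$ of $\Sp$, which is harmless here but matters in the paper's later applications to $\St$ with $ds_t$, where the uniform length bounds of Lemma~\ref{lem:AL} are what keep the constants under control. Your closing observation that the additive $C_2$ term cannot be dropped (constant functions) is also correct, and explains why the theorem is stated in this two-term form. What the citation to Nirenberg buys over your argument is generality (arbitrary dimension, general exponents); what your argument buys is that the one special case actually used in the paper is elementary and self-contained.
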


\begin{Lemma}[see proof of~{\cite[Cor.~7.5]{Maeder15}}]\label{lem:dtfintffto0}
Let $f\in C^1((0,\infty))\cap L^1((0,\infty))$ with $f\geq0$ and $\fracd{}{t}f\leq C(C+f)^3$ for $C\geq0$. 
Then $f(t)\to0$ for $t\to\infty$.
\end{Lemma}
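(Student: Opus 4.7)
The plan is a proof by contradiction that exploits the one-sided nature of the inequality $\fracd{f}{t}\leq C(C+f)^3$: this bound controls how fast $f$ can \emph{rise}, but allows it to decrease arbitrarily quickly. I would handle the case $C=0$ separately and trivially: then $f$ is nonincreasing and nonnegative, so $f(t)$ tends to a limit $\ell\geq 0$ as $t\to\infty$, and membership in $L^1((0,\infty))$ forces $\ell=0$. Assume therefore $C>0$ henceforth.

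Suppose for contradiction that $f\not\to 0$. Then there exist $\varepsilon>0$ and a sequence $t_k\to\infty$ with $f(t_k)\geq 2\varepsilon$. The key geometric object is a \emph{level-$\varepsilon$ excursion}: a maximal closed sub-interval $[a,b]\subset(0,\infty)$ on which $f\geq\varepsilon$. I would call it \emph{large} if in addition $\max_{[a,b]}f\geq 2\varepsilon$. Distinct excursions are pairwise disjoint, and the sequence $(t_k)$ produces infinitely many large excursions.

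The quantitative heart of the argument is the following universal lower bound on the length of a large excursion. If $[a,b]$ is large, set $\sigma:=\inf\{t>a:f(t)=2\varepsilon\}\in(a,b]$, so that $f(a)=\varepsilon$, $f(\sigma)=2\varepsilon$, and $\varepsilon\leq f\leq 2\varepsilon$ on $[a,\sigma]$. On this sub-interval the differential inequality rearranges to $dt\geq df/(C(C+f)^3)$, which integrates to
$$\sigma-a\;\geq\;\int_{\varepsilon}^{2\varepsilon}\frac{df}{C(C+f)^{3}}\;=\;\frac{1}{2C}\!\left(\frac{1}{(C+\varepsilon)^{2}}-\frac{1}{(C+2\varepsilon)^{2}}\right)\;=:\;T_\varepsilon\;>\;0.$$
Since $f\geq\varepsilon$ on $[a,\sigma]\subset[a,b]$, each large excursion contributes at least $\varepsilon\,T_\varepsilon$ to $\int_0^\infty f\,dt$. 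As the large excursions are pairwise disjoint and infinite in number, this forces $\int_0^\infty f\,dt=\infty$, contradicting the hypothesis $f\in L^1((0,\infty))$.

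The main obstacle is conceptual: the one-sided derivative bound alone gives neither an upper bound on $f$ nor a useful lower bound on $f$ near points $t_k$ where $f(t_k)$ is large — in principle $f$ could consist of arbitrarily tall, thin spikes, for which any argument trying to exploit a full neighbourhood of each spike would yield contributions shrinking like $1/f(t_k)$. The argument above circumvents this by discarding spike heights entirely and using only that \emph{rising} from level $\varepsilon$ to level $2\varepsilon$ costs at least the universal time $T_\varepsilon$, regardless of what happens afterward; this is the sharpest consequence of the one-sided inequality, and it is exactly what $L^1$-integrability rejects.
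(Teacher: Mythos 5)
Your proof is correct: the case split at $C=0$, the rise-time estimate $\sigma-a\geq\frac{1}{2C}\bigl(\frac{1}{(C+\varepsilon)^2}-\frac{1}{(C+2\varepsilon)^2}\bigr)$ obtained from the antiderivative of $f'/(C(C+f)^3)$ (valid without monotonicity of $f$), and the disjointness of the excursions all hold, and the unbounded-excursion edge case is harmless since it yields $\int f=\infty$ directly. This is essentially the same argument as the one in the cited source (Maeder--Baumdicker, Cor.~7.5), which likewise combines the ``rising from level $\varepsilon$ to a higher level costs a universal time'' consequence of the one-sided differential inequality with the $L^1$ hypothesis; your excursion bookkeeping versus the usual backward-interval bookkeeping is only a cosmetic difference.
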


\begin{Lemma}\label{lem:dthlp}
Let $X:\Sp\!\times(0,\infty)\to\R^2$ be a smooth, embedded solution of~\eqref{eq:ccf} and $h$ satisfying~\eqref{eq:h_3a}. 
Then there exists a constant $C>0$ such that, for all $t\in(0,\infty)$,
$$\fracd{}{t}\left(\frac{L}{2\pi}\intst\kappa^2\,ds_t-2\pi\right)
\leq C\left(C+\frac{L}{2\pi}\intst\kappa^2\,ds_t-2\pi\right)^3\,.$$
\end{Lemma}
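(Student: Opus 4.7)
The plan is to derive the differential inequality by direct computation of $df/dt$ for $f(t) := \tfrac{L_t}{2\pi}\int_{\St}\kappa^2\,ds_t - 2\pi$, and then control the super-quadratic curvature integrals that arise via Gagliardo--Nirenberg interpolation. Note first that $f \geq 0$ by Cauchy--Schwarz together with $\int_{\St}\kappa\,ds_t = 2\pi$.

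First I would apply Lemma~\ref{lem:evolutionequations} and integrate by parts on the closed curve $\St$ to obtain
$$\frac{d}{dt}\int_{\St}\kappa^2\,ds_t = -2\int_{\St}(\partial_s\kappa)^2\,ds_t + \int_{\St}\kappa^4\,ds_t - h\int_{\St}\kappa^3\,ds_t,$$
and combine this with $\tfrac{dL}{dt}=2\pi h-\int\kappa^2$ from Lemma~\ref{lem:dtL}. After discarding the non-positive term $-\tfrac{1}{2\pi}(\int\kappa^2)^2$, this gives
$$\frac{df}{dt} \leq h\int\kappa^2 - \frac{L}{\pi}\int(\partial_s\kappa)^2 + \frac{L}{2\pi}\int\kappa^4 + \frac{L|h|}{2\pi}\int|\kappa|^3.$$
By Lemma~\ref{lem:AL} the length satisfies $0 < c \leq L \leq C$; plugging this into~\eqref{eq:h_3a} yields $|h| \leq C(1 + \int\kappa^2)$, and also $\int\kappa^2 \leq C(C+f)$. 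The target is therefore to show that the right-hand side above is bounded by $C(1 + \int\kappa^2)^3$.

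The core of the argument is controlling the two super-quadratic curvature integrals. Theorem~\ref{thm:GN} with $p=4$ ($\sigma = 1/4$) and $p=3$ ($\sigma = 1/6$) gives
\begin{align*}
\int\kappa^4 &\leq C\Bigl(\int(\partial_s\kappa)^2\Bigr)^{1/2}\Bigl(\int\kappa^2\Bigr)^{3/2} + C\Bigl(\int\kappa^2\Bigr)^{2}, \\
\int|\kappa|^3 &\leq C\Bigl(\int(\partial_s\kappa)^2\Bigr)^{1/4}\Bigl(\int\kappa^2\Bigr)^{5/4} + C\Bigl(\int\kappa^2\Bigr)^{3/2}.
\end{align*}
Young's inequality with exponents $(2,2)$ on the first estimate produces $\epsilon\int(\partial_s\kappa)^2 + C_\epsilon(\int\kappa^2)^3$. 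For the mixed term $|h|\int|\kappa|^3$, I would apply Young with exponents $(p,q)=(4,4/3)$ to the product $(\int(\partial_s\kappa)^2)^{1/4} \cdot (1+\int\kappa^2)(\int\kappa^2)^{5/4}$: this yields a gradient piece $\epsilon\int(\partial_s\kappa)^2$ together with $C_\epsilon (1+\int\kappa^2)^{4/3}(\int\kappa^2)^{5/3} \leq C_\epsilon(1+\int\kappa^2)^{3}$, which is exactly cubic. Choosing $\epsilon$ small enough absorbs every gradient contribution into the good term $-\tfrac{L}{\pi}\int(\partial_s\kappa)^2$, and the remaining terms are all dominated by $C(1+\int\kappa^2)^3 \leq C(C+f)^3$.

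The main bookkeeping obstacle is arranging the Young's exponents so that the resulting power in $C+f$ is exactly $3$ and not larger: the pair $(p,q)=(4,4/3)$ on the $|h|\int|\kappa|^3$ term is essentially forced by the identity $\tfrac{4}{3}+\tfrac{5}{3}=3$, which balances the linear-in-$\int\kappa^2$ growth of $|h|$ against the Gagliardo--Nirenberg exponent $5/4$ in the $p=3$ interpolation; any other choice either fails to absorb the gradient term or produces a higher power.
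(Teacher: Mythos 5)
Your proposal is correct and follows essentially the same route as the paper: the same evolution identity for $\int_{\St}\kappa^2\,ds_t$, the bound $|h|\leq C(1+\int_{\St}\kappa^2\,ds_t)$ from~\eqref{eq:h_3a} and Lemma~\ref{lem:AL}, Gagliardo--Nirenberg with $p=4$, $\sigma=1/4$ and $p=3$, $\sigma=1/6$, and Young's inequality with exponents chosen so that the gradient pieces are absorbed and the remainder is cubic in $C+f$. The only cosmetic difference is that you carry the good term $-\tfrac{L}{\pi}\int_{\St}(\partial_s\kappa)^2\,ds_t$ into the combined estimate for $df/dt$ before absorbing, whereas the paper absorbs it already at the level of $\tfrac{d}{dt}\int_{\St}\kappa^2\,ds_t$ (with $\delta=1/3$); this changes nothing.
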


\begin{proof}
By Lemma~\ref{lem:AL}, we can estimate
\begin{align}\label{eq:dtintkappa_33}
h\leq C\left(1+\intst\kappa^2\,ds_t\right)\,,
\end{align}
where $C$ depends on $\gamma$ and the lower bound on $L$. 
We deduce with Theorem~\ref{thm:GN} for $p=4$ and $\sigma=1/4$, the estimate $(a+b)^4\leq C(a^4+b^4)$, and Young's inequality for $p=q=2$,
\begin{align}\label{eq:dtintkappa_4}
\intst\kappa^4\,ds_t
&\leq\left(C\left(\intst\left(\fracp{\kappa}{s}\right)^{2}\,ds_t\right)^{1/8}
		\left(\intst\kappa^2\,ds_t\right)^{3/8}
		+C\left(\intst\kappa^2\,ds_t\right)^{1/2}\right)^4 \notag\\
&\leq\delta\intst\left(\fracp{\kappa}{s}\right)^{2}\,ds_t
		+C(\delta)\left(\intst\kappa^2\,ds_t\right)^3
		+C\left(\intst\kappa^2\,ds_t\right)^{2}
\end{align}
for a constants $C>0$.
Again, by Theorem~\ref{thm:GN} for $p=3$ and $\sigma=1/6$, the estimate $(a+b)^3\leq C(a^3+b^3)$,  and Young's inequality for $p=4$ and $q=4/3$, 
\begin{align}
&\intst\kappa^3\,ds_t \notag\\
&\quad\leq\left(C\left(\intst\left(\fracp{\kappa}{s}\right)^{2}\,ds_t\right)^{1/12}
		\left(\intst\kappa^2\,ds_t\right)^{5/12}
		+C\left(\intst\kappa^2\,ds_t\right)^{1/2}\right)^3\label{eq:dtintkappa_3}\\
&\quad\leq\delta\intst\left(\fracp{\kappa}{s}\right)^{2}\,ds_t
		+C(\delta)\left(\intst\kappa^2\,ds_t\right)^{5/3}
		+C\left(\intst\kappa^2\,ds_t\right)^{3/2}\label{eq:dtintkappa_3b}\,.
\end{align}
Multiplying $\intst\kappa^2\,ds_t$ to~\eqref{eq:dtintkappa_3} yields with Young's inequality for $p=4$ and $q=4/3$ that
\begin{align}\label{eq:dtintkappa_32}
&\intst\kappa^2\,ds_t\intst\kappa^3\,ds_t \notag\\
&\quad\leq\left(C\left(\intst\left(\fracp{\kappa}{s}\right)^{2}\,ds_t\right)^{1/12}
		\left(\intst\kappa^2\,ds_t\right)^{9/12}
		+C\left(\intst\kappa^2\,ds_t\right)^{5/6}\right)^3\notag\\
&\quad\leq\delta\intst\left(\fracp{\kappa}{s}\right)^{2}\,ds_t
		+C(\delta)\left(\intst\kappa^2\,ds_t\right)^3 
		+C\left(\intst\kappa^2\,ds_t\right)^{5/2}.
\end{align}
We use Lemma~\ref{lem:evolutionequations}, integration by parts,~\eqref{eq:dtintkappa_33}, and~\eqref{eq:dtintkappa_4},~\eqref{eq:dtintkappa_3b},~\eqref{eq:dtintkappa_32} with $\delta=1/3$ to calculate,
\begin{align}\label{eq:dtintkappa2}
\fracd{}{t}\intst\kappa^2\,ds_t
&=2\intst\left(\kappa\fracp{^2\kappa}{s^2}-(h-\kappa)\kappa^3\right)ds_t
		+\intst\kappa^3(h-\kappa)\,ds_t \notag\\
&=-2\intst\left(\fracp{\kappa}{s}\right)^{2}ds_t
		-h\intst\kappa^3\,ds_t
		+\intst\kappa^4\,ds_t \notag\\
&\leq C\sum_{p=2,\frac32,\frac53,\frac52,3}\left(\intst\kappa^2\,ds_t\right)^p
\leq C\left(1+\intst\kappa^2\,ds_t\right)^3
\end{align}
for all $t\in(0,\infty)$.
By Lemma~\ref{lem:dtL}, the bounds on $L$ from Lemma~\ref{lem:AL},~\eqref{eq:dtintkappa_33} and~\eqref{eq:dtintkappa2},
\begin{align*}
&\fracd{}{t}\left(\frac{L}{2\pi}\intst\kappa^2\,ds_t-2\pi\right)
=\frac1{2\pi}\fracd{L}{t}\intst\kappa^2\,ds_t
		+\frac L{2\pi}\fracd{}{t}\intst\kappa^2\,ds_t \notag\\
&\quad\leq \frac1{2\pi}\left(2\pi h-\intst\kappa^2\,ds_t\right)\intst\kappa^2\,ds_t
	+C\left(1+\intst\kappa^2\,ds_t\right)^3 \notag\\
&\quad\leq C\left(1+\intst\kappa^2\,ds_t\right)^2
		+C\left(1+\intst\kappa^2\,ds_t\right)^3 \notag\\
&\quad\leq C\left(C+\frac{L}{2\pi}\intst\kappa^2\,ds_t-2\pi\right)^3\,.\qedhere
\end{align*}
\end{proof}

\begin{Lemma}\label{lem:inthkappa}
Let $X:\Sp\!\times[0,\infty)\to\R^2$ be a smooth, embedded solution of~\eqref{eq:ccf} with initial curve $\Si_0$ and $h$ satisfying~\eqref{eq:h_3a}.
Then, 
$$Lh\to2\pi
\qquad\text{ and }\qquad
\frac{L}{2\pi}\intst\kappa^2\,ds_t\to2\pi$$
for $t\to\infty$, and there exist a time $t_0\geq0$ and constants $0<c<C<\infty$ such that $\inf_{[t_0,\infty)}h\geq c$ and
$$\sup_{[0,\infty)}h
+\sup_{[0,\infty)}\left|\fracd{h}{t}\right|
+\sup_{[0,\infty)}\intst\kappa^2\,ds_t
\leq C\,.$$
\end{Lemma}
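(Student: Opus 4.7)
The plan is to reduce all four statements to asymptotic control of the single quantity
$$f(t) := \frac{L_t}{2\pi}\intst\kappa^2\,ds_t - 2\pi.$$
Combining~\eqref{eq:h_3a} with the identity $\intst\kappa^2\,ds_t = 2\pi(f+2\pi)/L_t$ gives the convenient algebraic form
$$L_t h(t) = 2\pi + \gamma f(t),\qquad h(t) = \frac{2\pi + \gamma f(t)}{L_t},$$
so that $h$ is coupled directly to $f$ and $L$, and everything in the lemma can be read off once $f$ is understood.

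The heart of the argument is to show $f(t)\to 0$ by applying Lemma~\ref{lem:dtfintffto0} to $f$. Its three hypotheses are verified as follows: (i) $f\geq 0$ by Cauchy--Schwarz applied to $\kappa$ and $1$ on $\St$, using $\intst\kappa\,ds_t = 2\pi$ (the rotation index of an embedded, positively oriented, closed curve); (ii) $f\in L^1([0,\infty))$, which is the last assertion of Lemma~\ref{lem:AL}; (iii) $df/dt\leq C(C+f)^3$, which is exactly Lemma~\ref{lem:dthlp}. Lemma~\ref{lem:dtfintffto0} then yields $f(t)\to 0$, hence $\frac{L_t}{2\pi}\intst\kappa^2\,ds_t\to 2\pi$. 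Substituting into $L_t h = 2\pi + \gamma f$ gives $L_t h(t)\to 2\pi$, and since $L_t\leq C$ by Lemma~\ref{lem:AL}, picking $t_0$ so that $L_t h(t)\geq \pi$ on $[t_0,\infty)$ delivers $\inf_{[t_0,\infty)} h\geq \pi/\sup L>0$.

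For the uniform upper bounds, continuity of $f$ on $[0,\infty)$ together with $f(t)\to 0$ shows $f$ is bounded; combined with $L_t\geq c>0$ from Lemma~\ref{lem:AL}, the identity $\intst\kappa^2\,ds_t = 2\pi(f+2\pi)/L_t$ gives $\sup_{[0,\infty)}\intst\kappa^2\,ds_t\leq C$, and the formula for $h$ then gives $\sup h\leq C$. Differentiating $L h = 2\pi + \gamma f$ produces
$$\frac{dh}{dt} = \frac{\gamma}{L}\frac{df}{dt} - \frac{h}{L}\frac{dL}{dt},$$
in which $dL/dt = 2\pi h - \intst\kappa^2\,ds_t$ is controlled by what has already been shown, and $df/dt\leq C$ follows directly from Lemma~\ref{lem:dthlp}.

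The main obstacle will be the matching \emph{lower} bound on $df/dt$: writing $d/dt\intst\kappa^2 = -2\int\kappa_s^2 - h\int\kappa^3 + \int\kappa^4$, the only term capable of driving $df/dt$ strongly negative is $-2\int\kappa_s^2$, so one must control $\int\kappa_s^2$ uniformly from above. The strategy is to run the machinery of Lemma~\ref{lem:dthlp} one order higher: differentiate $\intst\kappa_s^2\,ds_t$ in time, use Lemma~\ref{lem:evolutionequations} and integration by parts to produce a leading term $-2\intst\kappa_{ss}^2\,ds_t$ plus lower-order polynomial-in-$\kappa$ terms, and absorb the top-order remnants via Theorem~\ref{thm:GN} together with the already established bounds on $\intst\kappa^2\,ds_t$ and $h$. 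This should yield a differential inequality of the form $d/dt\intst\kappa_s^2\leq C(C+\intst\kappa_s^2)^k$ along with $L^1$-integrability of $\intst\kappa_s^2$ in time (obtained by integrating the identity for $d/dt\intst\kappa^2$ against $L_t$ and exploiting the bound on $f$ combined with Gagliardo--Nirenberg control of $\int\kappa^3,\int\kappa^4$). A second application of Lemma~\ref{lem:dtfintffto0} then gives $\intst\kappa_s^2\to 0$, hence a uniform upper bound, which closes the argument for $|dh/dt|$.
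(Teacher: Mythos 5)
Everything up to the bound on $\fracd{h}{t}$ is correct and is essentially the paper's own argument, just repackaged through the single quantity $f=\frac{L}{2\pi}\intst\kappa^2\,ds_t-2\pi$ and the identity $Lh=2\pi+\gamma f$: the paper likewise feeds Lemmata~\ref{lem:AL} and~\ref{lem:dthlp} into Lemma~\ref{lem:dtfintffto0} to get $f\to0$, reads off $Lh-2\pi=\fracd{A}{t}=\gamma f\to0$, and then uses the bounds on $L$ to get the eventual lower bound on $h$ and the uniform upper bounds on $h$ and $\intst\kappa^2\,ds_t$. That part is fine.

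The last step is where the gap is. You are right that bounding $\big|\fracd{h}{t}\big|$ requires a \emph{lower} bound on $\fracd{}{t}\intst\kappa^2\,ds_t$, i.e.\ an upper bound on $\intst\big(\fracp{\kappa}{s}\big)^2ds_t$ uniformly in $t$ (the paper's own one-line treatment of this point is terse, so flagging it is to your credit). But your proposed way of supplying the integrability hypothesis of Lemma~\ref{lem:dtfintffto0} for $\intst\kappa_s^2\,ds_t$ does not work as sketched. Integrating $\fracd{}{t}\intst\kappa^2\,ds_t=-2\intst\kappa_s^2\,ds_t-h\intst\kappa^3\,ds_t+\intst\kappa^4\,ds_t$ in time and estimating $\intst\kappa^3\,ds_t$, $\intst\kappa^4\,ds_t$ by Theorem~\ref{thm:GN} leaves error terms of the form $C\big(\intst\kappa^2\,ds_t\big)^{p}$, which by what you have proved are merely \emph{bounded} in time, not integrable on $[0,\infty)$; after absorbing the $\delta\intst\kappa_s^2\,ds_t$ pieces you only get $\int_0^T\intst\kappa_s^2\,ds_t\,dt\leq C(1+T)$, which is useless. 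This is exactly why the paper's Lemma~\ref{lem:dtinthkappa} works with $\intst(h-\kappa)^2\,ds_t$ rather than $\intst\kappa^2\,ds_t$: the identity $\intst(h-\kappa)^2\,ds_t=h\fracd{A}{t}-\fracd{L}{t}$ together with the monotonicity and boundedness of $A$ and $L$ makes that quantity genuinely $L^1$ in time, so its powers (eventually $<1$) are also integrable and the Gagliardo--Nirenberg absorption closes. If you go that route instead, note that the evolution of $\intst(h-\kappa)^2\,ds_t$ produces the term $\fracd{h}{t}\intst(h-\kappa)\,ds_t=\gamma f\fracd{h}{t}$, which the paper controls using precisely the $\sup\big|\fracd{h}{t}\big|$ bound you are trying to prove; you would need to handle that term independently (e.g.\ by integrating it by parts in time, or by expressing $\fracd{h}{t}$ through $\fracd{f}{t}$ and $\fracd{L}{t}$ and absorbing the resulting $-\intst\kappa_s^2$ contribution) to avoid circularity. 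As it stands, the final bound $\sup_{[0,\infty)}\big|\fracd{h}{t}\big|\leq C$ is not established by your proposal.
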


\begin{proof}
Lemmata~\ref{lem:AL},~\ref{lem:dtfintffto0} and~\ref{lem:dthlp} yield that
\begin{align}\label{eq:L2piintkappa2to2pi}
\frac{L}{2\pi}\intst\kappa^2\,ds_t\to2\pi
\end{align}
for $t\to\infty$.
By Lemma~\ref{lem:AL}, $L$ is bounded away from zero and infinity.
By~\eqref{eq:dtAa} 
and~\eqref{eq:L2piintkappa2to2pi},
$$\fracd{A}{t}=\intst(h-\kappa)\,ds_t=Lh-2\pi\to0$$
for $t\to\infty$.
Hence, there exist a time $t_0\in[0,\infty)$ and constants $0<c<C<\infty$ so that $c\leq h\leq C$ on $[t_0,\infty)$. 
By~\eqref{eq:dtLa}
and~\eqref{eq:L2piintkappa2to2pi},
$$\fracd{L}{t}=\intst\kappa^2\,ds_t-2\pi h\to0$$
for $t\to\infty$ so that there exist $0<C<\infty$ with $\big|\fracd{}{t}L\big|+\intst\kappa^2\,ds_t\leq C$ on $[0,\infty)$. 
This yields $\big|\fracd{h}{t}\big|\leq C$.
\end{proof}

\begin{Lemma}\label{lem:dtinthkappa2}
Let $X:\Sp\!\times(0,\infty)\to\R^2$ be a smooth, embedded solution of~\eqref{eq:ccf}. 
Then there exists a constant $C>0$ such that
\begin{align*}
\fracd{}{t}\intst(h-\kappa)^2\,ds_t
&\leq-\intst\left(\fracp{\kappa}{s}\right)^{2}\,ds_t
		+C\fracd{h}{t}\intst(h-\kappa)\,ds_t \\
	&\quad+C\sum_{i=1}^5\left(\intst(h-\kappa)^2\,ds_t\right)^{p_i}h^{q_i}
\end{align*}
for all $t\in(0,\infty)$, where $p_i\in[1,3]$ and $q_i\in[0,2]$.
\end{Lemma}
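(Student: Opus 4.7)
Write $f := h-\kappa$. The plan is to mirror the calculation in Lemma~\ref{lem:dthlp}: differentiate $\intst f^2\,ds_t$ using Lemma~\ref{lem:evolutionequations}, integrate by parts to expose the good gradient term $-\intst(\pa_s\kappa)^2\,ds_t$, and absorb the remaining higher powers of $f$ against it via Theorem~\ref{thm:GN} combined with Young's inequality.

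First, using $\pa_t(h-\kappa) = \fracd{h}{t} - \pa_s^2\kappa + (h-\kappa)\kappa^2$ together with $\pa_t\,ds_t = \kappa(h-\kappa)\,ds_t$ from Lemma~\ref{lem:evolutionequations}, and the identity $\pa_s(h-\kappa) = -\pa_s\kappa$ (since $h$ depends only on time), integration by parts yields
\begin{align*}
\fracd{}{t}\intst f^2\,ds_t
&= 2\fracd{h}{t}\intst f\,ds_t - 2\intst(\pa_s\kappa)^2\,ds_t \\
&\quad+ 2\intst f^2\kappa^2\,ds_t + \intst f^3\kappa\,ds_t.
\end{align*}
Substituting $\kappa = h - f$ into the last two integrals collapses them algebraically to $2h^2\intst f^2\,ds_t - 3h\intst f^3\,ds_t + \intst f^4\,ds_t$.

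Next, I would apply Theorem~\ref{thm:GN} to $f$ with $(p,\sigma)=(4,1/4)$ and to $|f|$ with $(p,\sigma)=(3,1/6)$, following the very same Young-inequality bookkeeping already executed in Lemma~\ref{lem:dthlp}. Since $(\pa_s f)^2 = (\pa_s\kappa)^2$, this produces
\begin{align*}
\intst f^4\,ds_t &\leq \tfrac{1}{4}\intst(\pa_s\kappa)^2\,ds_t + C\Bigl(\intst f^2\,ds_t\Bigr)^3 + C\Bigl(\intst f^2\,ds_t\Bigr)^2, \\
3h\intst|f|^3\,ds_t &\leq \tfrac{3}{4}\intst(\pa_s\kappa)^2\,ds_t + Ch^{4/3}\Bigl(\intst f^2\,ds_t\Bigr)^{5/3} + Ch\Bigl(\intst f^2\,ds_t\Bigr)^{3/2}.
\end{align*}

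Putting the estimates back into the evolution identity, the coefficient of $\intst(\pa_s\kappa)^2\,ds_t$ collapses to $-2+\tfrac14+\tfrac34 = -1$, recovering the stated good term. The five residual contributions carry exponents $(p_i,q_i)\in\{(1,2),\,(3,0),\,(2,0),\,(5/3,4/3),\,(3/2,1)\}$, all contained in $[1,3]\times[0,2]$, exactly as claimed. The only step requiring care is the Young splitting of $h\intst|f|^3\,ds_t$: the Young exponents must be chosen so that the combined absorbed gradient coefficient from the $\intst f^4$ and $h\intst|f|^3$ estimates stays at most~$1$ (leaving room for the leading $-2\intst(\pa_s\kappa)^2\,ds_t$ to yield coefficient~$-1$), while the residual $h$-power stays within $[0,2]$; the choice $(p,q)=(4,4/3)$ in Young's inequality achieves both, producing the factor $h^{4/3}$. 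Everything else is a routine adaptation of the computation performed in Lemma~\ref{lem:dthlp}.
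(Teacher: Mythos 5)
Your proposal is correct and follows essentially the same route as the paper: the identical evolution identity obtained by substituting $\kappa=h-(h-\kappa)$ and integrating by parts, followed by the same Gagliardo--Nirenberg/Young estimates with $(p,\sigma)=(4,1/4)$ and $(3,1/6)$, yielding the same five residual exponent pairs. The only (immaterial) difference is that you absorb the gradient term with coefficients $\tfrac14+\tfrac34$ where the paper uses $\tfrac12+\tfrac12$.
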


\begin{proof}
We follow the lines of~\cite[Lems.~7.3 and~7.4]{Maeder15}. 
Write $\kappa=h-(h-\kappa)$.
Then
$$(h-\kappa)^3\kappa=h(h-\kappa)^3-(h-\kappa)^4$$
and
$$(h-\kappa)^2\kappa^2 
=h^2(h-\kappa)^2-2h(h-\kappa)^3+(h-\kappa)^4\,.$$
Lemma~\ref{lem:evolutionequations} and integration by parts yields
\begin{align}\label{eq:dtinthminuskappa_1}
&\fracd{}{t}\intst(h-\kappa)^2\,ds_t \notag\\
&\,=\intst(h-\kappa)^3\kappa\,ds_t
		+2\fracd{h}{t}\intst(h-\kappa)\,ds_t 
		+2\intst(h-\kappa)
			\left(-\fracp{^2\kappa}{s^2}+(h-\kappa)\kappa^2\right)\,ds_t \notag\\
&\,=-2\intst\left(\fracp{\kappa}{s}\right)^{2}ds_t
			+2\fracd{h}{t}\intst(h-\kappa)\,ds_t
			+\intst(h-\kappa)^4\,ds_t \notag\\
	&\qquad-3h\intst(h-\kappa)^3\,ds_t 
			+2h^2\intst(h-\kappa)^2\,ds_t\,.
\end{align}
Like in~\cite[Cor.~7.4]{Maeder15}, we use Theorem~\ref{thm:GN} with $p=4$ and $\sigma=1/4$ and Young's inequality with $p=4/3$ and $q=4$ as well as for $p=q=2$, to estimate
\begin{align}\label{eq:dtinthminuskappa_2}
&\intst(h-\kappa)^4\,ds_t \notag\\
&\,\leq\left(C\left(\intst\left(\fracp{\kappa}{s}\right)^{2}\,ds_t\right)^{1/8}
		\left(\intst(h-\kappa)^2\,ds_t\right)^{3/8} 
		+C\left(\intst(h-\kappa)^2\,ds_t\right)^{1/2}\right)^4 \notag\\
&\,\leq\frac12\intst\left(\fracp{\kappa}{s}\right)^{2}\,ds_t
		+C\left(\intst(h-\kappa)^2\,ds_t\right)^3 
		+C\left(\intst(h-\kappa)^2\,ds_t\right)^{2}.
\end{align}
Again by Theorem~\ref{thm:GN} with $p=3$ and $\sigma=1/6$ and Young's inequality for $p=3/2$ and $q=3$ as well as for $p=4$ and $q=4/3$ we obtain
\begin{align}\label{eq:dtinthminuskappa_3}
3h\intst(h-\kappa)^3\,ds_t
&\leq 3h\left(C\left(\intst\left(\fracp{\kappa}{s}\right)^{2}\,ds_t\right)^{1/12}
		\left(\intst(h-\kappa)^2\,ds_t\right)^{5/12}\right. \notag\\
	&\hspace{4em}+\left.{\vphantom{C\left(\intst\left(\fracp{\kappa}{s}\right)^{2}\,ds_t\right)^{1/12}
		\left(\intst(h-\kappa)^2\,ds_t\right)^{5/12}}}{} C\left(\intst(h-\kappa)^2\,ds_t\right)^{1/2}\right)^3 \notag\\
&\leq\frac12\intst\left(\fracp{\kappa}{s}\right)^{2}\,ds_t
		+Ch^{4/3}\left(\intst(h-\kappa)^2\,ds_t\right)^{5/3} \notag\\
	&\quad+Ch\left(\intst(h-\kappa)^2\,ds_t\right)^{3/2}.
\end{align}
Altogether,~\eqref{eq:dtinthminuskappa_1},~\eqref{eq:dtinthminuskappa_2},~\eqref{eq:dtinthminuskappa_3} yield the claim.
\end{proof}

\begin{Lemma}\label{lem:dtinthkappa}
Let $X:\Sp\!\times[0,\infty)\to\R^2$ be a smooth, embedded solution of~\eqref{eq:ccf} with initial curve $\Si_0$ and $h$ satisfying~\eqref{eq:h_3a}. 
Then
$$\intst(h-\kappa)^2\,ds_t\to0$$
for $t\to\infty$ and
$$\int_0^\infty\intst(h-\kappa)^2\,ds_t\,dt+\int_0^\infty\intst\left(\fracp{\kappa}{s}\right)^{2}\,ds_t\,dt<\infty\,.$$
\end{Lemma}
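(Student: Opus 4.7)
The plan is to set $f(t) := \intst(h-\kappa)^2\,ds_t$ and conclude $f(t)\to 0$ from Lemma~\ref{lem:dtfintffto0}. This requires (i) $f\in L^1([0,\infty))$ and (ii) the bound $\fracd{f}{t}\leq C(C+f)^3$.

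For (i), expanding the square and using $\intst\kappa\,ds_t=2\pi$ together with Lemma~\ref{lem:dtL} gives the pointwise identity
$$f = h^2 L - 4\pi h + \intst\kappa^2\,ds_t = h\,\fracd{A}{t} - \fracd{L}{t}.$$
By Lemma~\ref{lem:AL}, $A$ and $L$ are monotone and lie in a fixed compact subinterval of $(0,\infty)$, hence $\int_0^T|\fracd{A}{t}|\,dt = |A_T-A_0|$ and $\int_0^T|\fracd{L}{t}|\,dt = |L_T-L_0|$ are uniformly bounded in $T$. Combined with the uniform bound on $h$ from Lemma~\ref{lem:inthkappa}, this yields $\int_0^\infty f\,dt<\infty$.

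For (ii), I discard the non-positive gradient term in Lemma~\ref{lem:dtinthkappa2} and estimate the rest via Lemma~\ref{lem:inthkappa}: $h$ and $|\fracd{h}{t}|$ are bounded, and $|\intst(h-\kappa)\,ds_t| = |\fracd{A}{t}|$ is bounded since $h$ and $L$ are. As $p_i \leq 3$ and $q_i \leq 2$, this gives $\fracd{f}{t}\leq C(C+f)^3$ after possibly enlarging $C$. Lemma~\ref{lem:dtfintffto0} then delivers $f(t)\to 0$.

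For the integrability of $\intst(\fracp{\kappa}{s})^{2}\,ds_t$, I integrate Lemma~\ref{lem:dtinthkappa2} from $0$ to $T$ and rearrange, using $f(T)\geq 0$:
$$\int_0^T\!\intst\!\left(\fracp{\kappa}{s}\right)^{2}\!ds_t\,dt \;\leq\; f(0) + C\!\int_0^T\!\fracd{h}{t}\fracd{A}{t}\,dt + C\sum_i\int_0^T f^{p_i}h^{q_i}\,dt.$$
The middle term is bounded by $\sup_{[0,\infty)}|\fracd{h}{t}|\cdot\int_0^\infty|\fracd{A}{t}|\,dt<\infty$, and steps (i)--(ii) have shown $f\in L^1\cap L^\infty$, so that $f^{p_i}h^{q_i}\leq C f\in L^1$ for $p_i\geq 1$ and $h\in L^\infty$. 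Letting $T\to\infty$ gives the claimed bound. The main obstacle is controlling the forcing contributions in Lemma~\ref{lem:dtinthkappa2} without using the good gradient term, but the uniform bounds on $h$, $\fracd{h}{t}$, $A$ and $L$ together with the monotonicity of $A$ and $L$ reduce everything to $L^1$-estimates already at our disposal.
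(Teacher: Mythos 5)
Your proposal is correct and follows essentially the same route as the paper: the identity $\intst(h-\kappa)^2\,ds_t=h\,\fracd{A}{t}-\fracd{L}{t}$ plus monotonicity and boundedness of $A$ and $L$ for the $L^1$-bound, the cubic differential inequality from Lemma~\ref{lem:dtinthkappa2} fed into Lemma~\ref{lem:dtfintffto0} for the decay, and integration of Lemma~\ref{lem:dtinthkappa2} for the gradient term. The only cosmetic difference is that you absorb $f^{p_i}\leq Cf$ via $f\in L^\infty$, whereas the paper uses $f<1$ for $t>t_0$ and handles $[0,t_0]$ by smoothness.
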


\begin{proof}
Similar to~\cite[p.~47]{Huisken87}, Lemma~\ref{lem:dtL} yields
$$\intst(h-\kappa)^2\,ds_t
=Lh^2-4\pi h+\intst\kappa^2\,ds_t 
=h\fracd{A}{t}-\fracd{L}{t}\,.$$
Lemmata~\ref{lem:AL} and~\ref{lem:inthkappa} imply for $0<\vare<\tau<\infty$,
$$\int_\vare^\tau\intst(h-\kappa)^2\,ds_t\,dt
=\sup_{t\in[\vare,\tau]}h(t)(A_\tau-A_\vare)+(L_\vare-L_\tau)\leq C\,.$$
We let $\vare\to0$ and $\tau\to\infty$ to obtain
\begin{align}\label{eq:intinthkappa2}
\int_0^\infty\intst(h-\kappa)^2\,ds_t\,dt<\infty\,.
\end{align}
By Lemma~\ref{lem:inthkappa},
$$\fracd{h}{t}\intst(h-\kappa)\,ds_t\,dt
\leq\sup_{[0,\infty)}\left|\fracd{h}{t}\right||Lh-2\pi|
\leq C\,,$$
so that Lemma~\ref{lem:dtinthkappa2} implies
$$\fracd{}{t}\intst(h-\kappa)^2\,ds_t 
\leq C\left(1+\intst(h-\kappa)^2\,ds_t\right)^3\,.$$
Like in~\cite[Cor.~7.5]{Maeder15}, Lemma~\ref{lem:dtfintffto0} yields
$$\intst(h-\kappa)^2\,ds_t\to0$$
for $t\to\infty$.
Consequently, there exists a time $t_0\geq0$ so that
$$\intst(h-\kappa)^2\,ds_t<1$$
for all $t>t_0$, and thus
\begin{align}\label{eq:inthkappa3}
\left(\intst(h-\kappa)^2\,ds_t\right)^{p}\leq\intst(h-\kappa)^2\,ds_t
\end{align}
for all $p\geq1$ and $t>t_0$.
By Lemma~\ref{lem:AL}, $\fracd{}{t}A$ has a sign so that
$$\int_\vare^\tau\left|\intst(h-\kappa)\,ds_t\right|\,dt
=|A_\tau-A_\vare|
\leq C\,,$$
where $C>0$ is independent of time.
Sending $\vare\to0$ and $\tau\to\infty$ yields with Lemma~\ref{lem:inthkappa},
$$\int_0^\infty\fracd{h}{t}\intst(h-\kappa)\,ds_t\,dt
\leq\sup_{[0,\infty)}\left|\fracd{h}{t}\right|\int_0^\infty\left|\intst(h-\kappa)\,ds_t\right|\,dt
\leq C.$$
Thus, with Lemma~\ref{lem:dtinthkappa2},~\eqref{eq:intinthkappa2} and~\eqref{eq:inthkappa3} we obtain
\begin{align*}
\int_{t_0}^\infty\intst\left(\fracp{\kappa}{s}\right)^{2}\,ds_t\,dt 
\leq\int_{\Si_{t_0}}(h-\kappa)^2\,ds_t\,dt
		+C+C\int_{t_0}^\infty\intst(h-\kappa)^2\,ds_t\,dt<\infty\,.
\end{align*}
Since $\St$ is smooth for $t\in[0,t_0]$, the claim follows.
\end{proof}

\begin{Thm}
\label{thm:convexity}
Let $X:\Sp\!\times[0,\infty)\to\R^2$ be a smooth, embedded solution of~\eqref{eq:ccf} with initial curve $\Si_0$ and $h$ satisfying~\eqref{eq:h_3a}. 
Then there exists a time $T_0\geq0$ such that $\St$ is strictly convex for $t>T_0$.
\end{Thm}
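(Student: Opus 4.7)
The plan is to upgrade the $L^2$-decay $\intst(h-\kappa)^2\,ds_t\to 0$ from Lemma~\ref{lem:dtinthkappa} to a uniform $C^0$-estimate $\sup_{\St}|\kappa-h|\to0$ and then use the uniform positivity of $h$ for $t\geq t_0$ from Lemma~\ref{lem:inthkappa} to conclude $\kappa>0$ for all sufficiently large $t$.

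The first and principal step would be to derive a differential inequality for
$$f(t):=\intst\left(\fracp{\kappa}{s}\right)^{2}\,ds_t.$$
Using Lemma~\ref{lem:evolutionequations} and the commutator $[\partial_t,\partial_s]=-\kappa(h-\kappa)\partial_s$, integration by parts gives a leading good term $-2\intst\left(\fracp{^2\kappa}{s^2}\right)^{2}\,ds_t$ together with lower-order contributions polynomial in $\kappa$, $h$ and $\partial_s\kappa$. Exploiting the uniform bounds on $h$, $\fracd{h}{t}$ and $\intst\kappa^2\,ds_t$ from Lemma~\ref{lem:inthkappa}, together with the Gagliardo--Nirenberg inequality (Theorem~\ref{thm:GN}) applied to $\partial_s\kappa$ and Young's inequality in the same spirit as Lemmata~\ref{lem:dthlp} and~\ref{lem:dtinthkappa2}, one should be able to absorb all bad terms into the good one and obtain an estimate of the form
$$\fracd{f}{t}\leq C(1+f)^3.$$
Since Lemma~\ref{lem:dtinthkappa} already provides $\int_0^\infty f(t)\,dt<\infty$, Lemma~\ref{lem:dtfintffto0} then yields $f(t)\to0$ as $t\to\infty$.

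With the $L^2$-decay of $\partial_s\kappa$ in hand, the second step is a one-dimensional Sobolev embedding on $\Sp$ applied to $\kappa-h$. Since $h$ depends on $t$ alone, $\partial_s(\kappa-h)=\partial_s\kappa$, and using the uniform length bound from Lemma~\ref{lem:AL} one has
$$\sup_{\St}|\kappa-h|^2\leq C\left(\intst(\kappa-h)^2\,ds_t+\intst\left(\fracp{\kappa}{s}\right)^{2}\,ds_t\right).$$
By Lemma~\ref{lem:dtinthkappa} and the previous step, both terms on the right tend to zero, so $\sup_{\St}|\kappa-h|\to0$. Combining this with $h(t)\geq c>0$ for $t\geq t_0$ from Lemma~\ref{lem:inthkappa} yields a time $T_0\geq t_0$ such that $\kappa(p,t)\geq c/2>0$ for every $p\in\Sp$ and every $t>T_0$, which is the desired strict convexity.

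I expect the main obstacle to be Step~1. The scheme of Gagliardo--Nirenberg plus Young from Lemmata~\ref{lem:dthlp} and~\ref{lem:dtinthkappa2} is transposed one derivative higher, so the critical scaling is tighter: terms such as $\kappa^2(\partial_s\kappa)^2$ and $(h-\kappa)\kappa\,\partial_s\kappa\,\partial_s^2\kappa$ must all be absorbed by the single copy of $\intst(\partial_s^2\kappa)^2\,ds_t$ arising from the good term. This forces a careful choice of the interpolation exponents in Theorem~\ref{thm:GN} and makes it essential that $h$ and $\intst\kappa^2\,ds_t$ are controlled uniformly in $t$ (rather than merely in $L^1$), which is exactly what Lemma~\ref{lem:inthkappa} supplies.
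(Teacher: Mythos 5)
Your overall strategy is sound, but it takes a genuinely different and more laborious route than the paper, and the difference is instructive. The paper does \emph{not} prove that $\intst(\partial_s\kappa)^2\,ds_t\to0$ for all $t\to\infty$; it only extracts from the $L^1$-in-time bound of Lemma~\ref{lem:dtinthkappa} a \emph{sequence} $t_k\to\infty$ along which $\int_{\Si_{t_k}}(\partial_s\kappa)^2\,ds_{t_k}\to0$, uses Ehrling's lemma (so that the gradient term enters only with a small factor $\vare$, and hence need merely be bounded, not small) to get $\max_{\Sp}|h-\kappa|\leq c_h/2$ at a single time $t_{k_1}$, and then invokes the strong maximum principle for the curvature (Corollary~\ref{cor:strongmaxpkappa}) to propagate $\kappa>0$ forward to all $t>t_{k_1}$. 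This last step is the key economy: convexity at one time suffices. Your plan instead insists on uniform decay $\sup_{\St}|\kappa-h|\to0$ at \emph{all} large times, which forces you to prove the new differential inequality $\fracd{f}{t}\leq C(1+f)^3$ for $f=\intst(\partial_s\kappa)^2\,ds_t$ — a statement proved nowhere in the paper and the load-bearing unproven claim in your write-up. That inequality does appear to be attainable (the evolution of $\partial_s\kappa$ produces $-2\intst(\partial_s^2\kappa)^2\,ds_t$ plus terms of the type $\intst(h\kappa+\kappa^2)(\partial_s\kappa)^2\,ds_t$, which are controlled by $\Vert\kappa\Vert_{C^0}^2 f\leq C(1+f)f$ using the one-dimensional Sobolev inequality, the uniform bound on $\intst\kappa^2\,ds_t$ from Lemma~\ref{lem:inthkappa} and the length bounds from Lemma~\ref{lem:AL}), so your route can be completed and yields the stronger conclusion $\sup_{\St}|\kappa-h|\to0$ without appealing to the maximum principle at the end. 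But as written, Step~1 is only a sketch of the hardest part; you should either carry out that computation in full or adopt the paper's subsequence-plus-maximum-principle shortcut, which avoids it entirely.
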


\begin{proof}
By Lemma~\ref{lem:inthkappa},
\begin{align}\label{eq:hleqpiL}
h\geq c_h>0
\end{align}
on $[t_0,\infty)$ for $t_0\geq0$ and $c_h>0$.
Lemma~\ref{lem:dtinthkappa} implies that there exists a sequence $(t_k)_{k\in\N}$ with $t_k\to\infty$ for $k\to\infty$ so that
\begin{align}\label{eq:dpkappato0}
\int_{\Si_{t_k}}\left(\fracp{\kappa}{s}\right)^{2}ds_{t_k}\to0
\end{align}
for $k\to\infty$.
Hence, there exists $k_0\in\N$ so that for all $k\geq k_0$
\begin{align}\label{eq:dpkappaC1}
\int_{\Si_{t_k}}\left(\fracp{\kappa}{s}\right)^{2}ds_{t_k}<1\,.
\end{align}
We employ~{\cite[Thm.~7.26(ii)]{GilbargTrudinger83}} to obtain that $W^{1,2}(\Sp)$ is compactly embedded in $C^0(\Sp)$. 
Furthermore, $C^0(\Sp)\subset L^2(\Sp)$, and 
$\Vert f\Vert_{L^2(\Sp)}
\leq\sqrt{2\pi}\Vert f\Vert_{C^0(\Sp)}$
for every $f\in C^0(\Sp)$.
Hence, $C^0(\Sp)$ is continuously embedded in $L^2(\Sp)$. 
Let $f\in W^{1,2}(\Sp)$.
By Ehrling's lemma, for all $\vare>0$ there exists a constant $C(\vare)>0$ so that
\begin{align}\label{eq:Ehrling_W12}
\Vert f\Vert_{C^0(\Sp)}\leq\vare\Vert f\Vert_{W^{1,2}(\Sp)}+C(\vare)\Vert f\Vert_{L^2(\Sp)}\,.
\end{align}
Lemma~\ref{lem:dtinthkappa} and~\eqref{eq:dpkappato0} yield $h(t_k)-\kappa(\,\cdot\,,t_k)\in W^{1,2}(\Sp)$ for each $k\in\N$.
Hence, we can use~\eqref{eq:dpkappaC1} and~\eqref{eq:Ehrling_W12} to estimate
\begin{align}\label{eq:maxhlpkappa_0}
\max_{p\in\Sp}|h(t_k)-\kappa(p,t_k)| 
&\leq\vare\left(\int_{\Si_{t_k}}\left(\fracp{\kappa}{s}\right)^{2}\,ds_{t_k}\right)^{1/2}
		+\vare\left(\int_{\Si_{t_k}}(h-\kappa)^2\,ds_{t_k}\right)^{1/2} \notag\\
	&\quad+C(\vare)\left(\int_{\Si_{t_k}}(h-\kappa)^2\,ds_{t_k}\right)^{1/2} \notag\\
&\leq\vare+C(\vare)\left(\int_{\Si_{t_k}}(h-\kappa)^2\,ds_{t_k}\right)^{1/2}
\end{align}
for all $k\geq k_0$.
Choose $\vare=c_h/4$ to deduce with~\eqref{eq:maxhlpkappa_0}
\begin{align}\label{eq:maxhlpkappa}
\max_{p\in\Sp}|h(t_k)-\kappa(p,t_k)|
\leq\frac{c_h}4+C\left(\int_{\Si_{t_k}}(h-\kappa)^2\,ds_{t_k}\right)^{1/2}\,.
\end{align}
Lemma~\ref{lem:dtinthkappa} implies that there exists $k_1\geq k_0$ so that for all $k\geq k_1$
$$\int_{\Si_{t_k}}(h-\kappa)^2\,ds_{t_k}<\left(\frac{c_h}{4C}\right)^2\,.$$ 
By~\eqref{eq:maxhlpkappa},
$$\max_{p\in\Sp}|h(t_{k_1})-\kappa(p,t_{k_1})|\leq\frac{c_h}2\,.$$
With~\eqref{eq:hleqpiL}, we conclude that $\kappa>0$ at $t_{k_1}$. 
From Corollary~\ref{cor:strongmaxpkappa} it follows that $\kappa>0$ for all $t>t_{k_1}$. 
Hence, the claim holds for $T_0=t_{k_1}$.
\end{proof}

\section{Longtime behaviour}\label{sec:longtimebehaviour}

In this section we show that convex solutions of~\eqref{eq:ccf} that exist for all positive times converge exponentially and smoothly to a round circle.
This was already shown in~\cite{Gage86} for the APCSF and in~\cite{Pihan98} for the LPCF.
We repeat and extend the arguments here for $h$ satisfying~\eqref{eq:h_3a} 
for the sake of completeness.
We mostly follow the lines of~\cite[Section~5]{GageHamilton86} for rescaled convex CSF,~\cite{Gage86} for convex APCSF, and~\cite[Chapter~7]{Pihan98} for convex LPCF.
For further details, see~\cite[Chapter~11]{Dittberner18}.

\begin{Lemma}[Isoperimetric inequality, Gage~\cite{Gage83}]\label{lem:intkappaLA}
For a closed, convex $C^2$-curve in the plane,
$$\ints\kappa^2\,ds\geq\frac{\pi L}{A}$$
with equality if and only if the curve is a circle.
\end{Lemma}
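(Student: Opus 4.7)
The plan is to parametrise $\Si$ by the tangent angle $\vartheta\in[0,2\pi)$ and reduce the inequality to an estimate for the support function. Since $\Si$ is closed, convex and $C^2$, the radius of curvature $\rho:=1/\kappa$ is positive, $ds=\rho\,d\vartheta$, and, placing the origin in the interior of the region enclosed by $\Si$, the support function $p(\vartheta):=-\langle X(\vartheta),\nnu(\vartheta)\rangle$ is positive and satisfies the classical identities $\rho=p+p''$, $L=\int_0^{2\pi}p\,d\vartheta$ and $2A=\int_0^{2\pi}p\rho\,d\vartheta=\ints p\,ds$. In particular
$$\ints\kappa^2\,ds=\int_0^{2\pi}\frac{d\vartheta}{\rho(\vartheta)}.$$

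The first step is to observe the identity $\ints p\kappa\,ds=\int_0^{2\pi}p\,d\vartheta=L$ and apply Cauchy--Schwarz:
$$L^{2}=\left(\ints p\kappa\,ds\right)^{2}\leq\left(\ints p^{2}\,ds\right)\left(\ints\kappa^{2}\,ds\right),$$
which reduces the target $\ints\kappa^{2}\,ds\geq\pi L/A$ to the auxiliary claim
$$(\ast)\qquad\ints p^{2}\,ds\leq\frac{LA}{\pi}.$$
Equality in Cauchy--Schwarz forces $p$ to be constant along $\Si$, which happens exactly when $\Si$ is a circle whose centre is the origin.

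The second and main step is to establish $(\ast)$. I would translate the origin to the Steiner point of $\Si$, so that $\int_0^{2\pi}p\cos\vartheta\,d\vartheta=\int_0^{2\pi}p\sin\vartheta\,d\vartheta=0$, and expand $p=a_0+\sum_{n\geq2}(a_n\cos n\vartheta+b_n\sin n\vartheta)$ with $a_0=L/(2\pi)$. Then $L$ and $2A=2\pi a_0^{2}-\pi\sum_{n\geq2}(n^{2}-1)(a_n^{2}+b_n^{2})$ are quadratic in the Fourier coefficients, whereas integration by parts using $\rho=p+p''$ gives
$$\ints p^{2}\,ds=\int_0^{2\pi}p^{3}\,d\vartheta-2\int_0^{2\pi}p\,(p')^{2}\,d\vartheta,$$
which is cubic. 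The claim $(\ast)$ thus reduces to a polynomial inequality in the coefficients $\{a_n,b_n\}$, to be extracted using the convexity constraint $\rho>0$ to control the higher modes relative to $a_0$.

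The main obstacle is precisely this cubic nonlinearity of $(\ast)$: a naive Parseval expansion produces sums of triple products of Fourier coefficients whose sign is not apparent, so one must either combine convexity with a careful grouping of terms or else appeal to the direct geometric symmetrisation argument used in Gage's original 1983 paper. Under either route, tracking the equality case through the Cauchy--Schwarz step and $(\ast)$ forces every Fourier mode of $p$ of order $n\geq2$ to vanish, i.e.\ $p$ is constant and $\Si$ is a circle.
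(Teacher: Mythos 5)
The paper does not actually prove this lemma; it simply cites Gage's 1983 Duke paper, so there is no in-paper proof to compare against. Evaluated on its own terms, your attempt correctly identifies the structure of Gage's argument but stops short of the essential content.

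Your reduction step is sound and is indeed how Gage proceeds: writing $L=\ints p\kappa\,ds$ and applying Cauchy--Schwarz reduces the inequality to the auxiliary claim
$(\ast)\ \ints p^2\,ds\leq LA/\pi$ for a suitable choice of origin. But $(\ast)$ is precisely the nontrivial content of the lemma (it is Gage's key lemma in the 1983 paper), and your proposal does not establish it. You set up the Fourier expansion at the Steiner point, observe that $\ints p^2\,ds=\int_0^{2\pi}p^3\,d\vartheta-2\int_0^{2\pi}p(p')^2\,d\vartheta$ is cubic while $L$ and $A$ are at most quadratic in the coefficients, and then explicitly concede that you cannot control the resulting triple-product sums and would ``appeal to the direct geometric symmetrisation argument used in Gage's original 1983 paper.'' This leaves the proof incomplete: you have reduced the statement to a lemma and then cited Gage for that lemma, which is no more than what the paper itself does. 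Note also that it is not clear that the Steiner point (which kills the first harmonics of $p$ with respect to $d\vartheta$) is the correct choice of origin for $(\ast)$, since the functional $\ints p_O^2\,ds=\int_0^{2\pi}p_O^2\,\rho\,d\vartheta$ is weighted by $\rho$, not by the uniform measure $d\vartheta$; Gage's proof constructs the origin by a different geometric argument rather than by Fourier minimisation.

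A secondary issue is the equality discussion. Equality in the Cauchy--Schwarz step $\bigl(\ints p\kappa\,ds\bigr)^2\leq\bigl(\ints p^2\,ds\bigr)\bigl(\ints\kappa^2\,ds\bigr)$ forces $p$ to be proportional to $\kappa$, not $p$ to be constant as you assert. To conclude that the curve is a circle one must combine this with the equality case of $(\ast)$, which again has not been established. So both the inequality and its rigidity case rest on the unproven claim $(\ast)$.
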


\begin{Lemma}
\label{lem:isoleqexp}
Let $\Si_0=X_0(\Sp)$ be a smooth, embedded, convex curve.
Let $X:\Sp\!\times[0,\infty)\to\R^2$ be a solution of~\eqref{eq:ccf} with initial curve $\Si_0$.
Then there exists a constant $C=C(\Si_0)>0$, such that, for all $t>0$,
$$\left(\frac{L^2}{A}-4\pi\right)\leq C\exp\!\left(-\int_0^t\frac{2\pi}{A}\,d\tau-\log\frac{A_t}{A_0}\right)\,.$$
\end{Lemma}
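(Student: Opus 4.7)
The plan is to set $D(t) := L_t^2/A_t - 4\pi$ (the isoperimetric deficit, nonnegative by the classical isoperimetric inequality) and derive a differential inequality of the form $\frac{d}{dt}\log D \leq -\frac{2\pi}{A} - \frac{d}{dt}\log A$, which integrates directly to the claimed bound.

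First I would compute $\frac{d}{dt}(L^2/A)$ using Lemma~\ref{lem:dtL}:
\begin{align*}
\fracd{}{t}\!\left(\frac{L^2}{A}\right)
&=\frac{2L}{A}\!\left(2\pi h-\intst\kappa^2\,ds_t\right)-\frac{L^2}{A^2}(hL-2\pi) \\
&=\frac{4\pi Lh}{A}-\frac{2L}{A}\intst\kappa^2\,ds_t-\frac{hL^3}{A^2}+\frac{2\pi L^2}{A^2}.
\end{align*}
Since $\St$ is convex, Lemma~\ref{lem:intkappaLA} gives $\intst\kappa^2\,ds_t\geq \pi L/A$. Substituting this into the $-\frac{2L}{A}\intst\kappa^2$ term cancels the $+2\pi L^2/A^2$ term and leaves
$$\fracd{}{t}\!\left(\frac{L^2}{A}\right)
\leq\frac{4\pi Lh}{A}-\frac{hL^3}{A^2}
=-\frac{Lh}{A}\!\left(\frac{L^2}{A}-4\pi\right)
=-\frac{Lh}{A}\,D.$$
Since $D\geq 0$, this is already a decay inequality; note $\frac{d}{dt}D = \frac{d}{dt}(L^2/A)$ since $4\pi$ is constant.

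The key algebraic observation is that by Lemma~\ref{lem:dtL}, $Lh = 2\pi + \frac{dA}{dt}$, so
$$\frac{Lh}{A}=\frac{2\pi}{A}+\frac1A\fracd{A}{t}=\frac{2\pi}{A}+\fracd{}{t}\log A.$$
Thus, on the (open) set where $D>0$,
$$\fracd{}{t}\log D\leq-\frac{2\pi}{A}-\fracd{}{t}\log A.$$
Integrating from $0$ to $t$ yields
$$\log\frac{D(t)}{D(0)}\leq-\int_0^t\frac{2\pi}{A}\,d\tau-\log\frac{A_t}{A_0},$$
which exponentiates to the claim with $C=L_0^2/A_0-4\pi$. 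If $D(t_0)=0$ at some $t_0$, then $\St$ is a circle at $t_0$, and by uniqueness of solutions (together with the fact that circles remain circles under~\eqref{eq:ccf} with $h$ as in~\eqref{eq:h_3a}) the inequality holds trivially thereafter.

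The only mild obstacle is making the manipulation $\frac{d}{dt}\log D \leq \cdots$ rigorous when $D$ can vanish, but this is handled as above by the circle case. The main content of the argument is the exact cancellation between the $+2\pi L^2/A^2$ term and the isoperimetric lower bound on $\intst\kappa^2\,ds_t$, which is what makes the rate $-Lh/A$ (rather than something weaker) come out.
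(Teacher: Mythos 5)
Your proof is correct and takes essentially the same route as the paper's: differentiate $L^2/A-4\pi$ using Lemma~\ref{lem:dtL}, apply Gage's inequality (Lemma~\ref{lem:intkappaLA}) to obtain $\fracd{}{t}\big(L^2/A-4\pi\big)\leq-\frac{hL}{A}\big(L^2/A-4\pi\big)$, and rewrite $hL/A=2\pi/A+\fracd{}{t}\log A$ via~\eqref{eq:hdtAdtL} before integrating. Your explicit treatment of the degenerate case $L^2/A-4\pi=0$ (which the paper elides) is fine, although it can be avoided entirely by applying Gronwall to the deficit itself, i.e.\ observing that $\big(L^2/A-4\pi\big)\exp\big(\int_0^t hL/A\,d\tau\big)$ is non-increasing, rather than passing to the logarithm.
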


\begin{proof}
We follow the lines of~{\cite[Cor.~2.4]{Gage86}} and~{\cite[Lem.~7.7]{Pihan98}} and use Lemma~\ref{lem:intkappaLA} to estimate for $t>0$
\begin{align*}
\fracd{}{t}\!\left(\frac{L^2}{A}-4\pi\right)
&=-\frac{2L}{A}\left(\intst\kappa^2\,ds_t-2\pi h\right)
	-\frac{L^2}{A^2}\left(Lh-2\pi\right) \\
&\leq-\frac{L}{A}\left(\frac{2\pi L}A-4\pi h+\frac{L^2}{A}h-\frac{2\pi L}{A}\right)
=-\frac{hL}{A}\left(\frac{L^2}{A}-4\pi\right)\,.
\end{align*}
By~\eqref{eq:hdtAdtL},
\[\int_0^t\frac{hL}{A}\,d\tau
=\int_0^t\frac{2\pi}{A}+\fracd{}{t}\log A\,d\tau
=\int_0^t\frac{2\pi}{A}\,d\tau+\log\frac{A_t}{A_0}\,.\qedhere\]
\end{proof}

\begin{Prop}[Bonnesen isoperimetric inequality,~{\cite[Thm.~4\,(21)]{Osserman79}}]\label{prop:Osserman}
For an embedded, closed curve $\Si$ in the plane, 
$$\frac{L^2}{A}-4\pi\geq\frac{\pi^2}{A}(r_{\cir}-r_{\inner})^2\geq0\,,$$
where $r_{\cir}$ and $r_{\inner}$ are the circumscribed and inscribed radius of $\Si$.
\end{Prop}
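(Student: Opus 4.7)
The statement is Bonnesen's classical strengthened isoperimetric inequality, so my plan is to reduce it to the sharper pointwise Bonnesen inequality
\begin{align*}
\pi r^2 - L r + A \le 0 \qquad \text{for every } r \in [r_{\inner}, r_{\cir}],
\end{align*}
valid for the (convex hull of the) region bounded by $\Si$. Granted this, the proposition follows by a short discriminant argument: the quadratic $p(r) = \pi r^2 - L r + A$ opens upward and is nonpositive on the interval $[r_{\inner}, r_{\cir}]$, so this interval is contained between the two real roots of $p$. In particular the roots are real, hence $L^2 \ge 4\pi A$, and
\begin{align*}
r_{\cir} - r_{\inner} \;\le\; r_+ - r_- \;=\; \frac{\sqrt{L^2 - 4\pi A}}{\pi}.
\end{align*}
Squaring, multiplying by $\pi^2/A$, and rearranging produces exactly $L^2/A - 4\pi \ge (\pi^2/A)(r_{\cir}-r_{\inner})^2$, and nonnegativity of the right-hand side is automatic.

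The main obstacle, therefore, is establishing the pointwise inequality $rL - \pi r^2 - A \ge 0$ for $r \in [r_{\inner}, r_{\cir}]$. My preferred route is integral-geometric, via Cauchy's perimeter formula and the Blaschke kinematic formula. Concretely: for a fixed disc $D_r$ of radius $r$, compute $\int_{\R^2} n(c)\, dc$, where $n(c)$ is the number of arcs of $\Si$ inside the translated disc $D_r + c$; a standard Crofton-type evaluation shows this integral equals $2rL$. On the other hand, $n(c) \ge 2$ whenever $\pa D_r(c)$ meets $\Si$ transversally (which, for $r \in (r_{\inner}, r_{\cir})$, happens on a set of positive measure obtainable by sweeping the centre through the plane), while $n(c) \ge 1$ for a larger measurable set related to $A$ and the Minkowski content $A + \pi r^2$. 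Comparing these two expressions yields $2rL \ge 2(A + \pi r^2)$, which is the required local Bonnesen inequality.

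As an alternative plan avoiding kinematic integrals, I would sweep an auxiliary disc: for $r \in [r_{\inner}, r_{\cir}]$, a continuity argument produces a centre $c$ for which $\pa D_r(c)$ meets $\Si$ in at least two points, splitting $\Si$ and $\pa D_r(c)$ into complementary arcs whose enclosed regions can be compared to the whole of $K$. Estimating the arc-length and area contributions of these pieces using the classical isoperimetric inequality on the pieces (each of which is bounded by a curve and a circular arc) again gives $rL \ge A + \pi r^2$. Either route produces the sharper Bonnesen inequality, after which the discriminant step above finishes the proof.

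Finally, for the version stated here, which refers to embedded (not necessarily convex) curves, I would apply the argument to the convex hull $\hat\Si$ of $\Si$: the enclosed area $A$ satisfies $A \le \hat A$ while $L \ge \hat L$, and the inradius/circumradius of $\Si$ and of its convex hull agree; the inequality for $\hat\Si$ therefore implies the claimed one for $\Si$. The only delicate point is bookkeeping the direction of these inequalities under passage to the convex hull, which is the reason Osserman's survey \cite{Osserman79} is the natural reference.
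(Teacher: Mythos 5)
The paper does not actually prove this proposition; it quotes it verbatim from Osserman's survey, so the only question is whether your argument stands on its own. Your skeleton --- establish $rL\ge A+\pi r^2$ for every $r\in[r_{\inner},r_{\cir}]$ and then run the discriminant argument on $p(r)=\pi r^2-Lr+A$ --- is exactly the classical route (it is the proof in the cited reference), and the discriminant step is correct as written. Two of your supporting steps, however, are genuinely flawed.

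First, the convex-hull reduction at the end does not work. For a non-convex Jordan domain $K$ with convex hull $\hat K$ one has $r_{\cir}(K)=r_{\cir}(\hat K)$ but only $r_{\inner}(K)\le r_{\inner}(\hat K)$, with strict inequality in general (a thin C-shaped region has inradius half its thickness, while its hull is nearly a disc). Hence $r_{\cir}-r_{\inner}\ge \hat r_{\cir}-\hat r_{\inner}$, so the right-hand side of the Bonnesen inequality \emph{increases} when you pass from the hull back to $\Si$, and the inequality for $\hat\Si$ does not imply the one for $\Si$. Second, the quantitative comparison in your kinematic route is wrong: the measure of $\{c: n(c)\ge1\}$ is not bounded below by $2(A+\pi r^2)$. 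Already for $\Si$ a circle of radius $\rho$ and $r<\rho$ that measure is the area of the annulus $\rho-r\le|c|\le\rho+r$, namely $4\pi\rho r=2Lr$, which is strictly smaller than $2(A+\pi r^2)=2\pi(\rho^2+r^2)$ whenever $\rho\ne r$. The correct comparison is $n(c)\ge\chi\big(K\cap D_r(c)\big)$ --- each component of $K\cap D_r(c)$ is simply connected and is bounded by at least one arc of $\Si$ lying in $D_r(c)$, provided neither set contains the other, which is guaranteed for $r_{\inner}\le r\le r_{\cir}$ --- combined with the kinematic formula $\int_{\R^2}\chi\big(K\cap D_r(c)\big)\,dc=A+\pi r^2+Lr$. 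Together with $\int_{\R^2}n(c)\,dc=2rL$ this gives $2rL\ge A+\pi r^2+Lr$, i.e.\ $rL\ge A+\pi r^2$. This corrected argument is valid for arbitrary embedded closed curves, so once it is in place the convex-hull detour is unnecessary and should simply be deleted.
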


\begin{Prop}
\label{prop:C0convergence}
Let $\Si_0=X_0(\Sp)$ be a smooth, embedded, convex curve.
Let $X:\Sp\!\times[0,\infty)\to\R^2$ be a solution of~\eqref{eq:ccf} with initial curve $\Si_0$ and $h$ satisfying~\eqref{eq:h_3a}.
Then 
$$r_{\cir}(t)-r_{\inner}(t)\to0$$
for $t\to\infty$ and $\St=X(\Sp\!,t)$ converges in $C^0$ to a circle of radius 
$$R:=\lim_{t\to\infty}\frac{L_t}{2\pi}=\lim_{t\to\infty}\sqrt{\frac{A_t}{\pi}}\in(0,\infty)\,.$$
Moreover, for all $\beta\in(0,1)$ there exist a time $t_0>0$ and a constant $C>0$ such that, for all $t\geq t_0$,
$$\left(\frac{L^2}{4\pi}-A\right)
	\leq C\exp\!\left(-\frac{2\beta t}{R^2}\right)
\quad\text{ and }\quad
\frac{L}{2\pi}\intst\kappa^2\,ds_t-2\pi
	\leq C\exp\!\left(-\frac{\beta t}{R^2}\right)\,.$$
\end{Prop}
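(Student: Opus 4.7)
The plan is to combine the exponential-in-integral estimate of Lemma~\ref{lem:isoleqexp} with Bonnesen's inequality (Proposition~\ref{prop:Osserman}) to obtain $C^0$ convergence and identify the limit radius, and then to quantify the two exponential decay rates using the uniform bounds of Lemma~\ref{lem:AL}.

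For the qualitative convergence, Lemma~\ref{lem:AL} supplies uniform bounds $0<c\leq A_t,L_t\leq C$ together with monotonicity, so $\int_0^t 2\pi/A\,d\tau\to\infty$. Lemma~\ref{lem:isoleqexp} therefore yields $L^2/A-4\pi\to 0$, and Proposition~\ref{prop:Osserman} converts this into $r_{\cir}(t)-r_{\inner}(t)\to 0$. Monotone convergence gives limits $A_\infty,L_\infty\in(0,\infty)$, and the vanishing isoperimetric deficit forces $L_\infty^2=4\pi A_\infty$, so $R:=L_\infty/(2\pi)=\sqrt{A_\infty/\pi}$ is well-defined. A Blaschke selection argument, after translating to keep an inscribed center fixed, upgrades $r_{\cir}-r_{\inner}\to 0$ to $C^0$ convergence of $\St$ to a circle of radius $R$.

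For the first exponential bound, fix $\beta\in(0,1)$. Since $A_t\to\pi R^2$, there is $t_1\geq 0$ with $A_t\leq\pi R^2/\beta$ for $t\geq t_1$, so
\[
\int_0^t\frac{2\pi}{A}\,d\tau\geq\frac{2\beta(t-t_1)}{R^2}+\int_0^{t_1}\frac{2\pi}{A}\,d\tau.
\]
Combined with Lemma~\ref{lem:isoleqexp} and the uniform lower bound on $A_t$, this gives $(L^2/A-4\pi)(t)\leq C e^{-2\beta t/R^2}$, and multiplying by the uniformly bounded factor $A_t/(4\pi)$ yields the desired $L^2/(4\pi)-A\leq C e^{-2\beta t/R^2}$.

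The main obstacle is the second exponential bound. Set $f(t):=L^2/(4\pi)-A\geq 0$ and $g(t):=L/(2\pi)\intst\kappa^2-2\pi\geq 0$ (the latter by Cauchy--Schwarz). Identity~\eqref{eq:L2piintkappa} gives $g=-f'$, so converting exponential decay of $f$ into pointwise decay of $g$ requires one-sided control on $g'$. The plan is to establish a uniform lower bound $g'(t)\geq-M$ on $[t_1,\infty)$: by the first bound, $r_{\cir},r_{\inner}\to R$, hence the curvature is uniformly bounded above and below on $[t_1,\infty)$; a standard parabolic bootstrap (as in the proof of Proposition~\ref{prop:T<infty}) then supplies uniform bounds on $\intst(\partial_s\kappa)^2\,ds_t$ and $\intst\kappa^4\,ds_t$. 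Differentiating $g$ via Lemma~\ref{lem:evolutionequations} and estimating each term with Lemma~\ref{lem:inthkappa} gives $g'\geq-M$. With this, for any $\tau>0$ and $t\geq t_1$,
\[
Ce^{-2\beta t/R^2}\geq f(t)-f(t+\tau)=\int_t^{t+\tau}g(s)\,ds\geq g(t)\tau-\frac{M\tau^2}{2},
\]
so $g(t)\leq C\tau^{-1}e^{-2\beta t/R^2}+M\tau/2$. Optimizing at $\tau=\sqrt{2Ce^{-2\beta t/R^2}/M}$ produces $g(t)\leq C'e^{-\beta t/R^2}$; the loss of a factor of two in the exponential rate is intrinsic to this mean-value argument and is reflected in the statement's $\beta$ versus $2\beta$.
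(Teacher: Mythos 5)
Your overall architecture matches the paper's: Lemma~\ref{lem:AL}, Lemma~\ref{lem:isoleqexp} and Proposition~\ref{prop:Osserman} give the qualitative convergence and identify $R$; choosing $t_1$ with $A_t\leq\pi R^2/\beta$ gives the first exponential bound; and an interpolation between $f:=L^2/4\pi-A$ and its second derivative converts decay of $f$ into decay of $g:=-f'=\frac{L}{2\pi}\intst\kappa^2\,ds_t-2\pi$. Those parts are correct and are essentially what the paper does.

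The gap is in your justification of the one-sided bound $g'\geq-M$. You claim that $r_{\cir},r_{\inner}\to R$ forces the curvature to be uniformly bounded above on $[t_1,\infty)$. This is false for convex curves: a regular polygon with slightly rounded corners has $r_{\cir}-r_{\inner}$ arbitrarily small while $\max\kappa$ is arbitrarily large, so closeness of the in- and circumradius gives no upper curvature bound. Indeed, a uniform-in-time curvature bound is precisely what Lemmas~\ref{lem:intdthetakappa}--\ref{lem:kappamaxrin} and Corollary~\ref{cor:kappamaxrinvare} are later needed for, and those results \emph{depend on} Proposition~\ref{prop:C0convergence}, so invoking such a bound here is circular. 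Without it you have no uniform control of $\intst(\partial_s\kappa)^2\,ds_t$ or $\intst\kappa^3\,ds_t$ (Lemma~\ref{lem:dtinthkappa} only gives time-integrability of the former), hence no bound $g'\geq-M$. And your mean-value step genuinely needs that sign: $\int_t^{t+\tau}g\geq g(t)\tau-M\tau^2/2$ rests on $g(s)\geq g(t)-M(s-t)$ for $s\geq t$, i.e.\ on $g'\geq-M$.

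What is actually available is the \emph{opposite} one-sided bound: Lemma~\ref{lem:dthlp} together with the uniform bound on $\intst\kappa^2\,ds_t$ from Lemma~\ref{lem:inthkappa} gives $g'\leq M$ on $[t_1,\infty)$. With this the argument must be run backward in time: if $g(t)=a$, then $g(s)\geq a-M(t-s)$ for $s\leq t$, so
$$\frac{a^2}{2M}\leq\int_{t-a/M}^{t}g\,ds=f\!\left(t-\frac aM\right)-f(t)\leq C\exp\!\left(-\frac{2\beta(t-a/M)}{R^2}\right)\leq C'\exp\!\left(-\frac{2\beta t}{R^2}\right)\,,$$
using that $a$ is uniformly bounded by Lemma~\ref{lem:inthkappa}; this yields $g(t)\leq C''e^{-\beta t/R^2}$, with the same halving of the rate you observed. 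This corrected step is equivalent to the Ehrling-type interpolation the paper uses, and with it the remainder of your proof goes through.
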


\begin{proof}
By Lemma~\ref{lem:AL},
$$\int_0^t\frac{2\pi}{A}\,d\tau+\log\frac{A_t}{A_0}
\geq\frac{2\pi t}{C}+\log\frac{c}{A_0}
\to\infty$$
for $t\to\infty$.
Lemma~\ref{lem:isoleqexp} and the bounds from Lemma~\ref{lem:AL} imply
$$\frac{L}{2\pi}-\sqrt{\frac{A}\pi}\to0$$
for $t\to\infty$.
Also, $L/2\pi=\sqrt{A/\pi}$ only holds on a circle.
Proposition~\ref{prop:Osserman} yields $r_{\cir}(t)-r_{\inner}(t)\to0$ for $t\to\infty$.
Let $\beta\in(0,1)$ and $\vare(\beta,R)>0$ so that
$$\left(1-\vare R^2\right)\geq\beta\,.$$
We can choose $t_0(\beta)>0$ so that for all $t\geq t_0$,
$$\left(\frac1{R^2}-\vare\right)\leq\frac\pi A\,.$$
Hence,
$$-\int_0^t\frac{2\pi}{A}\,d\tau
\leq-2\left(1-\vare R^2\right)\frac{t}{R^2}
\leq-\frac{2\beta t}{R^2}$$
and again by the bounds on $A$ from Lemma~\ref{lem:AL},
\begin{align}\label{eq:L4piAexp}
\left(\frac{L^2}{4\pi}-A\right)
=\frac{A}{4\pi}\left(\frac{L^2}{A}-4\pi\right)
\leq C\exp\!\left(-\frac{2\beta t}{R^2}\right)
\end{align}
for all $t\geq t_0$.
Let $f\in C^2([0,\infty))$.
Since $C^2([0,\infty))$ is compactly embedded in $C^1([0,\infty))$ and $C^1([0,\infty))$ is continuously embedded in $C^0([0,\infty))$, Ehrling's Lemma yields that for every $\delta>0$ there exists $C(\delta)>0$ so that
$$\Vert f\Vert_{C^1([0,\infty))}\leq\delta\Vert f\Vert_{C^2([0,\infty))}+C(\delta)\Vert f\Vert_{C^0([0,\infty))}\,.$$
We set $\delta=1/2$ and conclude
\begin{align}\label{eq:dtfEhrlich}
\sup_{[0,\infty)}\left|\fracd{f}{t}\right|
\leq\sup_{[0,\infty)}\left|\fracd{^2f}{t^2}\right|+C\sup_{[0,\infty)}|f|\,.
\end{align}
Let $\eta>0$ and define $f_\eta:[0,\infty)\to\R$ by $f_\eta(t):=f(\eta t)$.
Then
$$\fracd{f_\eta}{t}(t)=\eta\fracd{f}{t}(\eta t)
\qquad\text{ and }\qquad
\fracd{^2f_\eta}{t^2}(t)=\eta^2\fracd{^2f}{t^2}(\eta t)$$
as well as with~\eqref{eq:dtfEhrlich},
\begin{align}\label{eq:dtfeta}
\sup_{[0,\infty)}\left|\fracd{f}{t}\right|
&=\frac1\eta\sup_{[0,\infty)}\left|\fracd{f_\eta}{t}\right|
\leq\frac1\eta\sup_{[0,\infty)}\left|\fracd{^2f_\eta}{t^2}\right|+\frac C\eta\sup_{[0,\infty)}|f_\eta| \notag\\
&=\eta\sup_{[0,\infty)}\left|\fracd{^2f}{t^2}\right|+\frac C\eta\sup_{[0,\infty)}|f|\,.
\end{align}
By Lemmata~\ref{lem:dthlp} and~\ref{lem:inthkappa}, there exists a time $t_1\geq t_0$ so that for all $t\geq t_1$,
\begin{align}\label{eq:L2piintkappaleqc}
\fracd{}{t}\left(\frac{L}{2\pi}\intst\kappa^2\,ds_t-2\pi\right)
\leq C\left(C+\frac{L}{2\pi}\intst\kappa^2\,ds_t-2\pi\right)^3
\leq C(C+1)^3\,.
\end{align}
We choose
$$\eta=\exp\!\left(-\frac{\beta t}{R^2}\right)$$
to obtain by~\eqref{eq:L2piintkappa},~\eqref{eq:L4piAexp},~\eqref{eq:dtfeta} and~\eqref{eq:L2piintkappaleqc}, for all $t\geq t_1$,
\begin{align*}
\frac{L}{2\pi}\intst\kappa^2\,ds_t-2\pi
&\leq\sup_{[t,\infty)}\left(\frac{L}{2\pi}\intst\kappa^2\,ds_t-2\pi\right) \\
&\leq\eta c\sup_{[t,\infty)}\left(1+\frac{L}{2\pi}\intst\kappa^2\,ds_t-2\pi\right)^3
	+\frac C\eta\sup_{[t,\infty)}\left|\frac{L^2}{4\pi}-A\right| \\
&\leq C\exp\!\left(-\frac{\beta t}{R^2}\right)\,.\qedhere
\end{align*}
\end{proof}

By Corollary~\ref{cor:strongmaxpkappa}, $\St$ is strictly convex for all $t>0$.
Like introduced in Section~\ref{sec:theta2} let $\vartheta:\Sp\!\times[0,\infty)\to\R$ be the angle between the $x_1$-axis and the tangent vector at the point $X(p,t)$.
Since $\St$ is strictly convex on $(0,\infty)$, $\vartheta(\,\cdot\,,t)$ is injective for each $t\in(0,\infty)$.
We want to use $\vartheta$ as spatial coordinate and define $\tau$ to be a new time variable so that $\tau=t$ as well as
\begin{align}\label{eq:taut}
\fracd{\tau}{t}=1\qquad\text{ and }\qquad\fracp{\vartheta}{\tau}=0\,.
\end{align}
The spatial derivatives transforms according to
$\frac1v\fracp{}{p}=\fracp{}{s}=\kappa\fracp{}{\vartheta}$.
In the following, we use the coordinates $(\vartheta,\tau)$ on $\Sp\!\times(0,\infty)$.

\begin{Lemma}[Gage--Hamilton~{\cite[Lem.~4.1.3]{GageHamilton86}} and Pihan~{\cite[Lem.~6.12]{Pihan98}}]\label{lem:dtaukappa}
Let $X:\Sp\!\times(0,\infty)\to\R^2$ be a smooth, strictly convex solution of~\eqref{eq:ccf}.
Then, for $\tau\in(0,\infty)$,
$$\fracp{\kappa}{\tau}=\kappa^2\fracp{^2\kappa}{\vartheta^2}-(h-\kappa)\kappa^2\,.$$
\end{Lemma}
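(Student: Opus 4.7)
The plan is to reduce the statement to a chain-rule computation converting the known evolution of $\kappa$ in the coordinates $(p,t)$ of Lemma~\ref{lem:evolutionequations} into the new coordinates $(\vartheta,\tau)$. Since $\St$ is strictly convex on $(0,\infty)$ by hypothesis, Corollary~\ref{cor:strongmaxpkappa} gives $\kappa>0$, so $\vartheta(\,\cdot\,,t):\Sp\to\Sp$ is a diffeomorphism for every $t$, and $(\vartheta,\tau)$ is a legitimate coordinate system on $\Sp\!\times(0,\infty)$.

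First I would differentiate the identity $\vartheta=\vartheta(p(\vartheta,\tau),\tau)$ at fixed $\vartheta$ to get
\[
\fracp{p}{\tau}\bigg|_\vartheta = -\frac{\fracp{\vartheta}{t}\big|_p}{\fracp{\vartheta}{p}\big|_t}.
\]
Lemma~\ref{lem:dsvartheta} gives $\fracp{\vartheta}{t}|_p=\fracp{\kappa}{s}$ and $\fracp{\vartheta}{p}|_t=v\kappa$, and together with $\fracp{}{s}=\frac1v\fracp{}{p}=\kappa\fracp{}{\vartheta}$ this yields
\[
\fracp{\kappa}{\tau}\bigg|_\vartheta = \fracp{\kappa}{t}\bigg|_p - \fracp{\kappa}{s}\,\fracp{\kappa}{\vartheta} = \fracp{\kappa}{t}\bigg|_p - \kappa\left(\fracp{\kappa}{\vartheta}\right)^{2}.
\]

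Next I would rewrite the spatial second derivative in $\vartheta$-coordinates using $\fracp{}{s}=\kappa\fracp{}{\vartheta}$ twice,
\[
\fracp{^2\kappa}{s^2} = \kappa\fracp{}{\vartheta}\!\left(\kappa\fracp{\kappa}{\vartheta}\right) = \kappa^2\fracp{^2\kappa}{\vartheta^2} + \kappa\left(\fracp{\kappa}{\vartheta}\right)^{2},
\]
and insert the evolution equation $\fracp{\kappa}{t}|_p = \fracp{^2\kappa}{s^2} - (h-\kappa)\kappa^2$ from Lemma~\ref{lem:evolutionequations}. Substituting into the chain-rule identity, the two terms $\kappa(\fracp{\kappa}{\vartheta})^{2}$ cancel, leaving
\[
\fracp{\kappa}{\tau} = \kappa^2\fracp{^2\kappa}{\vartheta^2} - (h-\kappa)\kappa^2,
\]
as claimed.

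There is no real obstacle here; the only point requiring care is the bookkeeping that $\fracp{}{t}$ in Lemma~\ref{lem:evolutionequations} is taken at fixed $p$, while $\fracp{}{\tau}$ in the statement is taken at fixed $\vartheta$, which is precisely the correction term $-\kappa(\fracp{\kappa}{\vartheta})^{2}$ that cancels with the lower-order piece of $\fracp{^2\kappa}{s^2}$ in $\vartheta$-coordinates.
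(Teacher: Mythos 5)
Your computation is correct and is exactly the standard chain-rule argument that the cited references (Gage--Hamilton, Lem.~4.1.3, and Pihan, Lem.~6.12) give for this lemma; the paper itself supplies no proof beyond the citation. The only cosmetic remark is that you do not need Corollary~\ref{cor:strongmaxpkappa} to get $\kappa>0$, since strict convexity is already the hypothesis.
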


For $\tau>0$, we define
\begin{align}\label{eq:defmtau}
m(\tau):=\max_{\bar\tau\in[0,\tau]}\max_{\vartheta\in\Sp}\kappa(\vartheta,\bar\tau)\,.
\end{align}

\begin{Lemma}
\label{lem:intdthetakappa}
Let $\Si_0=X_0(\Sp)$ be a smooth, embedded, convex curve.
Let $X:\Sp\!\times[0,\infty)\to\R^2$ be a solution of~\eqref{eq:ccf} with initial curve $\Si_0$ and $h$ satisfying~\eqref{eq:h_3a}. 
Then there exists a constant $C(\Si_0)>0$ such that, for all $\tau>0$,
$$\intsp\left(\fracp{\kappa}{\vartheta}\right)^{2}d\vartheta
\leq\intsp\kappa^2\,d\vartheta+C(m(\tau)+1)\,.$$
\end{Lemma}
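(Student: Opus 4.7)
The plan is to convert the left-hand side into a time derivative of an entropy via integration by parts together with the evolution equation from Lemma~\ref{lem:dtaukappa}, and then to control the resulting terms using the a priori bounds from Lemma~\ref{lem:inthkappa} and the Gagliardo--Nirenberg inequality (Theorem~\ref{thm:GN}).

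Strict convexity for $\tau>0$ (Corollary~\ref{cor:strongmaxpkappa}) makes the angle parametrisation valid and gives $\intst\kappa\,ds_t=2\pi$. Integration by parts on $\Sp$ yields $\intsp\kappa_\vartheta^2\,d\vartheta=-\intsp\kappa\,\kappa_{\vartheta\vartheta}\,d\vartheta$. Solving the evolution equation of Lemma~\ref{lem:dtaukappa} for $\kappa_{\vartheta\vartheta}=\kappa_\tau/\kappa^2+h-\kappa$, substituting, and using $d\vartheta=\kappa\,ds_t$ so that $\intsp\kappa\,d\vartheta=\intst\kappa^2\,ds_t$, one obtains the pointwise-in-time identity
\begin{align*}
\intsp\kappa_\vartheta^2\,d\vartheta-\intsp\kappa^2\,d\vartheta=-\fracd{}{\tau}\intsp\log\kappa\,d\vartheta-h\intst\kappa^2\,ds_t.
\end{align*}
Lemma~\ref{lem:inthkappa} (which uses the hypothesis~\eqref{eq:h_3a}) bounds both $h$ and $\intst\kappa^2\,ds_t$ uniformly on $[0,\infty)$, so the last term is $O(1)$ and absorbable into $C(m+1)$. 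It thus suffices to prove $-\fracd{}{\tau}\intsp\log\kappa\,d\vartheta\leq C(m(\tau)+1)$.

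For this, I would rewrite $\intsp\log\kappa\,d\vartheta=\intst\kappa\log\kappa\,ds_t$ and compute its time derivative directly in arc length via Lemma~\ref{lem:evolutionequations}; after integration by parts this gives an expression of the form $-\intst\kappa_s^2/\kappa\,ds_t+\intst\kappa^3\,ds_t-h\intst\kappa^2\,ds_t$. Applying Theorem~\ref{thm:GN} to $\kappa^{1/2}$ on $\Sp_{L_t/2\pi}$ with $p=6$ and $\sigma=1/3$ interpolates $\intst\kappa^3\,ds_t=\|\kappa^{1/2}\|_{L^6}^6$ against $\intst\kappa_s^2/\kappa\,ds_t=4\intst\big((\sqrt\kappa)_s\big)^2\,ds_t$, with lower-order constants depending only on the uniform bounds on $\intst\kappa\,ds_t=2\pi$ (from convexity) and $\intst\kappa^2\,ds_t$ (from Lemma~\ref{lem:inthkappa}). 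Combining this interpolation with the pointwise bound $\kappa\leq m(\tau)$ and Young's inequality absorbs the remaining non-sign-definite terms into $C(m+1)$.

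The main obstacle is the self-referential structure of the entropy estimate: a naive differentiation of $\intsp\log\kappa\,d\vartheta$ reproduces precisely the quantity $\intsp\kappa_\vartheta^2\,d\vartheta-\intsp\kappa^2\,d\vartheta$ that one is trying to bound. Closing the loop requires using the $L^\infty$ bound $\kappa\leq m(\tau)$ in an essential way in the Gagliardo--Nirenberg step, and this is precisely where the $m(\tau)$-dependence of the final constant enters.
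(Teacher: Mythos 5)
Your opening identity is correct, but the reduction to the pointwise\mbox{-}in\mbox{-}time bound $-\fracd{}{\tau}\intsp\log\kappa\,d\vartheta\leq C(m(\tau)+1)$ is circular, and the Gagliardo--Nirenberg step you propose cannot close it. Since $\kappa_s=\kappa\kappa_\vartheta$ and $ds_t=d\vartheta/\kappa$, one has $\intst\kappa_s^2/\kappa\,ds_t=\intsp\kappa_\vartheta^2\,d\vartheta$ and $\intst\kappa^3\,ds_t=\intsp\kappa^2\,d\vartheta$, so your arc-length computation returns exactly
$$-\fracd{}{\tau}\intsp\log\kappa\,d\vartheta=\intsp\left(\fracp{\kappa}{\vartheta}\right)^{2}d\vartheta-\intsp\kappa^2\,d\vartheta+h\intst\kappa^2\,ds_t\,,$$
i.e.\ (up to the uniformly bounded term $h\intst\kappa^2\,ds_t$) precisely the quantity the lemma asserts is $\leq C(m+1)$. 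Your interpolation then runs in the wrong direction: Theorem~\ref{thm:GN} applied to $\sqrt\kappa$ bounds $\intst\kappa^3\,ds_t$ from \emph{above} by $\intst\kappa_s^2/\kappa\,ds_t$ plus lower-order terms, but in the display above $\intst\kappa_s^2/\kappa\,ds_t$ is the \emph{positive} term that must be controlled and $\intst\kappa^3\,ds_t$ enters with a minus sign, so the dominant term is left unestimated. No estimate performed at the single time $\tau$ with constants depending only on $m(\tau)$ and the uniform flow bounds can succeed: a convex curve with $\kappa(\vartheta)=1+\vare\sin(N\vartheta)$, $N\geq2$ (which satisfies the closing conditions), has $m\leq2$ and $\intsp\kappa^2\,d\vartheta\leq3\pi$ while $\intsp\kappa_\vartheta^2\,d\vartheta=\vare^2N^2\pi$ is arbitrarily large. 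The inequality is therefore not a statement about the curve at time $\tau$ alone; it must be propagated from $t=0$, which is also why the constant depends on $\Si_0$.

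The paper's proof supplies exactly this time-integrated structure. It considers $E(\tau):=\intsp\big(\kappa^2-\kappa_\vartheta^2-2h\kappa\big)\,d\vartheta$ and computes, using Lemma~\ref{lem:dtaukappa} and integration by parts,
$$\fracd{E}{\tau}=2\intsp\kappa^2\left(\kappa+\fracp{^2\kappa}{\vartheta^2}-h\right)^{2}d\vartheta-2\fracd{h}{\tau}\intsp\kappa\,d\vartheta\geq-2\fracd{h}{\tau}\intsp\kappa\,d\vartheta\,;$$
the perfect square is the decisive cancellation that your entropy $\intsp\log\kappa\,d\vartheta$ does not produce. The error term is then integrated over $[0,\tau]$: writing $\fracd{h}{\tau}$ out via~\eqref{eq:h_3a}, using the monotonicity of $L$ from Lemma~\ref{lem:AL} and $\intsp\kappa\,d\vartheta=\intst\kappa^2\,ds_t\leq2\pi\max_{\Sp}\kappa\leq2\pi m(\tau)$ gives $\int_0^\tau\fracd{h}{\bar\tau}\intsp\kappa\,d\vartheta\,d\bar\tau\leq C(m(\tau)+1)$. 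Hence $E(\tau)\geq E(0)-C(m(\tau)+1)$, which rearranges to the claimed inequality, with $E(0)$ accounting for the $\Si_0$-dependence of the constant. If you wish to salvage your entropy, you would have to integrate your identity in $\tau$ and bound $\intsp\log\kappa\,d\vartheta$ at the endpoints, but that only yields a time-averaged bound on $\intsp\kappa_\vartheta^2\,d\vartheta$, not the pointwise bound needed in Lemma~\ref{lem:varekappamax}.
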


\begin{proof}
We follow similar lines to~\cite[Lem.~3.4 and Cor.~3.5]{Gage86} and~\cite[Lem.~6.9]{Pihan98}.
We observe that
\begin{align}\label{eq:intkappakappamax}
0<\intst\kappa^2\,ds_t
=\intsp\kappa\,d\vartheta
\leq2\pi\max_{\vartheta\in\Sp}\kappa(\vartheta,\tau)
\end{align}
and use the time-independency~\eqref{eq:taut} of $\vartheta$, Lemma~\ref{lem:dtaukappa}, integration by parts to estimate 
\begin{align}\label{eq:dtauintkappadthetaap_1}
\fracd{}{\tau}\intsp\left(\kappa^2-\left(\fracp{\kappa}{\vartheta}\right)^{2}-2h\kappa\right)d\vartheta
&=\intsp2\left(\kappa+\fracp{^2\kappa}{^2\vartheta}-h\right)\fracp{\kappa}{\tau}\,d\vartheta
		-2\fracd{h}{\tau}\intsp\kappa\,d\vartheta \notag\\
&=2\intsp\kappa^2\left(\kappa+\fracp{^2\kappa}{^2\vartheta}-h\right)^{2}d\vartheta
		-2\fracd{h}{\tau}\intsp\kappa\,d\vartheta \notag\\
&\geq-2\fracd{h}{\tau}\intsp\kappa\,d\vartheta
\end{align}
for all $\tau>0$.
By~\eqref{eq:h_3a},
\begin{align}\label{eq:dtauhintkappa}
\fracd{h}{\tau}\intsp\kappa\,d\vartheta
=-(1-\gamma)\frac{2\pi}{L^2}\fracd{L}{\tau}\intsp\kappa\,d\vartheta
		+\frac{\gamma}{4\pi}\fracd{}{\tau}\left(\intsp\kappa\,d\vartheta\right)^2\,.
\end{align}
By Lemma~\ref{lem:AL}, $\fracd{}{\tau}L$ has a sign so that
$$\int_\vare^\tau\left|\fracd{L}{\bar\tau}\right|\,d\bar\tau\leq|L_\tau-L_0|\leq C$$
for all $0<\vare<\tau<\infty$. 
We integrate~\eqref{eq:dtauhintkappa} from $\vare$ to $\tau$ and conclude with $\vare\to0$, the upper bound from Lemma~\ref{lem:inthkappa}, the definition~\eqref{eq:defmtau} of $m$ and~\eqref{eq:intkappakappamax},
$$\int_0^\tau\fracd{h}{\bar\tau}\intsp\kappa\,d\vartheta\,d\bar\tau
\leq C\max_{\bar\tau\in[0,\tau]}\intsp\kappa\,d\vartheta
	+C\left(\intsp\kappa(\vartheta,\tau)\,d\vartheta\right)^2
\leq C(m(\tau)+1)$$
for all $\tau\in(0,\infty)$.
Hence, integrating~\eqref{eq:dtauintkappadthetaap_1} and the bounds from Lemma~\ref{lem:inthkappa} yield the claim. 
\end{proof}

For $\tau>0$, define
\begin{align}\label{eq:defmstartau}
m^*(\tau):=1+\frac{\sqrt{m(\tau)+1}}{\max_{\vartheta\in\Sp}\kappa(\vartheta,\tau)}\,.
\end{align}

\begin{Lemma}\label{lem:varekappamax}
Let $\Si_0=X_0(\Sp)$ be a smooth, embedded, convex curve.
Let $X:\Sp\!\times[0,\infty)\to\R^2$ be a solution of~\eqref{eq:ccf} with initial curve $\Si_0$ and $h$ satisfying~\eqref{eq:h_3a}.
Let $\tau>0$, $\vartheta_1,\vartheta_2\in\Sp$ and $\delta\in(0,\pi/2]$.
If $|\vartheta_1-\vartheta_2|<\delta$, then there exists $C\geq\sqrt{2\pi}$ with
$$|\kappa(\vartheta_1,\tau)-\kappa(\vartheta_2,\tau)|<Cm^*(\tau)\sqrt\delta\max_{\vartheta\in\Sp}\kappa(\vartheta,\tau)\,.$$
\end{Lemma}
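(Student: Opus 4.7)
The strategy is a straightforward Cauchy--Schwarz fundamental-theorem-of-calculus argument, using Lemma~\ref{lem:intdthetakappa} to control $\int_{\Sp}(\partial_\vartheta\kappa)^2\,d\vartheta$ and the crude bound $\int_{\Sp}\kappa^2\,d\vartheta\leq 2\pi(\max\kappa)^2$.

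First, since $\kappa(\,\cdot\,,\tau)$ is smooth on the strictly convex $\St$, for $\vartheta_1,\vartheta_2\in\Sp$ with $|\vartheta_1-\vartheta_2|<\delta$ I would write $\kappa(\vartheta_2,\tau)-\kappa(\vartheta_1,\tau)=\int_{\vartheta_1}^{\vartheta_2}\fracp{\kappa}{\vartheta}\,d\vartheta$ and apply Cauchy--Schwarz to obtain
\[
|\kappa(\vartheta_1,\tau)-\kappa(\vartheta_2,\tau)|
\leq\sqrt{\delta}\left(\intsp\left(\fracp{\kappa}{\vartheta}\right)^{\!2}d\vartheta\right)^{\!1/2}.
\]
Next, Lemma~\ref{lem:intdthetakappa} together with the trivial bound $\int_{\Sp}\kappa^2\,d\vartheta\leq 2\pi(\max_{\vartheta}\kappa(\vartheta,\tau))^2$ yields
\[
\intsp\left(\fracp{\kappa}{\vartheta}\right)^{\!2}d\vartheta
\leq 2\pi\Bigl(\max_{\vartheta\in\Sp}\kappa(\vartheta,\tau)\Bigr)^{\!2}+C(m(\tau)+1),
\]
where I use the elementary inequality $\int\kappa^2d\vartheta\le\int\kappa\,d\vartheta\cdot\max\kappa$ combined with $\int\kappa\,d\vartheta = \int_{\St}\kappa^2\,ds_t\le 2\pi\max\kappa$ coming from $\kappa\,d\vartheta=\kappa^2\,ds_t$ (or just $\int\kappa^2\,d\vartheta\leq 2\pi(\max\kappa)^2$ directly).

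Taking the square root with the elementary $\sqrt{a+b}\leq\sqrt{a}+\sqrt{b}$ gives
\[
\left(\intsp\left(\fracp{\kappa}{\vartheta}\right)^{\!2}d\vartheta\right)^{\!1/2}
\leq\sqrt{2\pi}\,\max_{\vartheta\in\Sp}\kappa(\vartheta,\tau)+\sqrt{C}\,\sqrt{m(\tau)+1}.
\]
Dividing and multiplying by $\max_{\vartheta\in\Sp}\kappa(\vartheta,\tau)$ in the second term produces the factor $\sqrt{m(\tau)+1}/\max_\vartheta\kappa(\vartheta,\tau)$, which is exactly the non-trivial part of $m^*(\tau)$ as defined in~\eqref{eq:defmstartau}. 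Setting $C':=\max\{\sqrt{2\pi},\sqrt{C}\}\geq\sqrt{2\pi}$ and factoring out $\max_{\vartheta}\kappa(\vartheta,\tau)$, the bracket becomes $\leq C' m^*(\tau)$, so combining with the first display yields
\[
|\kappa(\vartheta_1,\tau)-\kappa(\vartheta_2,\tau)|\leq C'\,m^*(\tau)\sqrt{\delta}\,\max_{\vartheta\in\Sp}\kappa(\vartheta,\tau),
\]
as claimed.

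There is no genuine obstacle here---the entire argument is a routine interpolation/Morrey-type estimate. The only minor subtlety is ensuring the constant satisfies $C\geq\sqrt{2\pi}$, which is automatic from the choice $C'=\max\{\sqrt{2\pi},\sqrt{C}\}$, and verifying that the hypothesis $\delta\leq\pi/2$ is not actually needed beyond ensuring a definite short arc on $\Sp$ over which to integrate (so the integral over $[\vartheta_1,\vartheta_2]$ is bounded by the integral over all of $\Sp$).
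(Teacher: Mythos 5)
Your argument is correct and is essentially identical to the paper's proof: the same fundamental-theorem-of-calculus plus Cauchy--Schwarz step, the same appeal to Lemma~\ref{lem:intdthetakappa} with the bound $\intsp\kappa^2\,d\vartheta\leq2\pi(\max_\vartheta\kappa)^2$, and the same factoring out of $\max_\vartheta\kappa$ (which is positive by strict convexity, as the paper notes via Lemma~\ref{lem:AL}) to produce $m^*(\tau)$.
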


\begin{proof}
We follow similar lines to~\cite[Paragraph~4.3.6]{GageHamilton86} and~\cite[Lem.~7.1]{Pihan98}.
Lemma~\ref{lem:AL} provides
$$\max_{\vartheta\in\Sp}\kappa(\vartheta,\tau)\geq\frac{L}{2\pi}\geq c>0\,.$$
Let $\delta\in(0,\pi/2]$.
For $|\vartheta_1-\vartheta_2|<\delta$, Cauchy--Schwarz and Lemma~\ref{lem:intdthetakappa} imply
\begin{align*}
|\kappa(\vartheta_1,\tau)-\kappa(\vartheta_2,\tau)|
&\leq|\vartheta_1-\vartheta_2|^{1/2}\left(\int_{\vartheta_1}^{\vartheta_2}
		\left(\fracp{\kappa}{\vartheta}(\vartheta,\tau)\right)^{2}d\vartheta\right)^{1/2} \notag\\
&\leq\sqrt{\delta}\left(\intsp\kappa^2(\vartheta,\tau)\,d\vartheta+C(m(\tau)+1)\right)^{1/2} \notag\\
&\leq\sqrt{\delta}\left(\sqrt{2\pi}\max_{\vartheta\in\Sp}\kappa(\vartheta,\tau)+\sqrt{C(m(\tau)+1)}\right) \notag\\
&\leq\sqrt{\delta}C\max_{\vartheta\in\Sp}\kappa(\vartheta,\tau)\left(1+\frac{\sqrt{m(\tau)+1}}{\max_{\vartheta\in\Sp}\kappa(\vartheta,\tau)}\right)\,,
\end{align*}
where we used $\max_{\vartheta\in\Sp}\kappa(\vartheta,\tau)>0$ for $\tau>0$.
\end{proof}

\begin{Lemma}[Gage--Hamilton~{\cite[Cor.~5.2]{GageHamilton86}}]\label{lem:kappamaxrin}
Let $\Si_0=X_0(\Sp)$ be a smooth, embedded, convex curve.
Let $X:\Sp\!\times[0,\infty)\to\R^2$ be a smooth, embedded solution of~\eqref{eq:ccf} with initial curve $\Si_0$ and $h$ satisfying~\eqref{eq:h_3a}. 
Let $\vare\in(0,1)$ and $\tau>0$.
Then
$$\max_{\vartheta\in\Sp}\kappa(\vartheta,\tau)r_{\inner}(\tau)
\leq\left\{(1-\vare)\left[1-K\!\left(\left(\frac\vare{Cm^*(\tau)}\right)^2\right)\left(\frac{r_{\cir}(\tau)}{r_{\inner}(\tau)}-1\right)\right]\right\}^{-1}\,,$$
where $K:(0,\pi]\to[0,\infty)$ is a positive decreasing function with $K(\omega)\to\infty$ for $\omega\searrow0$ and $K(\pi)=0$.
\end{Lemma}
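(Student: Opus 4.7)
The plan is to combine the pointwise curvature-oscillation control from Lemma~\ref{lem:varekappamax} with a convex-geometric comparison between an arc of $\St$ on which $\kappa$ remains near its maximum and the inscribed and circumscribed circles of $\St$. Fix $\vartheta^*\in\Sp$ with $\kappa_{\max}(\tau):=\max_{\Sp}\kappa(\,\cdot\,,\tau)=\kappa(\vartheta^*,\tau)>0$ (positivity by Corollary~\ref{cor:strongmaxpkappa}) and set $\omega:=(\vare/(Cm^*(\tau)))^2$. Because the constant in Lemma~\ref{lem:varekappamax} satisfies $C\geq\sqrt{2\pi}$ and $\vare\in(0,1)$, one has $\omega\in(0,\pi]$, and Lemma~\ref{lem:varekappamax} yields $\kappa(\vartheta,\tau)>(1-\vare)\kappa_{\max}$ for $|\vartheta-\vartheta^*|<\omega$; equivalently, the radius of curvature obeys $1/\kappa<((1-\vare)\kappa_{\max})^{-1}$ on this arc.

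After a rigid motion, I shall assume $X(\vartheta^*,\tau)=0$, $\btau(\vartheta^*,\tau)=\e_1$ and $\nnu(\vartheta^*,\tau)=-\e_2$, so that the tangent line $\{x_2=0\}$ supports $\St$ at the origin and the interior region lies locally in $\{x_2\geq 0\}$. From $\pa_\vartheta X=\kappa^{-1}\btau$ together with $\btau(\vartheta,\tau)=(\cos(\vartheta-\vartheta^*),\sin(\vartheta-\vartheta^*))$ on the arc $|\vartheta-\vartheta^*|<\omega$, the positions of the endpoints $X(\vartheta^*\pm\omega,\tau)$ can be compared explicitly with the chord and sagitta of a circular arc of radius $((1-\vare)\kappa_{\max})^{-1}$ subtending angle $2\omega$. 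Let $B_{r_{\inner}}(c_{\inner})$ denote an inscribed disc and $B_{r_{\cir}}(c_{\cir})$ a circumscribed disc of $\St$; convexity and the choice of coordinates force $\langle c_{\inner},\e_2\rangle\geq r_{\inner}$, while containment of $X(\vartheta^*\pm\omega,\tau)$ in $\overline{B_{r_{\cir}}(c_{\cir})}$ gives an upper bound for the height reached by the arc in terms of $r_{\cir}$. Combining these three geometric constraints and simplifying trigonometrically leads to an inequality of the form
\[(1-\vare)\kappa_{\max}r_{\inner}\Bigl[1-K(\omega)\Bigl(\frac{r_{\cir}}{r_{\inner}}-1\Bigr)\Bigr]\leq 1,\]
in which $K(\omega)$ is determined by the chord/sagitta ratio of a circular arc of angular width $2\omega$; a natural explicit choice is $K(\omega)$ proportional to $\cot(\omega/2)$, which is positive, strictly decreasing on $(0,\pi]$, equals $0$ at $\omega=\pi$, and diverges as $\omega\searrow 0$. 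Solving for $\kappa_{\max}r_{\inner}$ gives the claimed bound.

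The main obstacle will be the geometric step, i.\,e.\ extracting, from the sole piece of information that $1/\kappa$ is uniformly close to $1/\kappa_{\max}$ on an arc of explicit angular width, a sharp inequality in which the deviation of $\St$ from a round circle enters through the single factor $r_{\cir}/r_{\inner}-1$ with a weight $K(\omega)$ of the prescribed asymptotics. Two limiting cases provide sanity checks: as $\omega\to\pi$ the near-maximum arc covers almost all of $\Sp$, $K(\omega)\to 0$, and the inequality degenerates to the bound $\kappa_{\max}r_{\inner}\leq(1-\vare)^{-1}$, which is sharp on a circle; as $\omega\searrow 0$ the local control of $\kappa$ is too weak to constrain the global geometry, $K(\omega)\to\infty$, and the estimate becomes vacuous whenever $\St$ is not close to round, consistently with the intended range of usefulness of the lemma.
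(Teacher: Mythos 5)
Your opening step is exactly the intended one (and is the only step the paper itself carries out before deferring to the literature): apply Lemma~\ref{lem:varekappamax} at a point $\vartheta^*$ where $\kappa(\cdot,\tau)$ attains its maximum, with $\delta=\omega=(\vare/(Cm^*(\tau)))^2$ --- admissible because $C\geq\sqrt{2\pi}$ and $m^*\geq1$ give $\omega\leq1/(2\pi)<\pi/2$ --- to conclude $\kappa>(1-\vare)\kappa_{\max}$ on the arc $|\vartheta-\vartheta^*|<\omega$; this is precisely why $K$ is evaluated at $(\vare/(Cm^*))^2$. Note, though, that the paper does not prove the remaining geometric inequality at all: it cites \cite[Cor.~5.2]{GageHamilton86} (whose engine is their Lemma~5.1) and \cite[Lem.~7.11]{Pihan98}.

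That remaining inequality is where your proposal has a genuine gap, which you yourself flag as ``the main obstacle''. What must be shown is, in substance, $r_{\inner}\leq((1-\vare)\kappa_{\max})^{-1}+K(\omega)\,(r_{\cir}-r_{\inner})$ with coefficient exactly $1$ in front of $((1-\vare)\kappa_{\max})^{-1}$; this sharpness is not cosmetic, because Corollaries~\ref{cor:kappamaxrinvare} and~\ref{cor:C2congergences} need $\kappa_{\max}r_{\inner}\to1$, and any lost factor (a naive combination of the supporting lines at $X(\vartheta^*\pm\omega)$ with the one at $\vartheta^*$ typically produces an extra $1+\cos\omega$) would destroy the convergence argument. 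The three constraints you list do not couple the inscribed disc to the high-curvature arc: the height of the arc is already controlled by the sagitta estimate coming from $\kappa\geq(1-\vare)\kappa_{\max}$, and containment of the two endpoints in $\overline{B_{r_{\cir}}(c_{\cir})}$ yields errors of order $r_{\cir}$, not $r_{\cir}-r_{\inner}$. The factor $r_{\cir}/r_{\inner}-1$ enters instead through an annulus with a \emph{common} centre (e.g.\ $B(c_{\inner},r_{\inner})\subset\St^{\mathrm{int}}\subset B(c_{\inner},2r_{\cir}-r_{\inner})$, using $|c_{\inner}-c_{\cir}|\leq r_{\cir}-r_{\inner}$), combined with the support function $h$ of the curve with respect to that centre: one has $r_{\inner}\leq h\leq 2r_{\cir}-r_{\inner}$ globally, while on the arc $h''=1/\kappa-h\leq((1-\vare)\kappa_{\max})^{-1}-r_{\inner}$, so a second-difference estimate $h(\omega)+h(-\omega)-2h(0)\leq-\omega^2\,(r_{\inner}-((1-\vare)\kappa_{\max})^{-1})$ forces the desired bound with an explicit $K$. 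Your guess $K(\omega)\propto\cot(\omega/2)$ is matched to the stated asymptotics but is not derived, and the ``combining and simplifying trigonometrically'' step is exactly the content of Gage--Hamilton's Lemma~5.1. As written, the proposal reproduces the reduction the paper also performs but leaves unproved the statement the lemma actually asserts; to close it you must either carry out the support-function argument above (or the equivalent convex-geometry argument of \cite{GageHamilton86}) in detail, or cite it as the paper does.
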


\begin{proof}
The proof follows with the help of Lemma~\ref{lem:varekappamax} and can be found in~\cite[Cor.~5.2]{GageHamilton86} and~\cite[Lem.~7.11]{Pihan98}.
For details, see also~\cite[Lem.~11.11]{Dittberner18}.
\end{proof}

\begin{Cor}\label{cor:kappamaxrinvare}
Let $\Si_0=X_0(\Sp)$ be a smooth, embedded, convex curve.
Let $X:\Sp\!\times[0,\infty)\to\R^2$ be a smooth, embedded solution of~\eqref{eq:ccf} with initial curve $\Si_0$ and $h$ satisfying~\eqref{eq:h_3a}.
For every $\vare\in(0,1)$, there exists a time $\tau_0>0$ such that, for all $\tau\geq\tau_0$,
$$\max_{\vartheta\in\Sp}\kappa(\vartheta,\tau)r_{\inner}(\tau)\leq\frac1{(1-\vare)^2}\,.$$
\end{Cor}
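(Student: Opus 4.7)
The target bound will emerge directly from Lemma~\ref{lem:kappamaxrin} once two ingredients are in place: a uniform upper bound on $m^\ast(\tau)$, and smallness of the asymmetry $r_{\cir}(\tau)/r_{\inner}(\tau)-1$. Proposition~\ref{prop:C0convergence} already supplies $r_{\inner}(\tau)\to R\in(0,\infty)$ and $r_{\cir}(\tau)-r_{\inner}(\tau)\to 0$, so the asymmetry vanishes; only the bound on $m^\ast$ requires work. Writing $M(\tau):=\max_{\vartheta\in\Sp}\kappa(\vartheta,\tau)$, Lemma~\ref{lem:AL} gives $M(\tau)\geq L_\tau/(2\pi)\geq c>0$, so it is equivalent to bound the nondecreasing historical maximum $m(\tau)$ from above.

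The plan is to prove $m$ is bounded by contradiction. Suppose $m(\tau)\to\infty$ as $\tau\to\infty$. Since the flow is smooth for $t>0$, the function $M$ is continuous in $\tau$, hence so is $m$. For every large integer $k$ set the \emph{record time}
$$\tau_k:=\inf\{\tau>0:m(\tau)\geq k\}.$$
Because $m(\tau)<k$ for $\tau<\tau_k$ while $m(\tau_k)\geq k$, continuity of $M$ at $\tau_k$ forces $M(\tau_k)=m(\tau_k)=k$, so $\tau_k\to\infty$ and
$$m^\ast(\tau_k)=1+\frac{\sqrt{k+1}}{k}\longrightarrow 1\qquad(k\to\infty).$$
Now apply Lemma~\ref{lem:kappamaxrin} at $\tau_k$ with some fixed auxiliary $\vare'\in(0,1)$. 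The argument $(\vare'/(Cm^\ast(\tau_k)))^2$ converges to the positive constant $(\vare'/C)^2$, which lies in $(0,\pi]$ since $C\geq\sqrt{2\pi}$, so $K(\,\cdot\,)$ stays bounded; combined with $r_{\cir}/r_{\inner}-1\to 0$, this yields $M(\tau_k)\,r_{\inner}(\tau_k)\leq 2/(1-\vare')$ for large $k$. Since $r_{\inner}(\tau_k)\geq R/2$ eventually, $M(\tau_k)=k$ cannot diverge, contradiction. Hence $m(\tau)\leq M_0$ uniformly in $\tau$.

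With $m\leq M_0$ and $M(\tau)\geq c$ we get the uniform bound $m^\ast(\tau)\leq M^\ast:=1+\sqrt{M_0+1}/c$. Fix now the $\vare\in(0,1)$ from the statement and set $K_0:=K\!\bigl((\vare/(CM^\ast))^2\bigr)<\infty$. By Proposition~\ref{prop:C0convergence} there is $\tau_0$ such that $r_{\cir}(\tau)/r_{\inner}(\tau)-1\leq \vare/K_0$ for all $\tau\geq\tau_0$. Since $K$ is decreasing and $m^\ast(\tau)\leq M^\ast$ gives $(\vare/(Cm^\ast(\tau)))^2\geq(\vare/(CM^\ast))^2$, we obtain
$$K\!\left(\left(\frac{\vare}{Cm^\ast(\tau)}\right)^{\!2}\right)\!\left(\frac{r_{\cir}(\tau)}{r_{\inner}(\tau)}-1\right)\leq K_0\cdot\frac{\vare}{K_0}=\vare,$$
and Lemma~\ref{lem:kappamaxrin} delivers $M(\tau)\,r_{\inner}(\tau)\leq\bigl[(1-\vare)(1-\vare)\bigr]^{-1}=(1-\vare)^{-2}$ for $\tau\geq\tau_0$. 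The one delicate point is the contradiction step: along an arbitrary diverging sequence $M(\tau_j)$ could momentarily be small while $m(\tau_j)$ is already large, making $m^\ast(\tau_j)$ blow up and rendering Lemma~\ref{lem:kappamaxrin} useless; the record-time construction is precisely the device that forces $m^\ast\to 1$ at the sampling times we need.
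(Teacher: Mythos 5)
Your proof is correct, and its overall scaffolding --- reduce everything to a uniform bound on $m^*$, then feed the asymmetry estimate from Proposition~\ref{prop:C0convergence} into Lemma~\ref{lem:kappamaxrin} --- is the same as the paper's. Where you genuinely diverge is in how the bound on $m^*$ is obtained. The paper splits $[\tau_0,\infty)$ into the set $I_1$ where the historical maximum $m(\tau)$ is attained at the current time and its complement $I_2$, and argues through three cases on the structure of $I_2$; in the case where $I_2$ is unbounded it bounds $m^*(\tau)$ for $\tau\in I_{2,k}$ by replacing $\max_\vartheta\kappa(\vartheta,\tau)$ with $\max_\vartheta\kappa(\vartheta,\tau_k)$, a substitution that is not justified as written, since on $I_{2,k}$ the current maximum may be much smaller than the record value. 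Your record-time contradiction sidesteps this: sampling at the first times $\tau_k$ at which $m$ reaches level $k$ forces $M(\tau_k)=m(\tau_k)=k$, hence $m^*(\tau_k)\to1$, so Lemma~\ref{lem:kappamaxrin} applies there with controlled constants and contradicts $M(\tau_k)\to\infty$. This proves the stronger statement that $m$ itself (i.e.\ the curvature) is uniformly bounded in time, after which $m^*\leq 1+\sqrt{M_0+1}/c$ is immediate from $M\geq c$; the final assembly with $K_0$ and the choice of $\tau_0$ is then identical to the paper's. The only blemish, inherited verbatim from the paper, is the lower bound written as $M(\tau)\geq L_\tau/2\pi$; what actually follows from $\int\kappa\,ds=2\pi$ together with the upper bound on $L$ from Lemma~\ref{lem:AL} is $M(\tau)\geq2\pi/L_\tau\geq c>0$, which is all either argument needs.
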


\begin{proof}
We extend the proof of~\cite[Prop.~5.3]{GageHamilton86} and~\cite[Cor.~7.12]{Pihan98}.
Proposition~\ref{prop:C0convergence} implies that, for every $\delta>0$, there exists a time $\tau_0(\delta)>0$ so that $r_{\cir}(\tau)-r_{\inner}(\tau)\leq\delta$ for all $\tau\geq\tau_0$, and thus
\begin{align}\label{eq:routrindelta}
\frac{r_{\cir}(\tau)}{r_{\inner}(\tau)}-1\leq\frac\delta{r_{\inner}(\tau)}\,.
\end{align}
Recall the definitions~\eqref{eq:defmtau} and~\eqref{eq:defmstartau} of $m$ and $m^*$.
We define
$$I_1:=\{\tau\geq\tau_0\;|\;m(\tau)=\max_{\vartheta\in\Sp}\kappa(\vartheta,\tau)\}$$
and
$$I_2:=\{\tau\geq\tau_0\;|\;m(\tau)>\max_{\vartheta\in\Sp}\kappa(\vartheta,\tau)\}\,.$$
Then, $m$ is monotonically increasing on $I_1$ and constant on every connected subinterval of $I_2$.
By Lemma~\ref{lem:AL}, $L$ is uniformly bounded from above.
Hence there exits a constant $c>0$ so that $\max_{\vartheta\in\Sp}\kappa(\vartheta,\tau)\geq c$ for all $\tau\in[\tau_0,\infty)$ and
$$m^*(\tau)\leq2+\frac1c$$
for $\tau\in I_1$.
We distinguish between three cases. 
\begin{enumerate}[(i)]
\item Assume that $\sup_{[\tau_0,\infty)}m<\infty$.
Then $\sup_{[\tau_0,\infty)}m^*<\infty$.
\item Assume that $\sup_{[\tau_0,\infty)}m=\infty$ and $\sup\{\tau\in I_2\}=:\tau_1<\infty$.
Then $[\tau_1,\infty)\subset I_1$ and 
$$\sup_{[\tau_0,\infty)}m^*=\sup_{I_1}m^*<2+\frac1c\,.$$
\item Assume that $\sup_{[\tau_0,\infty)}m=\infty$ and $\sup\{\tau\in I_2\}=\infty$.
Assume there exists $\tau_2\in[\tau_0,\infty)$ so that $(\tau_2,\infty)\subset I_2$, then $m(\tau)=m(\tau_2)<\infty$ for all $\tau\in(\tau_2,\infty)$. 
This contradicts $\sup_{[\tau_0,\infty)}m=\infty$. 
Hence, $I_2$ consists of infinitely many disjoint open intervals $I_{2,k}$, $k\in\N$ and $\sup_{I_1}m^*\leq2+1/c$.
Define the sequence
$$\big(\tau_k:=\sup\{\tau\in I_{2,k}\}\in I_1\big)_{k\in\N}\,.$$
Then $\tau_k\to\infty$ for $k\to\infty$ and for all $k\in\N$, and since $\tau_k\in I_1$,
$$m(\tau)=m(\tau_k)=\max_{\vartheta\in\Sp}\kappa(\vartheta,\tau_k)$$
as well as
$$m^*(\tau)\leq1+\frac{m(\tau)+1}{\max_{\vartheta\in\Sp}\kappa(\vartheta,\tau)}
=1+\frac{m(\tau_k)+1}{\max_{\vartheta\in\Sp}\kappa(\vartheta,\tau_k)}
\leq2+\frac1c$$
for all $\tau\in I_{2,k}$.
Hence, 
$$\sup_{\tau\in[\tau_0,\infty)}m(\tau)
=\sup_{\tau\in I_1\cup I_2}m(\tau)
\leq2+\frac1c\,.$$
\end{enumerate}
Thus, for any $\tau\geq\tau_0$, $m^*$ is independent of time.
Recall that $K$, as defined in Lemma~\ref{lem:kappamaxrin}, is a positive decreasing function that satisfies $K(\omega)\to\infty$ for $\omega\searrow0$ and $K(\pi)=0$.
By Proposition~\ref{prop:C0convergence}, $r_{\inner}(\tau)\geq c>0$ for all $\tau\geq0$.
Hence, for given $\vare\in(0,1)$, we can choose $\delta>0$ and $\tau_0(\delta)>0$ so that
\begin{align}\label{eq:rinKvare}
\frac\delta{r_{\inner}(\tau)}\leq\frac\vare{K\big((\vare/Cm^*)^2\big)}
\end{align}
for all $\tau\geq\tau_0$.
Combining~\eqref{eq:routrindelta} and~\eqref{eq:rinKvare} yields
$$\frac{r_{\cir}(\tau)}{r_{\inner}(\tau)}-1\leq\frac\vare{K\big((\vare/Cm^*)^2\big)}$$
so that
$$1-\vare\leq1-K\!\left(\left(\frac\vare{Cm^*}\right)^2\right)\left(\frac{r_{\cir}(\tau)}{r_{\inner}(\tau)}-1\right)$$
for all $\tau\geq\tau_0$.
This and Lemma~\ref{lem:kappamaxrin} imply 
$$\max_{\vartheta\in\Sp}\kappa(\vartheta,\tau)r_{\inner}(\tau)
\leq\frac1{(1-\vare)^2}$$
for any $\tau\geq\tau_0$.
\end{proof}

\begin{Cor}
\label{cor:C2congergences}
Let $\Si_0=X_0(\Sp)$ be a smooth, embedded, convex curve.
Let $X:\Sp\!\times[0,\infty)\to\R^2$ be a smooth, embedded solution of~\eqref{eq:ccf} with initial curve $\Si_0$ and $h$ satisfying~\eqref{eq:h_3a}.
Then 
$$\frac{\max_{\vartheta\in\Sp}\kappa(\vartheta,\tau)}{\min_{\vartheta\in\Sp}\kappa(\vartheta,\tau)}\to1\,,\qquad
\kappa(\vartheta,\tau)\to\frac1{R}\qquad\text{ and }\qquad
h(\tau)\to\frac1{R}$$
for every $\vartheta\in\Sp$ and for $\tau\to\infty$, where $R$ is given in Proposition~\ref{prop:C0convergence}.
\end{Cor}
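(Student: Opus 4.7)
The plan is to combine the $L^2$-convergence of $\kappa$ towards $h$ (Lemma~\ref{lem:dtinthkappa}) with the uniform H\"older continuity of $\kappa$ in $\vartheta$ (Lemma~\ref{lem:varekappamax}) to upgrade to uniform convergence $\kappa(\vartheta,\tau)\to 1/R$; all three stated conclusions are then immediate. The convergence $h(\tau)\to 1/R$ is settled at once: Lemma~\ref{lem:inthkappa} gives $L_\tau h(\tau)\to 2\pi$, and Proposition~\ref{prop:C0convergence} gives $L_\tau\to 2\pi R$.

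Next I would set up $L^2$-convergence of $\kappa$ to $1/R$ in the $\vartheta$-variable. Since $r_{\inner}(\tau)\to R>0$, Corollary~\ref{cor:kappamaxrinvare} yields a uniform upper bound $\max_\vartheta\kappa(\vartheta,\tau)\le M$ for $\tau\ge\tau_0$. From Lemma~\ref{lem:dtinthkappa} together with $\int_{\St}(h-1/R)^2\,ds_t=L_\tau(h-1/R)^2\to 0$, the triangle inequality yields $\int_{\St}(\kappa-1/R)^2\,ds_t\to 0$. The change of variables $ds=d\vartheta/\kappa$ with $\kappa\le M$ then gives
\[
\intsp(\kappa-1/R)^2\,d\vartheta\le M\int_{\St}(\kappa-1/R)^2\,ds_t\longrightarrow 0\,.
\]

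The crux is to promote this $L^2$-bound to uniform convergence. The case analysis in the proof of Corollary~\ref{cor:kappamaxrinvare} shows that $m^*(\tau)$ is uniformly bounded on $[\tau_0,\infty)$; combined with the uniform bound on $\max_\vartheta\kappa$, Lemma~\ref{lem:varekappamax} delivers a uniform H\"older-$\tfrac12$ modulus of continuity for the family $\{\kappa(\cdot,\tau)\}_{\tau\ge\tau_0}$ on $\Sp$. By Arzel\`a--Ascoli, this family is relatively compact in $C^0(\Sp)$, so any subsequence $\tau_k\to\infty$ admits a sub-subsequence along which $\kappa(\cdot,\tau_{k_j})\to\kappa_\infty$ uniformly, and the $L^2$-convergence forces $\kappa_\infty\equiv 1/R$. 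Since every subsequence has a sub-subsequence with this same limit, the full family converges uniformly: $\kappa(\vartheta,\tau)\to 1/R$ uniformly in $\vartheta\in\Sp$. This immediately implies $\max_\vartheta\kappa,\min_\vartheta\kappa\to 1/R$, so the ratio tends to $1$, and pointwise convergence holds at every $\vartheta\in\Sp$. The only technical obstacle is the uniformity (in $\tau$) of the equicontinuity constant, but this is precisely what the bound on $m^*$ from the proof of Corollary~\ref{cor:kappamaxrinvare} provides.
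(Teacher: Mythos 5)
Your proposal is correct, and it reaches the conclusion by a genuinely different route than the paper. The paper first quotes Corollary~\ref{cor:kappamaxrinvare} and then invokes the classical Gage--Hamilton convexity argument (\cite[Thm.~5.4]{GageHamilton86}, \cite[Prop.~7.13]{Pihan98}), which compares the curve with its inscribed circle to deduce the pointwise limit $\kappa(\vartheta,\tau)\,r_{\inner}(\tau)\to1$; the three claims then follow from $r_{\inner}\to R$ and Lemma~\ref{lem:inthkappa}. You instead bypass that external geometric argument entirely: you get $h\to1/R$ from Lemma~\ref{lem:inthkappa} and $L\to2\pi R$ exactly as the paper does, convert the $L^2(ds)$-decay of $h-\kappa$ from Lemma~\ref{lem:dtinthkappa} into $L^2(d\vartheta)$-decay of $\kappa-1/R$ via $d\vartheta=\kappa\,ds$ and the uniform bound $\max_\vartheta\kappa\le M$ supplied by Corollary~\ref{cor:kappamaxrinvare}, and then upgrade to uniform convergence through the H\"older-$\tfrac12$ equicontinuity of Lemma~\ref{lem:varekappamax} and Arzel\`a--Ascoli. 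The two ingredients you need uniformly in $\tau$ are both available: $\max_\vartheta\kappa\le M$ follows from the statement of Corollary~\ref{cor:kappamaxrinvare} together with $r_{\inner}\to R>0$, and the bound on $m^*$ follows either from the case analysis in that corollary's proof, as you say, or more directly from $m^*\le1+\sqrt{m+1}/\max_\vartheta\kappa$ combined with $\max_\vartheta\kappa\ge2\pi/L\ge c$ and the uniform bound on $m$. What your route buys is self-containedness (the paper outsources the key pointwise step to a proof it does not reproduce) and a formally stronger conclusion, namely uniform rather than pointwise convergence of $\kappa$ to $1/R$; what the paper's route buys is brevity and a direct geometric interpretation via the inscribed radius.
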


\begin{proof}
We follow the lines of~{\cite[Cor.~7.14]{Pihan98}}.
By Proposition~\ref{prop:C0convergence}, $\Si_\tau$ is strictly convex for $\tau\in(0,\infty)$.
Like in~\cite[Thm.~5.4]{GageHamilton86},~\cite[Prop.~7.13]{Pihan98} or~\cite[Prop.~11.13]{Dittberner18}, we first conclude with the help of Corollary~\ref{cor:kappamaxrinvare} that $\kappa(\vartheta,\tau)r_{\inner}(\tau)\to1$ for all $\vartheta\in\Sp$ and for $\tau\to\infty$.
Hence, it also holds that $\max_{\vartheta\in\Sp}\kappa(\vartheta,\tau)r_{\inner}(\tau)\to1$ and $\min_{\vartheta\in\Sp}\kappa(\vartheta,\tau)r_{\inner}(\tau)\to1$ for $\tau\to\infty$ and the first claim follows.
By Proposition~\ref{prop:C0convergence}, the curve converges to a circle of radius $R$. 
This yields the second claim. 
The third claim follows from Lemma~\ref{lem:inthkappa} and $L\to2\pi R$.
\end{proof}

\begin{Thm}[Pihan~{\cite[Prop.~7.17]{Pihan98}}]
\label{thm:kappanconvergence} 
Let $\Si_0=X_0(\Sp)$ be a smooth, embedded, convex curve.
Let $X:\Sp\!\times[0,\infty)\to\R^2$ be a smooth, embedded solution of~\eqref{eq:ccf} with initial curve $\Si_0$ and $h$ satisfying~\eqref{eq:h_3a}. 
Then, for all $n\in\N$, $\fracp{^n}{\vartheta^n}\kappa\to0$ uniformly for $\tau\to\infty$.
Hence, the curves converge uniformly in $C^\infty$ to a circle of radius $R$.
\end{Thm}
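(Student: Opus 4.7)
My plan is to combine parabolic bootstrap with standard interpolation. Corollary \ref{cor:C2congergences} already gives uniform $C^0$ convergence $\kappa(\,\cdot\,,\tau)\to 1/R$ (and $h(\tau)\to 1/R$), together with uniform two-sided bounds $0<c\leq\kappa\leq C$ for $\tau\geq\tau_0$. Thus it suffices to establish that for every $n\in\N$ the family $\{\partial^n_\vartheta\kappa(\,\cdot\,,\tau)\}_{\tau\geq\tau_0}$ is uniformly bounded in $L^\infty(\Sp)$; then interpolation on the compact manifold $\Sp$,
$$\Vert f\Vert_{C^n(\Sp)}\leq C\Vert f\Vert_{C^0(\Sp)}^{1/(n+1)}\Vert f\Vert_{C^{n+1}(\Sp)}^{n/(n+1)},$$
applied to $f=\kappa-1/R$ turns $C^0$-convergence plus $C^{n+1}$-boundedness into $C^n$-convergence, which is exactly the claim. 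In particular this yields $\partial^n_\vartheta\kappa\to 0$ uniformly for every $n\geq 1$ and hence $C^\infty$ convergence to the circle of radius $R$.

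For the uniform derivative bounds I would argue by induction on $n$, using the evolution equation of Lemma \ref{lem:dtaukappa},
$$\fracp{\kappa}{\tau}=\kappa^2\fracp{^2\kappa}{\vartheta^2}-(h-\kappa)\kappa^2,$$
differentiated $n$ times in $\vartheta$. Since $\vartheta$ and $\tau$ commute by \eqref{eq:taut}, one obtains a PDE of the schematic form
$$\fracp{}{\tau}\partial^n_\vartheta\kappa=\kappa^2\,\partial^{n+2}_\vartheta\kappa+\sum_{|\alpha|\leq n+1}P_\alpha(\kappa,\partial_\vartheta\kappa,\dots)\,\partial^{\alpha_1}_\vartheta\kappa\cdots\partial^{\alpha_k}_\vartheta\kappa-h\,\partial^n_\vartheta(\kappa^2),$$
with polynomial coefficients $P_\alpha$ bounded in terms of lower-order derivatives. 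Multiplying by $\partial^n_\vartheta\kappa$ and integrating over $\Sp$ against $d\vartheta$, integration by parts on the principal term produces
$$\fracd{}{\tau}\intsp(\partial^n_\vartheta\kappa)^2\,d\vartheta\leq -c\intsp(\partial^{n+1}_\vartheta\kappa)^2\,d\vartheta+C\intsp(\partial^n_\vartheta\kappa)^2\,d\vartheta+(\text{absorbable lower-order terms})$$
for $\tau\geq\tau_0$, where $c>0$ uses $\kappa^2\geq c>0$. The base case $n=1$ is Lemma \ref{lem:intdthetakappa} together with the bound $\max_{\Sp}\kappa\leq 1/(1-\vare)^2 r_{\inner}^{-1}$ from Corollary \ref{cor:kappamaxrinvare} and Proposition \ref{prop:C0convergence} (which bounds $r_{\inner}$ away from zero). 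Given the inductive bound on $\intsp(\partial^j_\vartheta\kappa)^2\,d\vartheta$ for $j<n$, the lower-order terms in the energy inequality are controlled by the Gagliardo--Nirenberg inequality (Theorem \ref{thm:GN}) together with Young's inequality, so that the bad term can be absorbed into $-c\int(\partial^{n+1}_\vartheta\kappa)^2\,d\vartheta$, yielding a Gronwall-type inequality and hence uniform-in-$\tau$ boundedness of $\intsp(\partial^n_\vartheta\kappa)^2\,d\vartheta$. Sobolev embedding $W^{1,2}(\Sp)\hookrightarrow C^0(\Sp)$ upgrades the $H^n$-bound to a $C^{n-1}$-bound.

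The main obstacle is bookkeeping of the lower-order nonlinear terms in the $n$-th derivative of the PDE: differentiating $\kappa^2\partial^2_\vartheta\kappa$ repeatedly creates multilinear expressions of the form $\partial^{i_1}_\vartheta\kappa\cdots\partial^{i_r}_\vartheta\kappa$ with $i_1+\cdots+i_r=n+2$, each of which must be estimated by Gagliardo--Nirenberg against the highest-order term $\int(\partial^{n+1}_\vartheta\kappa)^2\,d\vartheta$ with a small enough constant. This is routine Gage--Hamilton/Pihan-style interpolation (compare \cite[Section~5]{GageHamilton86} and \cite[Chapter~7]{Pihan98}), made possible here because $h$ is uniformly bounded (Lemma \ref{lem:inthkappa}) and $|\fracd{h}{\tau}|$ is bounded, so the contribution of the global term never destroys the coercivity. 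Once all derivatives are uniformly bounded, the interpolation argument in the first paragraph finishes the proof and confirms the $C^\infty$ convergence to the circle of radius $R$.
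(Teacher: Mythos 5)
Your proposal is correct and follows essentially the same route as the proofs the paper delegates to the references (\cite[Prop.~7.17]{Pihan98}, \cite[Section~5]{GageHamilton86}): uniform $L^2$-bounds on $\partial^n_\vartheta\kappa$ obtained by differentiating the equation of Lemma~\ref{lem:dtaukappa}, absorption of the multilinear lower-order terms into the coercive term $-c\intsp(\partial^{n+1}_\vartheta\kappa)^2\,d\vartheta$ via Theorem~\ref{thm:GN} and Young, Sobolev embedding, and interpolation against the $C^0$-convergence of Corollary~\ref{cor:C2congergences}. One step you should make explicit: the inequality $\fracd{}{\tau}E_n\leq-cE_{n+1}+CE_n+C$ as written does not yield boundedness by Gronwall alone (it allows exponential growth), so you must also absorb $CE_n$ — either by interpolating $E_n\leq\vare E_{n+1}+C(\vare)E_1$ or by Wirtinger's inequality $E_{n+1}\geq E_n$, valid since $\partial^n_\vartheta\kappa$ has zero mean on $\Sp$ for $n\geq1$ — so as to arrive at $\fracd{}{\tau}E_n\leq-c'E_n+C$, from which uniform boundedness follows.
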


\begin{proof}
The proof uses Corollary~\ref{cor:C2congergences} and can be found in~\cite[Prop.~7.17]{Pihan98} or~\cite[Thm.~11.17]{Dittberner18}.
\end{proof}

We summarise our results in the following and two theorems.

\begin{Thm}
\label{thm:main1}
Let $\Si_0=X_0(\Sp)$ be a smooth, embedded curve.
Let $X:\Sp\!\times[0,\infty)\to\R^2$ be a smooth, embedded solution of~\eqref{eq:ccf} with initial curve $\Si_0$ and $h$ satisfying~\eqref{eq:h_3a}. 
Then the evolving surfaces $\St=X(\Sp\!,t)$ are contained in a uniformly bounded region of the plane for all times.
And, for all $\beta\in(0,1)$, there exists a time-independent constant $C>0$ such that, for all $t\geq0$,
\begin{enumerate}[(i)]
\item $|\max_{p\in\Sp}\kappa(p,t)-\min_{p\in\Sp}\kappa(p,t)|\leq C\exp\!\left(-\frac{\beta}{R^2}t\right)$,
\item $|\kappa(p,t)-1/R|\leq C\exp\!\left(-\frac{\beta}{R^2}t\right)$ for all $p\in\Sp$,
\item $|h(t)-1/R|\leq C\exp\!\left(-\frac{\beta}{R^2}t\right)$, and
\item $\left|\fracp{^n}{t^m}\fracp{^n}{p^n}\kappa(p,t)\right|\leq C\exp\!\left(-\frac{\beta}{(n+2m+1)R^2}t\right)$ for all $p\in\Sp$ and all $n,m\in\N$.
\end{enumerate}
Hence, the solution converges smoothly and exponentially to a circle of radius $R$.
\end{Thm}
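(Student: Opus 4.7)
The plan is to leverage the exponential $L^2$-type decay from Proposition~\ref{prop:C0convergence} together with the uniform smooth bounds underlying Theorem~\ref{thm:kappanconvergence}, converting both into the required pointwise exponential estimates via interpolation. The uniform boundedness of the enclosed region is immediate: Lemma~\ref{lem:AL} provides $L_t\leq C$, so each $\St$ sits inside a disk of radius $L_t/2$ around any of its points, and the $C^0$-convergence to a circle of radius $R$ in Proposition~\ref{prop:C0convergence} anchors these disks inside a common region of the plane.

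For the scalar quantities $L_t$, $A_t$ and $h(t)$, I would first combine Lemma~\ref{lem:dtL} with the definition~\eqref{eq:h_3a} of $h$ to obtain the algebraic identities
$$\fracd{A}{t}=\gamma\left(\frac{L}{2\pi}\intst\kappa^2\,ds_t-2\pi\right),\qquad
\fracd{L^2}{t}=-4\pi(1-\gamma)\left(\frac{L}{2\pi}\intst\kappa^2\,ds_t-2\pi\right).$$
By Proposition~\ref{prop:C0convergence} the right-hand sides decay like $C\exp(-\beta t/R^2)$ for any $\beta<1$; integrating from $t$ to $\infty$ and using $L_t\to2\pi R$, $A_t\to\pi R^2$ yields $|L_t-2\pi R|+|A_t-\pi R^2|\leq C\exp(-\beta t/R^2)$. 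Claim~(iii) follows from $Lh-2\pi=\gamma\bigl(\tfrac{L}{2\pi}\intst\kappa^2\,ds_t-2\pi\bigr)$ combined with the length rate $|2\pi/L-1/R|\leq C\exp(-\beta t/R^2)$.

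For the pointwise claims~(i) and~(ii), I would use the identity
$$\intst\!\left(\kappa-\frac{2\pi}{L}\right)^{2}ds_t=\frac{2\pi}{L}\!\left(\frac{L}{2\pi}\intst\kappa^2\,ds_t-2\pi\right)\leq C\exp(-\beta t/R^2)$$
to obtain exponential $L^2$-decay of $\kappa-2\pi/L$. Theorem~\ref{thm:kappanconvergence} supplies uniform $C^k(\Sp)$-bounds on $\kappa$ in the angular coordinate $\vartheta$ for every $k$, so the Gagliardo--Nirenberg interpolation $\Vert f\Vert_\infty\leq C\Vert f\Vert_{L^2}^{1-\sigma}\Vert\pa_\vartheta^k f\Vert_{L^2}^{\sigma}+C\Vert f\Vert_{L^2}$ with $\sigma$ made arbitrarily small by enlarging $k$ transfers the $L^2$-rate to a pointwise rate losing only an arbitrarily small fraction. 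Since the threshold $\beta<1$ in Proposition~\ref{prop:C0convergence} is arbitrary, starting from a slightly larger $\beta'$ we obtain $\Vert\kappa-2\pi/L\Vert_\infty\leq C\exp(-\beta t/R^2)$; combined with the length estimate this gives~(ii), and~(i) is then simply $\max_p\kappa-\min_p\kappa\leq2\Vert\kappa-2\pi/L\Vert_\infty$.

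Claim~(iv) is the main technical point and the real bookkeeping obstacle. Using the angle-parametrised evolution $\pa_\tau\kappa=\kappa^2\pa_\vartheta^2\kappa-(h-\kappa)\kappa^2$ from Lemma~\ref{lem:dtaukappa}, every time derivative can be traded for two spatial derivatives modulo uniformly bounded coefficients. Differentiating $m$ times in $\tau$ and $n$ times in $\vartheta$ and expressing the result as a polynomial in $\kappa$, $h$ and $\pa_\vartheta^j\kappa$ for $j\leq n+2m+2$, one proves by induction on $n+2m$ that the corresponding derivative decays at the stated rate $\beta/((n+2m+1)R^2)$: at each induction step one interpolates between the $L^2$-decay already established and the uniform $C^{n+2m+2}$-bound provided by Theorem~\ref{thm:kappanconvergence}, the interpolation exponent being exactly $1/(n+2m+1)$. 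The principal difficulty is managing this induction so that the exponent degrades by precisely the factor $n+2m+1$ and the constants remain time-independent; the smoothing provided by Lemma~\ref{lem:dtaukappa} together with the derivative estimates of Lemmata~\ref{lem:intdthetakappa} and~\ref{lem:varekappamax} supplies the compactness that closes the induction.
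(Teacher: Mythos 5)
Your overall architecture (reduce to the convex regime, extract exponential decay from Proposition~\ref{prop:C0convergence}, upgrade to pointwise and higher-order estimates by interpolation, then integrate the speed) is the same as the paper's, and your treatment of claim~(iii) via $Lh-2\pi=\gamma\bigl(\tfrac{L}{2\pi}\intst\kappa^2\,ds_t-2\pi\bigr)$ is fine. But there are two genuine gaps.

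First, a quantitative gap in (i), (ii) and (iv). Your only decaying input is the ``energy'' $\tfrac{L}{2\pi}\intst\kappa^2\,ds_t-2\pi\leq C\exp(-\beta t/R^2)$, which via your (correct) identity controls $\Vert\kappa-2\pi/L\Vert_{L^2}^2$; hence $\Vert\kappa-2\pi/L\Vert_{L^2}\leq C\exp(-\beta t/2R^2)$, with the exponent already halved. Theorem~\ref{thm:kappanconvergence} supplies only \emph{qualitative} bounds $\fracp{^n\kappa}{\vartheta^n}\to0$ with no rate, so every interpolation against these bounded norms can only degrade the rate further: with exponent $1-\sigma$ you get at best $\Vert\kappa-2\pi/L\Vert_\infty\leq C\exp(-\beta(1-\sigma)t/2R^2)$, i.e.\ the estimates of (i)--(ii) only for $\beta\in(0,1/2)$, never for all $\beta\in(0,1)$ as claimed; the same factor of $2$ propagates through your induction for (iv). The paper closes exactly this gap by first proving, via Wirtinger's inequality in the $\vartheta$-parametrisation (following Gage--Hamilton and Pihan), full-rate exponential decay of $\Vert\fracp{\kappa}{\vartheta}\Vert_{L^2}$ itself (not of its square), and then obtains (i)--(ii) through the rate-preserving $L^1$ bound $\int_{\St}|\fracp{\kappa}{s}|\,ds_t=\intsp|\fracp{\kappa}{\vartheta}|\,d\vartheta\leq\sqrt{2\pi}\,\Vert\fracp{\kappa}{\vartheta}\Vert_{L^2}$ together with the intermediate value theorem applied to the chain $\min\kappa\leq\sqrt{\pi/A}\leq2\pi/L\leq\max\kappa$. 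Without some full-rate decay of a derivative norm your scheme cannot reach the stated exponent.

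Second, your argument for the uniformly bounded region is circular. Proposition~\ref{prop:C0convergence} is proved from $r_{\cir}-r_{\inner}\to0$ and $L\to2\pi R$, which controls the \emph{shape} of $\St$ but not its position; nothing there prevents the nearly-round curves from translating off to infinity. The paper pins down the position only after (ii)--(iii) are established, via
\begin{align*}
\Vert X(p,t)-X(p,0)\Vert\leq\int_0^t|\kappa(p,\tau)-h(\tau)|\,d\tau\leq C\int_0^t\exp\!\left(-\frac{\beta\tau}{R^2}\right)d\tau\leq C\,,
\end{align*}
so boundedness must come last, not first. Two smaller points: the results you invoke require convexity, so you should begin by applying Theorem~\ref{thm:convexity} to obtain $T_0$ with $\St$ strictly convex for $t>T_0$ (and handle $[0,T_0]$ by smoothness); and in (iv) you still need the lower bound on the length element $v$ (obtained in the paper from $\fracp{v}{t}=\kappa(h-\kappa)v$ and (ii)--(iii)) to convert $\vartheta$-derivatives into the $p$-derivatives appearing in the statement.
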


\begin{proof}
By Theorem~\ref{thm:convexity}, there exists a time $T_0>0$ so that the curves are strictly convex on $(T_0,\infty)$.
Like in~\cite{GageHamilton86},~\cite[Section~7.5]{Pihan98} and~\cite[Section~11.4]{Dittberner18}, we can show for convex curves with the help of Wirtinger's inequality and the smooth convergence of Theorem~\ref{thm:kappanconvergence} exponential decay of the $L^2$-norm of the derivative of the curvature.
The proof is independent of the particular form of $h$, which is why we do not repeat it here.
Interpolation inequalities then yield that for $\beta\in(0,1)$ and $m,n\in\N\cup\{0\}$, $m+n>0$, there exist constants $C_{n,m}>0$ such that
\begin{align}\label{eq:dtaundthetamkappa}
\max_{\vartheta\in\Sp}\left|\fracp{^m}{\tau^m}\fracp{^n\kappa}{\vartheta^n}(\vartheta,\tau)\right|
\leq C_{n,m}\exp\!\left(-\frac{\beta\tau}{(n+2m+1)R^2}\right)
\end{align}
for $\tau$ large enough.
To prove~(i), we follow the lines of~\cite[Prop.~7.27]{Pihan98}.
For $t\geq0$, let $p_1,p_2\in\Sp$ be the points where the curvature attains its maximum and minimum.
By Lemma~\ref{lem:AL} and~\eqref{eq:dtaundthetamkappa}, there exists a time-independent constant $C>0$ so that
$$\left|\max_{p\in\Sp}\kappa(p,t)-\min_{p\in\Sp}\kappa(p,t)\right|
=|\kappa(p_2,t)-\kappa(p_1,t)|
\leq\intst\left|\fracp{\kappa}{s}\right|ds_t
\leq C\exp\!\left(-\frac{\beta t}{R^2}\right)\,.$$
for $t$ large enough. 
To show claim (ii), we observe that for embedded, closed, convex curves,
\begin{align}\label{eq:kappaminALkappamax}
\min_{p\in\Sp}\kappa(p)
\leq\frac1{r_{\cir}}
=\sqrt{\frac\pi{A_{\cir}}}
\leq\sqrt{\frac\pi{A}}
\leq\frac{2\pi}{L}
\leq\frac1{2\pi}\intst\kappa^2\,ds_t
\leq\max_{p\in\Sp}\kappa(p)\,.
\end{align}
By the intermediate value theorem and~\eqref{eq:kappaminALkappamax} there exist points $p_0,p_1,p_2\in\Sp$ with $\kappa(p_0,t)=\sqrt{\pi/A}$, $\kappa(p_1,t)=2\pi/L$ and $\kappa(p_2,t)=\intst\kappa^2\,ds_t/2\pi$, so that for $p\in\Sp$ with~\eqref{eq:dtaundthetamkappa},
\begin{align}\label{eq:kappapiA}
\left|\kappa(p,t)-\sqrt{\frac\pi{A}}\right|
=|\kappa(p,t)-\kappa(p_0,t)|
\leq\intst\left|\fracp{\kappa}{s}\right|ds_t
\leq C\exp\!\left(-\frac{\beta t}{R^2}\right)
\end{align}
and likewise
\begin{align}\label{eq:kappa2piL}
\left|\kappa(p,t)-\frac{2\pi}{L}\right|
+\left|\kappa(p,t)-\frac1{2\pi}\intst\kappa^2\,ds_t\right|
\leq C\exp\!\left(-\frac{\beta t}{R^2}\right)
\end{align}
for $t$ large enough.
Furthermore, Proposition~\ref{prop:C0convergence} and yields
\begin{align}\label{eq:AR}
\left|\sqrt{\frac\pi{A}}-\frac1R\right|
\leq\frac{\sqrt{|A-\pi R^2|}}{R\sqrt{A}}
\leq C\sqrt{\int_t^\infty\left|\fracd{A}{\tau}\right|\,d\tau}
\leq C\int_t^\infty\left|\fracd{A}{\tau}\right|\,d\tau
\end{align}
and
\begin{align}\label{eq:LR}
\left|\frac{2\pi}{L}-\frac1R\right|
=\frac{|2\pi R-L|}{LR}
\leq C\int_t^\infty\left|\fracd{L}{\tau}\right|\,d\tau\,.
\end{align} 
By~\eqref{eq:L2piintkappa},~\eqref{eq:dtAa} and Proposition~\ref{prop:C0convergence}, there exists a constant $C>0$ so that
\begin{align}\label{eq:piAR_a}
\sqrt{\int_t^\infty\left|\fracd{A}{\tau}\right|\,d\tau}
&=\sqrt{\gamma\left(\frac{L^2}{4\pi}-A\right)}
\leq C\exp\!\left(-\frac{\beta t}{R^2}\right)
\end{align}
for $t$ large enough.
By~\eqref{eq:kappapiA},~\eqref{eq:kappa2piL},~\eqref{eq:AR},~\eqref{eq:LR} and~\eqref{eq:piAR_a}, for $p\in\Sp$,
\begin{align}\label{eq:kappa1Rleq}
\left|\kappa(p,t)-\frac1{R}\right|
\leq\left|\kappa(p,t)-\sqrt{\frac\pi{A}}\right|+\left|\sqrt{\frac\pi{A}}-\frac1{R}\right|
\leq C\exp\!\left(-\frac{\beta t}{R^2}\right)\,.
\end{align}
Likewise with Proposition~\ref{prop:C0convergence},~\eqref{eq:kappapiA} and~\eqref{eq:kappa2piL},
\begin{align}\label{eq:kappahleq}
|\kappa(p,t)-h(t)|
&\leq\left|\kappa(p,t)-\frac{2\pi}L\right|+\frac{|\gamma|}{L}\left|\frac L{2\pi}\intst\kappa^2\,ds_t-2\pi\right| \notag\\
&\leq C\exp\!\left(-\frac{\beta t}{R^2}\right)
\end{align}
for $t$ large enough.
The boundedness of the curvature on $[0,T_0]$ yields the claim for all $t\geq0$.
For claim (iii), we estimate with~\eqref{eq:kappa1Rleq} and~\eqref{eq:kappahleq},
$$\left|h-\frac1{R}\right|
\leq C\exp\!\left(-\frac{\beta t}{R^2}\right)$$
for all $t\geq0$.
For claim~(iv), we use Lemma~\ref{lem:evolutionequations} and~\eqref{eq:kappahleq} to estimate
$$\fracp{v}{t}
=\kappa(h-\kappa)v
\leq C\left(\exp\!\left(-\frac{\beta t}{R^2}\right)\right)v$$
for all $t\geq0$.
Hence, $v\geq C$ on $\Sp\!\times[0,\infty)$ and the claim follows with~\eqref{eq:dtaundthetamkappa}, for every $m,n\in\N\cup\{0\}$, $m+n>0$.
To show that the curves stay in a bounded region, we observe that with~\eqref{eq:kappahleq},
$$\Vert X(p,t)-X(p,0)\Vert_{\R^2}
\leq\int_0^t|\kappa(p,\tau)-h(\tau)|\,d\tau
\leq C\int_0^t\exp\!\left(-\frac{\beta\tau}{R^2_0}\right)\,d\tau\leq C$$
for all $p\in\Sp$ and $t\in(0,\infty)$, where $C$ is independent of time.
\end{proof}

\begin{Rem}\label{rem:dtAg}
All the proofs leading up to Theorem~\ref{thm:main1} also work, if we prescribe the derivative of the area or the length by a function $g\in C^\infty([0,\infty))\cap L^1([0,\infty))$. 
If we prescribe the derivative of the area, Lemma~\ref{lem:dtL} and~\eqref{eq:hdtAdtL} yield
\begin{align}\label{eq:dtAdtLb}
\fracd{A}{t}=g\,,\quad
h=\frac{2\pi+g}L
\quad\text{ and }\quad
\fracd{L}{t}
=-\frac{2\pi}L\left(\frac L{2\pi}\intst\kappa^2\,ds_t-2\pi-g\right)\,,
\end{align}
where either
\begin{align*}
-2\pi<g\leq0\,,&\quad\fracd{g}{t}\geq0\quad\text{ and }\quad\int_0^\infty g\,dt>-A_0\,,\qquad\text{ or } \\
0\leq g<\frac{L_t}{2\pi}\intst\kappa^2\,ds_t-2\pi\,,&\quad\fracd{g}{t}\leq0\quad\text{ and }\quad\int_0^\infty g\,dt\leq\frac{L_0^2}{4\pi}-A_0\,,
\end{align*}
since need $A$ and $L$ to be monotone and bounded and we will need $h$ to be positive in Remark~\ref{rem:main2}.
If we prescribe the derivative of the length, Lemma~\ref{lem:dtL} and~\eqref{eq:hdtAdtL} yield
\begin{align*}
\fracd{L}{t}=g\,,\quad
h=\frac1{2\pi}\left(\intst\kappa^2\,ds_t+g\right)
\quad\text{ and }\quad
\fracd{A}{t}
=\frac L{2\pi}\intst\kappa^2\,ds_t-2\pi+\frac{Lg}{2\pi}\,,
\end{align*}
where either
\begin{align*}
-\intst\kappa^2\,ds_t+\frac{4\pi}{L_t}<g\leq0\,,&\quad\fracd{g}{t}\geq0\quad\text{ and }\;\int_0^\infty g\,dt>-L_0\,,\quad\text{ or } \\
0\leq g\,,&\quad\fracd{g}{t}\leq0\quad\text{ and }\quad\int_0^\infty g\,dt<\infty\,,
\end{align*}
since again need $A$ and $L$ to be monotone and bounded.
Then Theorem~\ref{thm:main1} holds with the addition in the cases
\begin{enumerate}
\item[(ii)] $|\kappa(p,t)-1/R|\leq C\exp\!\left(-\frac{\beta}{R^2}t\right)+C\int_t^\infty g\,d\tau$ for all $p\in\Sp$, and
\item[(iii)] $|h(t)-1/R|\leq C\exp\!\left(-\frac{\beta}{R^2}t\right)+C\int_t^\infty g\,d\tau+Cg(t)$
\end{enumerate}
for all $\beta\in(0,1)$ and $t\geq0$, where $C>0$ is time-independent.
\end{Rem}

\begin{Thm}\label{thm:main2}
Let $\Si_0=X_0(\Sp)$ be a smooth, embedded curve satisfying~\eqref{eq:intkappageqminuspi}.
Then there exists a unique, smooth, embedded solution $X:\Sp\!\times[0,\infty)\to\R^2$ to~\eqref{eq:ccf} with initial curve $\Si_0$ and $h$ satisfying~\eqref{eq:h_ap}.
The evolving curves $\St=X(\Sp\!,t)$ are contained in a uniformly bounded region and converge smoothly and exponentially to a circle of radius $R$.
\end{Thm}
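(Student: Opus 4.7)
The plan is to assemble Theorem~\ref{thm:main2} from the earlier results by verifying that the APCSF global term $h(t)=2\pi/L_t$ fits each hypothesis. First I observe that~\eqref{eq:h_ap} is the special case of~\eqref{eq:h_3a} with $\delta=1$ and hence $\gamma=(\delta-1)A_0/(L_0^2/4\pi-A_0)=0$, so that $h=(1-\gamma)2\pi/L_t+\gamma\intst\kappa^2\,ds_t/(2\pi)=2\pi/L_t$.

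I would start by invoking Theorem~\ref{thm:ste} to obtain a unique smooth solution on a maximal interval $[0,T)$; the required positivity $h(0)=2\pi/L_0>0$ is automatic. Since $h\geq0$ trivially satisfies~\eqref{eq:h_1}, Corollary~\ref{cor:embeddedness} guarantees that $\St$ remains embedded on $[0,T)$. To push $T$ to infinity, I verify~\eqref{eq:h_2}: Lemma~\ref{lem:dtL} gives $\fracd{A}{t}=hL_t-2\pi=0$, so $A_t\equiv A_0$; the isoperimetric inequality then yields $L_t\geq 2\sqrt{\pi A_0}>0$, and consequently $0\leq h(t)\leq\sqrt{\pi/A_0}$. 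Corollary~\ref{cor:T=infty} therefore produces $T=\infty$.

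Having the immortal embedded flow, Theorem~\ref{thm:convexity} (applicable because $h$ is of the form~\eqref{eq:h_3a}) supplies a time $T_0\geq0$ after which $\St$ is strictly convex. Then Theorem~\ref{thm:main1} gives smooth, exponential convergence to a circle of radius
\[
R=\lim_{t\to\infty}\sqrt{\tfrac{A_t}{\pi}}=\sqrt{\tfrac{A_0}{\pi}}\,,
\]
where the second equality uses area preservation. Uniform boundedness of the evolving curves is also part of the statement of Theorem~\ref{thm:main1}. Uniqueness of the global solution follows from short-time uniqueness by standard continuation, since the above argument shows the curvature cannot blow up in finite time.

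The main (and only real) new verification is the length lower bound used to check~\eqref{eq:h_2}, and this is immediate from $A_t\equiv A_0$ plus the classical isoperimetric inequality. Everything else is pure bookkeeping: the theorem is essentially the statement that APCSF is a special case of the general framework developed in Sections~\ref{sec:theta2}--\ref{sec:longtimebehaviour}, and its proof consists of pointing out the appropriate $(\gamma,\delta)=(0,1)$ specialization and citing the corresponding previously proven results.
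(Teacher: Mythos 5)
Your proposal is correct and follows essentially the same route as the paper: short time existence, verification of~\eqref{eq:h_1} and~\eqref{eq:h_2}, Corollary~\ref{cor:embeddedness} for embeddedness, Corollary~\ref{cor:T=infty} for immortality, and then Theorem~\ref{thm:main1}. The only cosmetic difference is that you re-derive the length bounds directly from area preservation and the isoperimetric inequality, whereas the paper simply cites Lemma~\ref{lem:AL} (whose proof in the case $\gamma=0$ is exactly your computation).
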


\begin{proof}
By the short time existence, there exists a unique solution $X\in C^\infty(\Sp\!\times[0,T))$ 
By Lemma~\ref{lem:AL}, $c\leq L\leq C$ so that $h$ is uniformly bounded from above and below away from zero.
By Corollary~\ref{cor:embeddedness} the curves remain embedded on $(0,T)$.
Corollary~\ref{cor:T=infty} yields that $T=\infty$.
Hence, we can apply Theorem~\ref{thm:main1}.
\end{proof}

\begin{Rem}\label{rem:main2}
Theorem~\ref{thm:main2} also holds for $h$ satisfying~\eqref{eq:dtAdtLb}.
\end{Rem}

\providecommand{\bysame}{\leavevmode\hbox to3em{\hrulefill}\thinspace}
\providecommand{\MR}{\relax\ifhmode\unskip\space\fi MR }
\providecommand{\MRhref}[2]{%
  \href{http://www.ams.org/mathscinet-getitem?mr=#1}{#2}
}
\providecommand{\href}[2]{#2}

\bibliographystyle{amsplain} 
\end{document}